\newtheorem{satz}{Theorem}
\newtheorem{epr}[satz]{Proposition}
\newtheorem{defi}[satz]{Definition}
\newtheorem{elem}[satz]{Lemma}
\newtheorem{ecor}[satz]{Corollary}
\newtheorem{rem}[satz]{Remark}
\begin{document}

\pagestyle{empty}

\title[Hall-Littlewood polynomials] {Generalization of the Macdonald formula for Hall-Littlewood polynomials} 

\author{Inka Klostermann}

\thanks{This research was supported by the DFG Priority program Darstellungstheorie 1388.}

\address{Inka Klostermann:\newline Mathematisches Institut, Universit\"at zu K\"oln,\newline Weyertal 86-90, D-50931 K\"oln,Germany}

\email{ikloster@math.uni-koeln.de}

\begin{abstract}
We study the Gaussent-Littelmann formula for Hall-Littlewood polynomials and we develop combinatorial tools to describe the formula in a purely combinatorial way for type $A_n$, $B_n$ and $C_n$. This description is in terms of Young tableaux and arises from identifying one-skeleton galleries that appear in the Gaussent-Littelmann formula with Young tableaux. Furthermore, we show by using these tools that the Gaussent-Littelmann formula and the well-known Macdonald formula for Hall-Littlewood polynomials for type $A_n$ are the same.
\end{abstract}

\maketitle

\section{Introduction}
The symmetric Hall-Littlewood polynomials $P_{\lambda}(x,q)$ have an intrinsic meaning in combinatorial representation theory generalizing other important families of symmetric functions i.e. the monomial symmetric functions and the Schur functions. Originally P. Hall defined the Hall-Littlewood polynomials for type $A_n$ as a family of symmetric functions associated to certain elements in the Hall algebra. Later, Littlewood defined them explicitly in terms of the Weyl group $W$ and a coweight lattice $X \check{}$ for type $A_n$ \cite{Li}. This formula led to defining Hall-Littlewood polynomials of arbitrary type by replacing $W$ and $X\check{}$ in Littlewood's definition by a Weyl group and a coweight lattice of arbitrary type. These polynomials coincide with the so-called Macdonald spherical functions \cite{Mac2}, thus both names appear in the literature denoting the same objects.\\
There are various explicit combinatorial formulas for the Hall-Littlewood polynomials proven by Gaussent-Littelmann,  Macdonald, Lenart, Schwer, Haiman-Haglund-Loehr \cite{GL1} \cite{Mac1},\cite{L1}, \cite{L2}, \cite{S}, \cite{HHL} to name only a few. The first and probably most famous combinatorial formula, the Macdonald formula, is exclusively for type $A_n$. This formula is in terms of Young tableaux of type $A_n$. Most recently, Gaussent-Littelmann developed a formula for Hall-Littlewood polynomials for arbitrary type as sum over positively folded one-skeleton galleries in the standard apartment of the affine building. This formula has a geometric background which relates it closely to the Schwer formula which is a sum over positively folded alcove galleries in the standard apartment of the affine building. Let us explain the geometric background and their connection more precisely:\\
Express a given Hall-Littlewood polynomial $P_{\lambda}(x,q)$ of arbitrary type in the monomial basis $\{{m_{\mu}}(x)\}_{\mu \in X \check{}_+}$:
$$ P_{\lambda}(x,q)=\sum_{\mu \in X\check{}_+}q^{-\left\langle \lambda+\mu, \rho \right \rangle}L_{\lambda,\mu}(q)m_{\mu}(x)$$
with $L_{\lambda, \mu}(q) \in \mathbb{Z}[q]$.\\
The Satake isomorphism yields that the Laurent polynomial $L_{\lambda, \mu}(q)$ can be calculated by counting points in a certain intersection of orbits in an affine Grassmannian depending on the coweights $\lambda$ and $\mu$ over a finite field $\mathbb{F}_q$. Both, Schwer and Gaussent-Littelmann use this approach by describing the elements in this intersection with galleries in the standard apartment of the affine building, namely Gaussent-Littelmann use one-skeleton galleries whereas Schwer uses alcove galleries. In geometric terms using different types of galleries results from choosing different Bott-Samelson type varieties. Gaussent-Littelmann refer to this connection between the formulas as "geometric compression". One major advantage of using one-skeleton galleries instead of alcove galleries is that there is a one-to-one correspondence between the positively folded one-skeleton galleries of type $\lambda$ and target $\mu$ for some dominant coweights $\lambda$ and $\mu$ and the semistandard Young tableaux of shape $\lambda$ and content $\mu$, for classical types. This correspondence leads to the question whether it is possible to calculate the contribution to the Gaussent-Littelmann formula of a positively folded one-skeleton gallery $\delta$ directly from the associated semistandard Young tableau $T_{\delta}$. In the first part of this article we give a positive answer to this question by developing the so-called combinatorial Gaussent-Littelmann formula. The key ingredient for the proof of this formula is a recurrence for a certain set of positively folded galleries of chambers in the standard apartment of the residue building that appears in the Gaussent-Littelmann formula. \\ 
It turns out that the Macdonald formula and the combinatorial Gaussent-Littelmann formula coincide for type $A_n$. In fact, the Macdonald formula is a closed formula for the recursively defined combinatorial Gaussent-Littelmann formula. The aim of the second part of this article is to explain and prove this equality. Apparently, the first indicator for the equality of the two formulas is that they are both sums over semistandard Young tableaux. Further, in the combinatorial Gaussent-Littelmann formula the contribution of a semistandard Young tableau is a product of contributions of the columns of the tableau. These contributions only depend on the column itself and, if existing, on the column to the right. Reformulating the Macdonald formula reveals this property in the formula, too, although it is not at all obvious at first glance. We prove the equality of the two formulas by showing that the contribution of every column is the same.\\
Since the Macdonald formula is valid only for type $A_n$ the formula of Gaussent-Littelmann generalizes the Macdonald formula and provides it with a geometric background.\\
This article is organized as follows:\\
In the second section we start by revisiting all basics regarding building theory that we need in order to compute the Gaussent-Littelmann formula. In the third section we state this formula. The combinatorial version of it for type $A_n$, $B_n$ and $C_n$ is developed in the fourth section including a detailed description of the respective Young tableaux. In the fifth section we present the Macdonald formula and the sixth section is devoted to proving the equality of both formulas for type $A_n$.
\section{Basics}
In this section we recall some basic notation, definitions and facts. Let $(X, \phi, X{\check{}}, \phi{\check{}})$ be a reduced root datum where $\left\langle.,.\right\rangle$ denotes the non-degenerate pairing between $X$ and $X\check{}$. Let $W$ be the Weyl group and $l(\cdot)$ denote the length function on $W$. Further, let $\Delta= \{\alpha_1, \dots, \alpha_n\}$ be a fixed choice of simple roots and $\phi^+$ the set of positive roots with respect to $\Delta$, $\rho$ is, as usual, half the sum of all positive roots. Let 
$$ X{\check{}}_+=\{ \lambda \in X\check{} \; \vert \; \left\langle \lambda, \alpha \right\rangle \geq 0 \; \text{for all} \; \alpha \in \phi^+  \}$$ be the set of dominant coweights.
\subsection{Hall-Littlewood polynomials}
Consider the group algebra $R[X\check{}]$ with coefficients in some ring $R$. Let $\{ \epsilon_1,\dots,\epsilon_n \}$ be a $\mathbb{Z}$-basis of unit vectors for $X{\check{}}$. By sending $\epsilon_i$ to $x_i$ for every $i$ we can identify this group algebra with the polynomial ring in $n$ variables over the ring $R$. In the following we identify a given coweight $\mu= \mu_1\epsilon_1+\dots+\mu_n\epsilon_n\in X{\check{}}$ with the monomial $x^{\mu}:= x_1^{\mu_1}*\dots*x_n^{\mu_n}$. The Weyl group $W$ acts naturally on this algebra thus we can consider $R[X \check{}]^W$ which is the algebra of invariants under this action. This algebra is also known as the algebra of symmetric polynomials due to the above identification. There are several classical bases known for the algebra of symmetric polynomials. The one we need is the monomial basis $\{m_{\lambda}(x)\}_{\lambda \in X\check{}_+}$ consisting of the monomial symmetric functions $m_{\lambda}(x)$ for $\lambda \in X\check{}_+$. They are defined as follows:
$$ m_{\lambda}(x):= \sum_{w \in W/W_{\lambda}}x^{w(\lambda)}, $$
where $W_{\lambda}$ is the stabilizer of $\lambda$ in $W$.\\
Now let $R= \mathbb{Z}[q,q^{-1}]$ be the ring of Laurent polynomials with coefficients in $\mathbb{Z}$. The Hall-Littlewood polynomials $\{P_{\lambda}(x,q)\}_{\lambda \in X\check{}_+}$ form a basis for $\mathbb{Z}[q,q^{-1}][X]^{W}$. They are defined as follows:
$$ P_{\lambda}(x,q)= \frac{1}{W_{\lambda}(q^{-1})}\sum_{w \in W}w(x^{\lambda}\prod_{\alpha \in \phi^+}\frac{1-q^{-1}x^{-\alpha\check{}}}{1-x^{-\alpha\check{}}}),$$
where $W_{\lambda}(q^{-1})= \sum_{w \in W_{\lambda}}q^{-l(w)}$.\\
Expanding the Hall-Littlewood polynomials $\{P_{\lambda}(x,q)\}_{\lambda \in X\check{}_+}$ in terms of the monomial basis $\{m_{\mu}(x)\}_{\mu \in X\check{}_+}$ leads to Laurent polynomials $L_{\lambda, \mu}(q)$:
$$ P_{\lambda}(x,q)= \sum_{\mu \in X\check{}_+}q^{-{\left \langle \lambda+\mu, \rho \right \rangle}}L_{\lambda,\mu}(q)m_{\mu}(x). $$
The Gaussent-Littelmann formula describes exactly these Laurent polynomials $L_{\lambda, \mu}(q)$.
\subsection{Buildings}
In this section we recall that part of the theory of buildings which is essential for understanding the combinatorics in this article (see \cite{GL1}). For a detailed introduction to buildings we refer to Ronan's book \cite{R}.\\ \\
The Gaussent-Littelmann formula for Hall-Littlewood polynomials is in terms of combinatorial one-skeleton galleries. These objects are contained in a fixed standard apartment of the affine building.\\ \\
Let $\mathbb{A}:= X\check{}\otimes_{\mathbb{Z}}\mathbb{R}$ be the real span of the coweight lattice. For every pair $(\alpha,n)$ with $\alpha \in \phi^+$ and $n \in \mathbb{Z}$ we define the affine hyperplane 
$$H_{(\alpha,n)}:= \{ x \in \mathbb{A} \; \vert \; \left\langle x, \alpha \right\rangle = n \}.$$
Let $H^a=\{ H_{(\alpha,n)} \; \vert \; \alpha \in \phi^+\; , \; n \in \mathbb{Z}\}$ be the set of all affine hyperplanes.\\
The standard \textbf{apartment of the affine building} associated to the given root datum is the vector space $\mathbb{A}$ together with the hyperplane arrangement $H^a$.\\ \\Recall that the Weyl group $W$ is the subgroup of $GL(\mathbb{A})$ generated by all reflections $$s_{\alpha}(x)=x - \left\langle \alpha, x \right \rangle \alpha\check{}$$ at the hyperplanes $H_{(\alpha,0)}$ for $\alpha \in \phi$ and $x \in \mathbb{A}$. The reflections $s_i=s_{\alpha_i}$ for $i \in \{1, \dots, n\}$ at the hyperplanes $H_{(\alpha_i,0)}$ are called simple reflections. The set of simple reflections already generates $W$. We define the affine Weyl group $W^a$ to be the subgroup of the affine transformations of $\mathbb{A}$ generated by all reflections at the hyperplanes $H^a$. We denote the reflection at the hyperplane $H_{(\alpha,n)}$ by $s_{(\alpha,n)}$.\\ \\Let $$H_{(\alpha,n)}^+=\{ x \in \mathbb{A} \; \vert  \; \left \langle \alpha, x \right \rangle \geq n \}$$ be the positive closed half-space and $$H_{(\alpha,n)}^-=\{ x \in \mathbb{A} \; \vert  \; \left \langle \alpha, x \right \rangle \leq n \}$$ be the negative closed half-space corresponding to $(\alpha,n)$.\\
A \textbf{face} $F$ in $\mathbb{A}$ is a subset of $\mathbb{A}$ of the following form:
$$ F =  \bigcap_{(\alpha, n) \in \phi^+\times\mathbb{Z}} H^{e_{(\alpha,n)}}_{(\alpha,n)},$$
where $e_{(\alpha,n)} \in \{+,-,\emptyset\}$ and $H^{\emptyset}_{(\alpha,n)}=H_{(\alpha,n)}$. By the corresponding {\bf open face} $F^{°}$ we mean the subset of $F$ obtained by replacing the closed affine half-spaces in the definition of $F$ by the corresponding open affine half-spaces. We call the affine span of a face $F$ the \textbf{support of $F$}, the \textbf{dimension of a face $F$} is the dimension of its support. We call a face of dimension zero a \textbf{vertex} and a face of dimension one an \textbf{edge}. The faces given by non-empty intersections of half-spaces are the faces of maximal dimension and are called \textbf{alcoves}. For a vertex $V$ let $\phi_V$ denote the subrootsystem of $\phi$ consisting of all roots $\alpha$ such that $H_{(\alpha,n)}$ contains $V$ for some $n\in \mathbb{Z}$. We call a vertex $V$ with $\phi_V=\phi$ a \textbf{special vertex}. The special vertices are precisely the coweights $X\check{}$. We denote the subgroup of $W$ consisting of all reflections $s_{\alpha}$ such that $\alpha \in \phi_{V}$ by $W_V^v$. For an arbitrary face $F$ in $\mathbb{A}$ let $W_F$ be the stabilizer of $F$ in $W$ and $W_F^a$ the stabilizer of $F$ in $W^a$. Note that $W^a_V$ and $W_V^v$ are isomorphic. An important face, the \textbf{fundamental alcove} $\Delta_f$ is defined as follows:
$$\Delta_f = \{ x \in \mathbb{A} \; \vert \; 0 \leq \left\langle \alpha, x \right\rangle \leq 1 \; \forall \alpha \in \phi^+ \}.$$
The fundamental alcove $\Delta_f$ is a fundamental domain for the action of $W^a$ on $\mathbb{A}$.\\ \\
The \textbf{type of a face} $F$ is defined as follows: Let $S^a$ be the subset of $(\phi^+ \times \mathbb{Z})$ with $\Delta_f \cap H_{(\alpha, n)}$ is a face of codimension one for $(\alpha,n)\in (\phi^+ \times \mathbb{Z})$. Let $F$ be a face of $\mathbb{A}$ contained in $\Delta_f$. The type of $F$ is defined as
$$ t(F)= \{(\alpha, n) \in S^a \; \vert \; F \subset H_{(\alpha,n)}\}.$$
Let now $F$ be an arbitrary face of $\mathbb{A}$. Then there is a unique face $F_f$ of $\mathbb{A}$ contained in $\Delta_f$ with the property that there exists an element $w \in W^a$ such that $w(F_f)=F$. We define the type of $F$ to be $t(F)=t(F_f)$.\\ \\
Let $\Omega$ be a subset of $\mathbb{A}$. We say that a hyperplane $H \in H^a$ \textbf{separates} $\Omega$ and a face $F$ of $\mathbb{A}$ if $\Omega$ is contained in $H^+$ or $H^-$ and $F^{°}$ is contained in the opposite open half space.\\ \\
The closures of the irreducible components of $\mathbb{A}\backslash \bigcup_{\alpha \in \phi}H_{(\alpha,0)}$ are called \textbf{chambers}. The chamber that contains $\Delta_F$ is called the \textbf{dominant chamber} and is denoted by $C^+$. This chamber is a fundamental domain for the action of $W$ on $\mathbb{A}$. By applying the longest Weyl group element $w_0 \in W$ to $C^+$ we obtain the so-called \textbf{anti-dominant chamber} $C^-$. This chamber is the unique chamber of $\mathbb{A}$ such that every hyperplane $H_{(\alpha_i,0)}$ separates $\alpha_i\check{}$ from $C^-$ for all simple roots $\alpha_i$. A \textbf{sector} $S$ at the vertex $V$ in $\mathbb{A}$ is a chamber $C$ translated by the vertex $V$ and the sector $-S$ at $V$ is the chamber $-C=\{-x  \; \vert \; x \in C\}$ translated by $V$. We can define an equivalence relation on the set of sectors as follows: Any two sectors are in the same equivalence class if their intersection is again a sector. We denote the equivalence class of a sector $S$ by $\overline{S}$. The set of all equivalence classes is in bijection with the Weyl group $W$ via the map that sends a sector $S$ to $w \in W$ with $\overline{w(C^+)}=\overline{S}$.\\ \\
The last object we need to introduce is the \textbf{ standard apartment of the residue building} at a vertex $V$ in $\mathbb{A}$. The standard apartment of the residue building at $V$ is the vector space $\mathbb{A}$ together with the subset of $H^a$ consisting of the affine hyperplanes that contain $V$. We refer to the standard apartment of the residue building as $\mathbb{A}_V$. The set of faces of $\mathbb{A}_V$ consists of all faces $F$ of $\mathbb{A}$ that contain $V$. We denote the corresponding face in $\mathbb{A}$ by $F_V$. Let $S$ be a sector at $V$ in $\mathbb{A}$. We associate to $S$ a face $S_V$ of $\mathbb{A}_V$ as follows: Let $\Delta \in \mathbb{A}$ be the unique alcove in the apartment of the affine building with $S^{°}\cap \Delta^{°} \neq \emptyset$. Then $S_V$ is defined to be $\Delta_V$.\\ \\
We can identify the faces of $\mathbb{A}_V$ as follows: Consider the set $R$ of objects in $\mathbb{A}$ of the form
$$  \underset{H_{(\alpha,n)}\ni V}{\bigcap_{(\alpha, n) \in \phi^+\times\mathbb{Z}}}H^{e_{(\alpha,n)}}_{(\alpha,n)},$$
where $e_{(\alpha,n)} \in \{+,-,\emptyset\}$ and $H^{\emptyset}_{(\alpha,n)}=H_{(\alpha,n)}$. We identify every face $F$ of $\mathbb{A}_V$ with the object in $R$ that contains $F$ and is the smallest with this property. Because of this identification we call a face $F_V$ in the standard apartment of the residue building at $V$ coming from an alcove $F$ in the apartment of the affine building a chamber.\\
Consider the sector $C^-+V$, the translated anti-dominant chamber in $\mathbb{A}$. We denote the corresponding chamber in the standard apartment of the residue building at $V$ by $C_V^-$. The chamber $C_V^-$ is a fundamental domain for the action of the subgroup $W_V^a$ of $W^a$ on $\mathbb{A}_V$. Let $\{ \beta_1, \dots, \beta_m\}$ be the set of simple roots for $\phi_V$ such that for every $(\beta_i,n_i)$ with $H_{(\beta_i,n_i)}$ contains $V$ the hyperplane $H_{(\beta_i,n_i)}$ separates $\beta_i\check{}+V$ from $C_V^-$. The reflections $s_{(\beta_i,n_i)}$ for $i \in \{1, \dots, m\}$ are the simple reflections of the Weyl group $W_V^a$. Since there will be no room for confusion we also denote the simple reflections of $W_V^a$ by $s_i=s_{(\beta_i,n_i)}$ for $i \in \{1, \dots, m\}$. With respect to this set of simple reflections $C_V^-$ is the anti-dominant chamber of $\mathbb{A}_V$. Let $F$ be a face of $\mathbb{A}_V$ in $C_V^-$. The\textbf{ type of $F$} is defined as $t(F)= \{ i \; \vert \; F \in H_{(\beta_i,n_i)}\}.$  Now let $F$ be an arbitrary face of $\mathbb{A}_V$, then there exists a unique face $F_f \in C_V^-$ and a $w \in W_V^a$ such that $F= w(F_f)$. We define the type of $F$ to be $t(F)=t(F_f)$. Note that we start by defining the type of the faces of the anti-dominant chamber $C_V^-$ and not of the dominant as in the definition of the type of a face in the standard apartment of the affine building.

\subsection{One-skeleton galleries}
In general, a gallery is a sequence of faces in the standard apartment of a building where a face is contained or contains the subsequent face.
\begin{defi}
A one-skeleton gallery in $\mathbb{A}$ is a sequence $\delta = ( V_0 \subset E_0 \supset V_1 \subset \cdots \supset V_{r+1})$ where
\begin{itemize}
     \item[(i)] $V_i$ for $i \in \{0, \dots, r+1\}$ is a vertex in $\mathbb{A}$ and
     \item[(ii)]$E_i$ for $i \in \{0, \dots, r\}$ is an edge in $\mathbb{A}$.
\end{itemize}
\end{defi}
We can concatenate two one-skeleton galleries $\delta = ( V_0 \subset E_0 \supset V_1 \subset \cdots \supset V_{r+1})$  and $\delta' = ( V_0' \subset E_0' \supset V_1' \subset \cdots \supset V_{r+1}')$ if $V_{r+1}=V_0'$ in $\mathbb{A}$ by 
$$\delta*\delta'= ( V_0 \subset E_0 \supset V_1 \subset \cdots \supset V_{r+1}=V_0' \subset E_0' \supset V_1' \subset \cdots \supset V_{r+1}').$$
We also use this notation if the last vertex of the first gallery does not coincide with the first vertex of the second one. In this situation $\delta*\delta'$ means the concatenation of $\delta$ with the translated gallery $\delta'+(V_{r+1}-V_0)$.\\
The formula of Gaussent-Littelmann is in terms of \textit{combinatorial} one-skeleton galleries.
\begin{defi}
A combinatorial one-skeleton gallery in $\mathbb{A}$ is a one-skeleton gallery $\delta = ( V_0 \subset E_0 \supset V_1 \subset \cdots \supset V_{r+1})$ where $V_0$ and $V_{r+1}$ are special vertices.
\end{defi}
Let $\omega$ be a fundamental coweight. We define a combinatorial one-skeleton gallery $\delta_{\omega}=(\mathfrak{o} \subset E_0 \supset \dots \supset \omega)$ associated to $\omega$ as follows:\\
Consider $\mathbb{R}_{\geq 0}\omega$ the real span of $\omega$ in $\mathbb{A}$. Let $E_0$ be the unique edge in the intersection of the fundamental alcove $\Delta_f$ with $\mathbb{R}_{\geq 0}\omega$. Let $V_1$ be the vertex contained in $E_0$ different from $V_0$. If $V_1$ is $\omega$ (as it is if and only if $\omega$ is minuscule) then $\delta_{\omega}=(\mathfrak{o}\subset E_0 \supset \omega)$. Otherwise the subsequent edge $E_1$ is the unique edge in $\mathbb{R}_{\geq 0}\omega$ that contains $V_1$ and is different from $E_0$ and so on until the vertex is $\omega$. We call these one-skeleton galleries fundamental although they do not have to be contained in the fundamental alcove and we call $E_{\omega}:=E_0$ the fundamental edge and $V_{\omega}:=V_1$ the fundamental vertex in $\omega$-direction. Unless $\omega$ is not minuscule $V_{\omega}$ does not coincide with the vertex $\omega$. Further, we define the minuscule one-skeleton gallery $\delta_{E_{\omega}}=(\mathfrak{o}\subset E_{\omega} \supset V_{\omega})$ and its Weyl group conjugates $\delta_{w(E_{\omega})}=w(\delta_{E_{\omega}})=(\mathfrak{o}\subset w(E_{\omega}) \supset w(V_{\omega}))$ for $w\in W$. Every edge in the apartment $\mathbb{A}$ is a (displaced) Weyl group conjugate of a fundamental edge. Thus, every one-skeleton gallery $\delta$ is a concatenation of Weyl group conjugates of minuscule one-skeleton galleries i.e. $\delta= \delta_{w_0(E_{\omega_{i_0}})}*\cdots*\delta_{w_r(E_{\omega_{i_r}})}$ where $\omega_{i_j}$ are fundamental coweights and $w_i \in W$ for every $i$. We refer to this presentation of a one-skeleton gallery as its \textbf{minuscule presentation}.\\
We now define a combinatorial one-skeleton gallery associated to a dominant coweight: Let $\omega_1, \dots, \omega_n$ be an enumeration of the fundamental coweights and let $\lambda= \sum_{i}\lambda_i\omega_i \in X\check{}_+$ be a dominant coweight. The associated one-skeleton gallery $\delta_{\lambda}$ is defined as follows:
$$ \delta_{\lambda}= \underbrace{\delta_{\omega_1}*\dots *\delta_{\omega_1}}_{\lambda_1 \; \text{times}}*\dots*\underbrace{\delta_{\omega_n}*\dots*\delta_{\omega_n}}_{\lambda_n \; \text{times}}.$$  
\begin{defi}
The type of a one-skeleton gallery $\delta = (V_0 \subset E_0 \supset \dots \subset E_r \subset V_{r+1})$ in $\mathbb{A}$ is defined as
$$ t(\delta)=(S^a(V_0) \subset  S^a(E_0) \supset \dots \supset  S^a(V_{r+1})).$$
We say a combinatorial one-skeleton gallery $\delta$ in $\mathbb{A}$ has type $\lambda$ for some $\lambda \in X\check{}_+$ if $t(\delta)=t(\delta_{\lambda})$. 
\end{defi}
Note that there exist combinatorial one-skeleton galleries such that the type is not a dominant coweight $\lambda$. For example for the combinatorial one-skeleton gallery $\delta=(\mathfrak{o} \subset E_0 \supset \frac{1}{2}\omega_1 \subset E_1 \supset \omega_2)$ with $E_0= \{ t \omega_1 \; \vert  \; t \in \left[ 0, \frac{1}{2}\right]\}$ and $E_1=\{ \frac{1}{2}\omega_1+ t(\omega_2-\frac{1}{2}\omega_1) \; \vert \; t \in \left[0,1\right]\}$ in the standard apartment of the affine building of type $B_2$ there does not exist a dominant coweight $\lambda$ such that $\lambda$ is the type of $\delta$:
\begin{center}
   
   \begin{tikzpicture}[x=0.4cm,y=0.4cm]
   
    \draw (0,0)-- (0:2cm);   
   \draw (0,0)-- (45:2.83cm); 
   \draw (0,0)-- (90:2cm);
   \draw (0,0)-- (135:2.83cm);
   \draw (0,0)-- (180:2cm);
   \draw (0,0)-- (225:2.83cm);
   \draw (0,0)-- (270:2cm);
   \draw (0,0)-- (315:2.83cm);
   \draw (0,2cm)-- ++(315:2.83cm)-- ++(225:2.83cm)-- ++(135:2.83cm)-- ++(45:2.83cm);
   \draw (-1cm,2cm)-- ++(270:4cm);
   \draw (-2cm,-1cm)-- ++(0:4cm);
   \draw (1cm,2cm)-- ++(270:4cm);
   \draw (-2cm,1cm)-- ++(0:4cm);
   \draw[dotted] (-2cm,2cm)-- ++(135:0.5cm);
   \draw[dotted] (-1cm,2cm)-- ++(90:0.5cm);
   \draw[dotted] (0,2cm)-- ++(135:0.5cm);
   \draw[dotted] (0,2cm)-- ++(45:0.5cm);
   \draw[dotted] (0,2cm)-- ++(90:0.5cm);
   \draw[dotted] (1cm,2cm)-- ++(90:0.5cm);
   \draw[dotted] (2cm,2cm)-- ++(45:0.5cm);
   \draw[dotted] (2cm,1cm)-- ++(0:0.5cm);
   \draw[dotted] (2cm,0)-- ++(45:0.5cm);
   \draw[dotted] (2cm,0)-- ++(0:0.5cm);
   \draw[dotted] (2cm,0)-- ++(315:0.5cm);
   \draw[dotted] (2cm,-1cm)-- ++(0:0.5cm);
   \draw[dotted] (2cm,-2cm)-- ++(315:0.5cm);
   \draw[dotted] (1cm,-2cm)-- ++(270:0.5cm);
   \draw[dotted] (0,-2cm)-- ++(315:0.5cm);
   \draw[dotted] (0,-2cm)-- ++(270:0.5cm);
   \draw[dotted] (0,-2cm)-- ++(225:0.5cm);
   \draw[dotted] (-1cm,-2cm)-- ++(270:0.5cm);
   \draw[dotted] (-2cm,-2cm)-- ++(225:0.5cm);
   \draw[dotted] (-2cm,-1cm)-- ++(180:0.5cm);
   \draw[dotted] (-2cm,0)-- ++(135:0.5cm);
   \draw[dotted] (-2cm,0)-- ++(180:0.5cm);
   \draw[dotted] (-2cm,0)-- ++(225:0.5cm);
   \draw[dotted] (-2cm,1cm)-- ++(180:0.5cm);

   \draw[->,red] (0,0)-- ++(90:1cm)--++(0:1cm);
   \coordinate (o) at (0,0);
   \coordinate (-o_1) at (0,1cm);
   \fill[red](o) circle (1pt);
   \fill[red](-o_1) circle (1pt);
   
   \draw[left] (0,0) node {\tiny{$\mathfrak{o}$}};
   \draw[right] (0,2cm) node {\tiny{$\omega_1=\epsilon_1$}};
   \draw[above] (2cm,0) node {\tiny{$\epsilon_2$}};
   \draw[right] (1cm,1cm) node {\tiny{$\omega_2=\frac{1}{2}(\epsilon_1+\epsilon_2)$}};
   \fill (0,2cm) circle (1pt);
   \fill (2cm,0) circle (1pt);
   
   \end{tikzpicture}
   
\end{center}   
In order to define some properties of combinatorial one-skeleton galleries we need to introduce 2-step one-skeleton galleries:
\begin{defi}
A 2-step one-skeleton gallery $\delta$ in $\mathbb{A}$ is a one-skeleton gallery in $\mathbb{A}$ of the following form
$$ \delta  =(V_0\subset E \supset V \supset F \subset V_2).$$
We omit the vertices $V_0$ and $V_2$ in the following. 
\end{defi}
Note that the 2-step one-skeleton galleries do not need to be combinatorial one-skeleton galleries.
\begin{defi}
Let $(E \supset V \subset F)$ be a 2-step one-skeleton gallery in $\mathbb{A}$. We call $(E \supset V \subset F)$ minimal if there exists a sector $S$ at $V$ in $\mathbb{A}$ such that $F \in S$ and $E \in -S$.\\
A one-skeleton gallery $ \delta = ( V_0 \subset E_0 \supset V_1 \subset \dots \supset V_{r+1})$ in $\mathbb{A}$ is called locally minimal if the 2-step one-skeleton gallery $\delta_i= (E_{i-1} \supset V_i \subset E_{i})$ is minimal for $i \in \{1, \dots, r \}$.\\
We call $\delta$ (globally) minimal if there exists an equivalence class of sectors $\bar{S}$ such that there exists a sector $S_i \in \bar{S}$ with $E_{i-1} \in S_i$ and $E_i \in -S_i$ for $i \in \{1, \dots, r\}$.
\end{defi}
Clearly, global minimality implies local minimality.\\ \\
We can associate to a given 2-step one-skeleton gallery $(E \supset V \subset F)$ in $\mathbb{A}$ a pair of faces $(E_V,F_V)$ in $\mathbb{A}_V$ and vice versa. We call the pair $(E_V,F_V)$ a minimal pair if and only if the associated 2-step one-skeleton gallery $(E \supset V \subset F)$ is minimal in $\mathbb{A}$.
\begin{defi}
We say we obtain $(E \supset V \subset F)$ by a positive folding from $(E \supset V \subset F')$ if there exists $(\alpha,n)$ in $(\phi^+ \times \mathbb{Z})$ with $s_{(\alpha, n)}(F')=F$, $V \in H_{(\alpha, n)}$ and $H_{(\alpha,n)}$ separates $C_V^-$ and $F'$ from $F$.\\ \\
A 2-step one-skeleton gallery $(E \supset V \subset F)$ in $\mathbb{A}$  is called positively folded if it is minimal or if there exist faces $F_0, \dots, F_l$ in $\mathbb{A}$ such that
\begin{itemize}
  \item $F=F_l$,
  \item $(E \supset V \subset F_0)$ is minimal and
  \item $(E \supset V \subset F_i)$ is obtained from $(E \supset V \subset F_{i-1})$ by a positive folding for every $i \in \{1, \dots ,l\}$.
\end{itemize}
A one-skeleton gallery $\delta=(V_0 \subset E_0 \supset \dots \supset V_{r+1})$ in $\mathbb{A}$ is called locally positively folded if
\begin{itemize}
   \item $(E_{i-1}\supset V_i \subset E_i)$ is positively folded for every $i \in \{1, \dots, r\}$.
\end{itemize}
\end{defi}
As already mentioned it holds that the set of equivalence classes of sectors in $\mathbb{A}$ is in bijection with the Weyl group $W$. Consequentley we can carry the Bruhat order from  $W$ over to the equivalence classes: Let $\overline{S}$ and $\overline{S'}$ be two equivalence classes of sectors in $\mathbb{A}$.  Then $\overline{S}\geq \overline{S'}$ if and only if $w_1 \geq w_2$ with $\overline{w_1(C^+)}=\overline{S}$ and $\overline{w_2(C^+)}= \overline{S'}$.
\begin{defi}
A one-skeleton gallery $\delta=(V_0 \subset E_0 \supset \dots \supset V_{r+1})$ in $\mathbb{A}$ is called (globally) positively folded if
\begin{itemize}
   \item[(i)] $\delta$ is locally positively folded and
   \item[(ii)] there exists a sequence of sectors $S_0, \dots, S_r$ such that $S_i$ is a sector at $V_i$ and contains $E_i$ and $\overline{S_0}\geq \dots \geq \overline{S_r}$ for every $i \in \{1, \dots, r\}$.
\end{itemize}
\end{defi}
In \cite{GL1} Gaussent and Littelmann show under which conditions locally\\ positively folded implies positively folded and local minimality implies minimality. In this article we concentrate on the root systems for type $A_n$, $B_n$ and $C_n$. In these cases the theorem provides the following:
\begin{satz}\label{globloc}
Let $\lambda$ be a dominant coweight and let $\omega_1, \dots, \omega_n$ be the Bourbaki enumeration of the fundamental coweights. Let $\delta$ be a combinatorial one-skeleton gallery in $\mathbb{A}$ of type $\lambda$. If $\delta$ is locally positively folded (resp. locally minimal) then it is positively folded (resp. minimal). 
\end{satz}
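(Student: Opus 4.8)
The plan is to prove both implications by induction on the number of joints of $\delta$, propagating a suitable family of sectors along the gallery and exploiting the minuscule presentation $\delta = \delta_{u_0(E_{\omega_{i_0}})}*\cdots*\delta_{u_r(E_{\omega_{i_r}})}$ (with $u_0,\dots,u_r\in W$) together with the hypothesis that $\delta$ has dominant type $\lambda$. Since $t(\delta)=t(\delta_\lambda)$, the directions of the edges $E_0,\dots,E_r$ are, up to the Weyl twists $u_j$, exactly the fundamental-coweight directions prescribed by $\delta_\lambda$; in particular they occur in the Bourbaki order $\omega_1,\dots,\omega_n$, each with its multiplicity. This ordering is the only place where the restriction to $A_n$, $B_n$, $C_n$ and to the Bourbaki enumeration enters, and it is precisely what excludes pathologies such as the $B_2$ gallery above whose type is not a dominant coweight.

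First I would treat \emph{locally minimal} $\Rightarrow$ \emph{minimal}. By definition, minimality of $(E_{i-1}\supset V_i\subset E_i)$ means there is a sector $S$ at $V_i$ with $E_i\in S$ and $E_{i-1}\in -S$, equivalently that $((E_{i-1})_{V_i},(E_i)_{V_i})$ is a minimal pair in the residue apartment $\mathbb{A}_{V_i}$. The goal is to produce a single equivalence class $\bar S$ so that at every joint some representative $S_i\in\bar S$ realizes the local minimality. I would fix $\bar S$ from the first joint $V_1$ and then show by induction on $i$ that the same class remains admissible at $V_i$. The inductive step is the geometric core: assuming $\bar S$ works up to $V_{i-1}$, so that the direction from $V_i$ along $E_{i-1}$ lies in the representative of $\bar S$ at $V_i$, one compares the admissible local sectors at $V_i$ and shows that the direction of $E_i$ is forced into $-\bar S$. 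Here the dominant type is essential: it pins the relative position of the consecutive directions of $E_{i-1}$ and $E_i$ inside the cone controlled by $\bar S$, so that the locally minimal configuration is in fact the restriction of a globally minimal one. For the straight gallery $\delta_\lambda$ this reproduces the constant class $\overline{C^-}$.

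For \emph{locally positively folded} $\Rightarrow$ \emph{positively folded} I would build on the minimal case. Local positive foldedness is already assumed, so only condition (ii) of global positive folding remains: a choice of sectors $S_0,\dots,S_r$ with $S_i$ at $V_i$, $E_i\in S_i$, and $\overline{S_0}\geq\cdots\geq\overline{S_r}$. I would start from the sector chain attached to the underlying minimal gallery (constant for $\delta_\lambda$) and track the effect of the positive foldings. By definition each positive folding is a reflection $s_{(\alpha,n)}$ at a hyperplane through the relevant vertex that separates $C_V^-$ and the old position from the new one; since such a reflection moves the corresponding class strictly downward in the Bruhat order on equivalence classes, recording the class of the sector containing each outgoing edge $E_i$ yields a weakly decreasing sequence $\overline{S_0}\geq\cdots\geq\overline{S_r}$. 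Together with local positive foldedness this is exactly condition (ii), so $\delta$ is positively folded.

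The step I expect to be the main obstacle is the inductive geometric core in the minimal case, namely that one fixed class $\bar S$ simultaneously realizes the local minimality at \emph{every} joint. This is where one must genuinely use the structure of $A_n$, $B_n$, $C_n$, and it splits into two types of transition: those between two edges inside a single fundamental block $\delta_{\omega_j}$, and those passing from the block of $\omega_j$ to that of $\omega_{j'}$ with $j'\geq j$. For $A_n$ every fundamental coweight is minuscule, so each block is a single minuscule gallery and only the block transitions occur; this is the cleanest case. For $B_n$ and $C_n$ the non-minuscule fundamental coweights yield blocks $\delta_{\omega_j}$ with internal, non-special vertices, producing additional $2$-step galleries inside a block whose minimality must also be globalized. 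Controlling these internal vertices, and checking that no pair of consecutive Bourbaki directions can force two incompatible local classes (the failure mode of the non-dominant $B_2$ example), is the technical heart and is carried out by a finite case analysis on the explicit root data.
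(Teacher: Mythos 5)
The first thing to note is that the paper does not prove this statement at all: it is quoted verbatim as the specialization to types $A_n$, $B_n$, $C_n$ of a theorem of Gaussent--Littelmann, with the proof deferred entirely to \cite{GL1} (Theorem 6.5\,ff.\ there). So there is no internal proof to measure your attempt against; what can be assessed is whether your sketch would stand on its own, and as written it would not. Your proposal correctly identifies the shape of the problem --- propagate a single equivalence class of sectors along the joints, with the dominance of the type and the Bourbaki ordering $i_0\leq\dots\leq i_r$ as the hypotheses that exclude the $B_2$-type pathology --- and your anchor computations are right (the constant class $\overline{C^-}$ for the minimal gallery $\delta_\lambda$, and the reduction of local minimality to minimal pairs in $\mathbb{A}_{V_i}$). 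But the entire mathematical content of the theorem sits in the step you defer: the ``inductive geometric core'' showing that one fixed class $\bar S$ realizes local minimality at \emph{every} joint is precisely the assertion being proved, and announcing that it ``is carried out by a finite case analysis on the explicit root data'' without exhibiting even one case is not a proof but a restatement of the goal. In particular you never actually use the ordering $i_0\leq\dots\leq i_r$ anywhere concrete, even though you correctly flag it as the only place the classical-type hypothesis enters.

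The folded case has a second, sharper gap. You assert that a positive folding ``moves the corresponding class strictly downward in the Bruhat order on equivalence classes,'' and that recording the class of the sector containing each outgoing edge therefore yields $\overline{S_0}\geq\dots\geq\overline{S_r}$. Neither half is justified. A positive folding at $V$ is a reflection $s_{(\alpha,n)}$ with $\alpha\in\phi_V$, and at the non-special vertices occurring inside the blocks $\delta_{\omega_j}$ for $B_n$ and $C_n$ the group $W_V^a$ is attached to a proper sub-root-system $\phi_V\subsetneq\phi$; a folding there acts on sector classes through an element of $W_{V}^v$, and whether this decreases the class $\overline{w(C^+)}$ in the Bruhat order on $W$ depends on the relative position of $w$ and the reflection --- it is not automatic from the separation condition defining positivity. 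Worse, condition (ii) of global positive foldedness compares sectors based at \emph{different} vertices $V_0,\dots,V_r$, so even granting monotonicity at each single joint you still need an argument that the local choices of $S_i$ (which are not unique when $E_i$ lies in a wall) can be made coherently so that the chain of inequalities concatenates; this is again exactly where dominance of the type and the ordering of the fundamental directions must be used, and it is the reason the statement is a theorem in \cite{GL1} rather than a formal consequence of the definitions. To repair the proposal you would either have to carry out the case analysis on the root data of $A_n$, $B_n$, $C_n$ in full, or do what the paper does and invoke \cite{GL1} directly.
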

\subsection{Galleries of residue chambers}
The next objects we need in order to compute the formula are the galleries of (residue) chambers in $\mathbb{A}_V$ for some vertex $V$ in $\mathbb{A}$. In this section we define what galleries of chambers are and introduce some of their properties.\\ \\
Let $V$ be a vertex in $\mathbb{A}$. 
\begin{defi}
A gallery of chambers in $\mathbb{A}_V$ is a sequence $C=(C_0\supset H_1 \subset C_1\supset  \dots \subset C_{r} )$ where
\begin{itemize}
   \item[(i)] $H_i$ is a face of $\mathbb{A}_V$ coming from a codimension one face in $\mathbb{A}$ for $i \in \{1,\dots,r\}$ and  
   \item[(ii)]$C_i$ is a chamber of $\mathbb{A}_V$ for all $i \in \{0, \dots r\}$.
\end{itemize}
\end{defi}
The type of a codimension one face in a gallery of chambers is only a number in the set $\{1, \dots, m\}$. We define \textbf{the type of a gallery of chambers} $C=(C_0\supset H_1 \subset C_1\supset \dots \subset C_{r} )$ to be the sequence $(t(H_1), \dots, t(H_r))$. Instead of describing a gallery of chambers as sequence of chambers and faces of codimension one, we write only the sequence of chambers and additionally the type of the gallery throughout the article.\\
One crucial definition is the following:
\begin{defi}
Let $\textbf{c}=(C_0, \dots, C_{r})$ be a gallery of chambers in $\mathbb{A}_V$ of type $\textbf{i}=(i_1,\dots, i_r)$.  If $C_j=C_{j+1}$ for some $j$ we call the pair $(C_j, C_{j+1})$ a folding of $\textbf{c}$. Let $S_V$ be a chamber of $\mathbb{A}_V$. We say a folding $(C_j,C_{j+1})$ is positive (resp. negative) with respect to $S_V$ if $H_{(\beta_{i_{j+1}},n_{i_{j+1}})}$ separates $S_V$ from $C_j=C_{j+1}$ (resp. separates $-S_V$ from $C_j=C_{j+1}$). The gallery of chambers  $\textbf{c}$ is said to be positively folded (resp. negatively folded) with respect to $S_V$ if all foldings of $\textbf{c}$ are positive (resp. negative) with respect to $S_V$.\\ \\
Every pair $(C_j,C_{j+1})$ that is not a folding is called a wall-crossing of type $i_{j+1}$ or a wall-crossing of $H_{(\beta_{i_{j+1}},n_{i_{j+1}})}$. We say a wall-crossing $(C_j,C_{j+1})$ of type $i_{j+1}$ is positive (resp. negative) with respect to $S_V$ if $H_{(\beta_{i_{j+1}},n_{i_{j+1}})}$ separates $S_V$ and $C_j$ from $C_{j+1}$ (resp. separates $-S_V$ and $C_j$ from $C_{j+1}$).
\end{defi}
Associated to a gallery of chambers we can define the so-called $\pm$-sequence:
\begin{defi}
Let $\textbf{c}=(C_0, \dots, C_{r})$ be a gallery of chambers in $\mathbb{A}_V$ of type $\textbf{i}=(i_1,\dots, i_r)$ and $S_V$ be a chamber. We call the sequence $ pm(\textbf{c})=(c_1, \dots, c_r)$ with
$$ c_i=\begin{cases}
  +, \text{ if } (C_{i-1},C_i) \text{ is a positive wall-crossing or positive folding}\\
     \hspace{0.5cm} \text{ with respect to $S_V$, }\\
  -, \text{ else}
      \end{cases} $$
the $\pm$-sequence of \textbf{c} with respect to $S_V$.
\end{defi}
\section{Gaussent-Littelmann formula}
In this section we introduce a few more definitions and then finally state the Gaussent-Littelmann formula for Hall-Littlewood polynomials of arbitrary type as in \cite{GL1}.\\ \\
Let $\lambda$ and $\mu$ be dominant coweights and fix an enumeration $\omega_1, \dots, \omega_n$ of the fundamental weights. We define $\Gamma^+(\delta_{\lambda}, \mu)$ to be the set of all positively folded combinatorial one-skeleton galleries $\delta =(\mathfrak{o}=V_0 \subset E_0 \supset \dots \supset V_{r+1})$ in $\mathbb{A}$ with $V_{r+1}=\mu$ and $t(\delta)=t(\delta_{\lambda})$. Now let $\delta=(\mathfrak{o}=V_0 \subset E_0 \supset \dots \supset V_{r+1})$ be in $\Gamma^+(\delta_{\lambda}, \mu)$. For every $j \in \{1, \dots, r  \}$ we define the following:
\begin{itemize}
  \item $D_j$ is the chamber in the standard apartment of the residue building at $V_j$ closest to $C_{V_j}^-$ containing $(E_j)_{V_j}$,
  \item $S^j$ is a sector at $V_j$ containing $E_{j-1}$ and $-S^j$ contains a face $F$ that has the same type as $E_j$,
  \item $s_{{i_j}_1}\cdots s_{{i_j}_{r_j}}$ is a reduced expression for $w\in W_{V_j}^a$ with $w(C_{V_{j}}^-)=D_j$, define $\textbf{i}_j=({i_j}_1,\dots, {i_j}_{r_j} )$.
\end{itemize}
We denote by $\Gamma^+_{S^j_{V_j}}(\textbf{i}_j, op)$ the set of all galleries of chambers $\textbf{c}=(C_{V_j}^-,\\C_1, \dots ,C_{r_j} )$ in $\mathbb{A}_{V_j}$ with type $\textbf{i}_j$ that are positively folded with respect to $S^j_{V_j}$ and have the property that the face $F'_{V_j}$ that is contained in $C_{r_j}$ and has the same type as $(E_j)_{V_j}$ forms a minimal pair with $(E_{j-1})_{V_j}$ in $\mathbb{A}_{V_j}$. Let $w_{D_0}$ be the element of $W$ that sends $C_{V_0}^-$ to $D_0$.\\ \\
In the Gaussent-Littelmann formula we need two statistics for a given gallery of chambers $\textbf{c}=(C_0=C_V^-, C_1, \dots, C_{r_j})$ in $\Gamma^+_{S^j_{V_j}}(\textbf{i}_j, op)$:
 
  $$ r(\textbf{c}) \; \text{ is the number of positive foldings of } \; \textbf{c} \text{ and} $$ 
  $$ t(\textbf{c}) \text{ is the number of positive wall-crossings of } \textbf{c}.$$
   
Now we can finally state the Gaussent-Littelmann formula for the Laurent polynomial $L_{\lambda,\mu}(q)$:
\begin{satz}
$$ L_{\lambda, \mu}(q)= \sum_{\delta \in \Gamma^+(\delta_{\lambda}, \mu)}q^{l(w_{D_0})}(\prod_{j=1}^{r}\sum_{\textbf{c} \in \Gamma^+_{S_{V_j}^j}(\textbf{i}_j, op)}q^{t(\textbf{c})}(q-1)^{r(\textbf{c})}).$$
\end{satz}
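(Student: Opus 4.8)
The plan is to recover this formula as a geometric point count, following the approach of \cite{GL1}, and I expect the proof to proceed in four stages. First I would invoke the Satake isomorphism to reinterpret $L_{\lambda,\mu}(q)$, up to the normalizing power $q^{-\langle\lambda+\mu,\rho\rangle}$ already absorbed in the expansion of $P_\lambda(x,q)$, as the number of $\mathbb{F}_q$-points of the intersection $\mathcal{G}_\lambda \cap S_\mu$ inside the affine Grassmannian $\mathcal{G}=G(\mathcal{K})/G(\mathcal{O})$ over $\mathbb{F}_q$, where $\mathcal{G}_\lambda$ is the $G(\mathcal{O})$-orbit through $t^\lambda$ and $S_\mu$ is the semi-infinite orbit attached to $\mu$. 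This reduces the identity to a counting problem inside the building $\mathcal{I}$ attached to $G(\mathcal{K})$.

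Next I would resolve the orbit closure $\overline{\mathcal{G}_\lambda}$ by a Bott--Samelson type variety $\hat\Sigma$ built from the minuscule presentation $\delta_\lambda=\delta_{\omega_{i_0}}*\cdots*\delta_{\omega_{i_r}}$ of the fundamental gallery; the advantage of using one-skeleton galleries rather than alcove galleries is that each building block is a minuscule partial flag variety in the residue building, which is exactly what will make the vertex-by-vertex factorization visible. Retracting galleries of $\mathcal{I}$ to the standard apartment $\mathbb{A}$ by the retraction centered at the anti-dominant end, I would stratify $\hat\Sigma(\mathbb{F}_q)$ by the combinatorial type of the image gallery; the strata meeting $S_\mu$ are precisely those indexed by the positively folded combinatorial one-skeleton galleries $\delta\in\Gamma^+(\delta_\lambda,\mu)$, which produces the outer sum. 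The global prefactor $q^{l(w_{D_0})}$ arises from the position of the initial chamber $D_0$ at the origin, i.e. from the length of $w_{D_0}$ measuring its distance from $C_{V_0}^-$.

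The heart of the argument is the local point count at each interior vertex $V_j$, and this is where I expect the main obstacle to lie. Having fixed $\delta$, the fiber over its $j$-th step decomposes according to the structure of the residue building $\mathbb{A}_{V_j}$, a spherical building over $\mathbb{F}_q$, and the relevant minuscule factor must itself be resolved by an inner Bott--Samelson variety built from galleries of chambers of type $\mathbf{i}_j$, where $\mathbf{i}_j$ is the chosen reduced word for the $w\in W_{V_j}^a$ with $w(C_{V_j}^-)=D_j$. The strata of this inner resolution are indexed exactly by the galleries of chambers $\mathbf{c}\in\Gamma^+_{S^j_{V_j}}(\mathbf{i}_j, op)$ that are positively folded with respect to $S^j_{V_j}$ and whose terminal face forms a minimal pair with $(E_{j-1})_{V_j}$. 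The delicate step is to show that the $\mathbb{F}_q$-point count of the stratum attached to $\mathbf{c}$ is exactly $q^{t(\mathbf{c})}(q-1)^{r(\mathbf{c})}$: each positive wall-crossing should contribute a free affine parameter ($q$ choices) while each positive folding should contribute a punctured line ($q-1$ choices), and one must check that negative crossings and foldings contribute trivially so that only the stated exponents survive.

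Finally I would assemble the pieces. Because $\hat\Sigma$ is an iterated bundle, the point counts over successive steps multiply, so summing the local contributions over all $\mathbf{c}$ at each $V_j$ and taking the product over $j\in\{1,\dots,r\}$ yields the inner factor $\prod_{j}\sum_{\mathbf{c}}q^{t(\mathbf{c})}(q-1)^{r(\mathbf{c})}$ for the fixed gallery $\delta$; multiplying by $q^{l(w_{D_0})}$ and summing over $\delta\in\Gamma^+(\delta_\lambda,\mu)$ then reproduces the claimed formula. The fact that Theorem~\ref{globloc} upgrades local positive foldedness to global positive foldedness for types $A_n$, $B_n$, $C_n$ is what ensures that these strata genuinely exhaust the relevant $\mathbb{F}_q$-points and that no spurious configurations are counted.
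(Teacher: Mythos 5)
You should know at the outset that this paper contains no proof of this theorem: it is stated verbatim ``as in \cite{GL1}'' and imported as a black box, so there is no in-paper argument to compare yours against. Measured instead against the original proof of Gaussent--Littelmann, your outline reproduces its strategy faithfully: the Satake reduction of $L_{\lambda,\mu}(q)$ to an $\mathbb{F}_q$-point count of an orbit intersection in the affine Grassmannian, the Bott--Samelson-type resolution built step by step from the minuscule presentation of $\delta_\lambda$ (this is precisely the ``geometric compression'' relative to Schwer's alcove-gallery resolution), the retraction centered at the anti-dominant sector class at infinity, the stratification of the fiber over each interior vertex $V_j$ by galleries of chambers in the residue building $\mathbb{A}_{V_j}$, and the local count in which a positive wall-crossing contributes a factor $q$, a positive folding a factor $q-1$, a negative crossing a single point, and negative foldings simply do not occur in the index set $\Gamma^+_{S^j_{V_j}}(\textbf{i}_j,op)$ because it consists of positively folded galleries only. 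The prefactor $q^{l(w_{D_0})}$ from the first edge and the multiplicativity over $j$ coming from the iterated-bundle structure are likewise as in \cite{GL1}.

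One correction to your last paragraph: Theorem~\ref{globloc} is not what makes the strata exhaust the relevant points. The global positivity condition --- the decreasing chain of sector classes $\overline{S_0}\geq\dots\geq\overline{S_r}$ in the definition of $\Gamma^+(\delta_\lambda,\mu)$ --- is produced directly by the retraction at infinity, and the formula holds in arbitrary type, whereas Theorem~\ref{globloc} as stated here is restricted to types $A_n$, $B_n$, $C_n$; in this paper its role is to feed the bijection between positively folded galleries and semistandard Young tableaux in Section~4, not to underwrite the point count. With that repaired, your sketch is a correct plan of the \cite{GL1} proof, with the expected caveat that the ``delicate step'' you flag (the cell decomposition giving exactly $q^{t(\textbf{c})}(q-1)^{r(\textbf{c})}$ per stratum) is indeed where the technical weight of the original paper lies and is left unproved in your outline.
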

One significant property of this formula is that the summand of the first sum for $\delta=(\mathfrak{o}=V_0 \subset E_0 \supset \dots \supset V_{r+1})$  can be interpreted as the product over all edges $E_j$ for $j \in \{0,\dots,r\}$ as follows:\\
Define $$c(E_0)= q^{l(w_{D_0})} \text{ and }$$
$$ c((E_{j-1} \supset V_j \subset E_{j}))=\sum_{\textbf{c} \in \Gamma^+_{S_{V_j}^j}(\textbf{i}_j, op)}q^{t(\textbf{c})}(q-1)^{r(\textbf{c})} \; \text{ for } j \in \{1, \dots, r\}.$$
We obtain 
\begin{ecor}
$$ L_{\lambda, \mu}(q)= \sum_{\delta \in \Gamma^+(\delta_{\lambda}, \mu)}c(E_0)\prod_{j=1}^rc((E_{j-1} \supset V_j \subset E_{j})).$$
\end{ecor}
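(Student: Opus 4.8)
The plan is to observe that this corollary is a direct reformulation of the Gaussent-Littelmann formula stated in the preceding theorem, obtained by substituting the definitions of the edge contributions $c(E_0)$ and $c((E_{j-1}\supset V_j \subset E_j))$. First I would start from the right-hand side of the theorem, namely $\sum_{\delta}q^{l(w_{D_0})}\bigl(\prod_{j=1}^r \sum_{\textbf{c}}q^{t(\textbf{c})}(q-1)^{r(\textbf{c})}\bigr)$, and isolate, for each $\delta\in\Gamma^+(\delta_{\lambda},\mu)$, the prefactor $q^{l(w_{D_0})}$ together with the $r$ factors of the inner product.

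Next I would match these against the defined quantities term by term: the prefactor $q^{l(w_{D_0})}$ is by definition exactly $c(E_0)$, and for each $j\in\{1,\dots,r\}$ the inner sum $\sum_{\textbf{c}\in\Gamma^+_{S^j_{V_j}}(\textbf{i}_j,op)}q^{t(\textbf{c})}(q-1)^{r(\textbf{c})}$ is by definition $c((E_{j-1}\supset V_j \subset E_{j}))$. Substituting these two identities into the formula of the theorem then yields the asserted expression $\sum_{\delta}c(E_0)\prod_{j=1}^r c((E_{j-1}\supset V_j \subset E_{j}))$.

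The only point that requires a word of justification, and which is what gives the reformulation its content, is that the $j$-th factor genuinely depends only on the $2$-step one-skeleton gallery $(E_{j-1}\supset V_j \subset E_{j})$ and not on the remaining data of $\delta$. I would verify this by tracing the definitions in the construction preceding the theorem: the chamber $D_j$ is determined by $V_j$ and $E_j$, the sector $S^j$ by $V_j$, $E_{j-1}$ and the type of $E_j$, and hence the type $\textbf{i}_j$ and the chamber $S^j_{V_j}$ that enter $\Gamma^+_{S^j_{V_j}}(\textbf{i}_j,op)$ are all determined by $E_{j-1}$, $V_j$ and $E_j$ alone (with a fixed choice of reduced expression for $\textbf{i}_j$). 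This makes the notation $c((E_{j-1}\supset V_j \subset E_{j}))$ well posed and legitimizes writing the summand attached to $\delta$ as a product over its edges.

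Since beyond unwinding these definitions there is no further computation to perform, I do not expect a genuine obstacle here; the substantive input is the theorem itself, and the role of the corollary is merely to record the edge-factorization that will be exploited in the subsequent combinatorial analysis.
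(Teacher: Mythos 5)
Your proposal is correct and matches the paper exactly: the paper gives no separate proof, obtaining the corollary immediately by substituting the definitions $c(E_0)=q^{l(w_{D_0})}$ and $c((E_{j-1}\supset V_j\subset E_j))=\sum_{\textbf{c}\in\Gamma^+_{S^j_{V_j}}(\textbf{i}_j,op)}q^{t(\textbf{c})}(q-1)^{r(\textbf{c})}$ into the theorem. Your additional check that each factor depends only on the $2$-step gallery $(E_{j-1}\supset V_j\subset E_j)$ is likewise the observation the paper records right after the corollary, so nothing is missing.
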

Define $$ c(\delta)= c(E_0)\prod_{j=1}^rc((E_{j-1} \supset V_j \subset E_{j})).$$
We want to point out that the contribution $c(E_0)$ of $E_0$ only depends on the edge $E_0$ itself and that the contribution $c((E_{j-1} \supset V_j \subset E_{j}))$ of $E_j$ for $j \in \{1, \dots, r\} $ only depends on the 2-step gallery $(E_{j-1} \supset V_j \subset E_j)$. 
\section{Combinatorial Gaussent-Littelmann formula}
In this section we state a recurrence for the set of galleries of chambers $\Gamma^+_{S^j_{V_j}}(\textbf{i}_j, op)$ that is used in the Gaussent-Littelmann formula in order to compute $L_{\lambda, \mu}$. This recurrence holds for arbitrary type. Furthermore, we introduce Young tableaux of type $A_n$,$B_n$ and $C_n$ that can be identified with one-skeleton galleries in the associated standard apartment of the affine building. Using the language of Young tableaux and the recurrence leads to a combinatorial version of the Gaussent-Littelmann formula for type $A_n$, $B_n$ and $C_n$ in terms of Young tableaux. 
\subsection{Recurrence}
Let $\lambda$ and $\mu$ be dominant coweights and fix an enumeration $\omega_1, \dots, \omega_n$ of the fundamental weights. Let $\delta =(\mathfrak{o}=V_0 \subset E_0 \supset \dots \supset V_{r+1})$ be a positively folded combinatorial one-skeleton gallery in $\Gamma^+(\delta_{\lambda}, \mu)$. In this section we want to take a closer look at the set $\Gamma^+_{S^j_{V_j}}(\textbf{i}_j, op)$ for $j\in \{1, \dots,r\}$. To calculate $\Gamma^+_{s^j_{V_j}}(\textbf{i}_j,op)$ we only need the 2-step one-skeleton gallery $(E_{j-1} \supset V_j \subset E_j)$. This gallery on its own is again positively folded. For this reason we work in this section only with positively folded 2-step one-skeleton galleries. Let $(E \supset V \subset F)$ be a 2-step one-skeleton gallery in $\mathbb{A}$. Throughout the following we identify ($E \supset V \subset F$) in $ \mathbb{A}$ with the associated gallery in $\mathbb{A}_V$. First of all we need to fix some more definitions:\\ \\
Let $k_2$ be the type of the face $F$ in $\mathbb{A}_V$. Further let $F_f$ be the face in $C_V^-$ with type $k_2$. Then $w_F \in W_V^a$ denotes the minimal Weyl group element that sends $F_f$ to $F$. The \textbf{length} of a face $F$ is defined by
$$ l(F):=l(w_F).$$ \\ 
This is equivalent to saying that $l(F)$ is the length of the minimal Weyl group element that sends $C_V^-$ to a chamber that contains $F$.\\ \\
Let $D$ be the chamber in $\mathbb{A} _V$ that contains $F$ and is closest to $C_V^-$. Further let $s_{i_1}\cdots s_{i_l}$ be a reduced expression of the Weyl group element $w_F$ in $W_V^a$. Then $s_{i_1}\cdots s_{i_l}$ sends $C_V^-$ to $D$. Define $ \textbf{i}=(i_1, \dots, i_l)$. Let $S_V$ be a chamber in $\mathbb{A}_V$ that contains $E$. Now define $ \Gamma ^+_{S_V}(\textbf{i}, op)= \Gamma ^+_{S_V}((E\supset V \subset F), \textbf{i})$ to be the set of all galleries of chambers $\textbf{c} = (C_V^-,C_1 \dots, C_l)$ with type $\textbf{i}$ that are positively folded with respect to $S_V$ and with the property that the face that is contained in $C_l$ and has the same type as $F$ forms a minimal pair with $E$ in $\mathbb{A}_V$. Define $\Gamma^+_{S_V}((E \supset V \subset F))$ to be the disjoint union of all $\Gamma ^+_{S_V}((E\supset V \subset F), \textbf{j}=(j_1, \dots, j_l))$ where $s_{j_1}\dots s_{j_l}$ is a reduced expression of $w_F$ in $W_V^a$.\\ \\
Let $k_1$ be the type of $E$ and $k_2$ the type of $F$ in $\mathbb{A}_V$, then $\textbf{k}=(k_1,k_2)$ denotes the type of the 2-step one-skeleton gallery $(E \supset V \subset F)$. Define the following sets:

$$ \Gamma^+(\textbf{k})= \{ (E' \supset V \subset F') \; \vert \; \text{positively folded with type } \textbf{k}\},$$
$$ \Gamma^+_a(\textbf{k})= \underset{S_V' \text{ a chamber in }  \mathbb{A}_V \; \text{that contains } \; E' }{\bigcup_{(E' \supset V \subset F') \in \Gamma^+(\textbf{k})}}\Gamma^+_{S_V'}((E'\supset V \subset F')) .$$  
Further we can write $\Gamma^+_{S_V}((E \supset V \subset F))$ as the disjoint union of the set of galleries $\textbf{c} \in \Gamma^+_{S_V}((E \supset V \subset F))$ with a folding in the first position, call this set $\Gamma^f_{S_V}((E \supset V \subset F))$, with the set of galleries $\textbf{c}$ in $\Gamma^+_{S_V}((E \supset V \subset F))$ with a crossing in the first position, call this set $\Gamma^c_{S_V}((E \supset V \subset F))$:
$$ \Gamma^+_{S_V}((E \supset V \subset F)) = \Gamma^f_{S_V}((E \supset V \subset F)) \; \cup \; \Gamma^c_{S_V}((E \supset V \subset F)).$$
\\ \\
We are going to state a recurrence for the set $\Gamma^+_a(\textbf{k})$. For this purpose we need to define two different ways to construct a gallery of chambers in $\Gamma^+_a(\textbf{k})$ out of a given one. Let us begin with the first construction:\\ \\
We want to start with a gallery of chambers in $\Gamma^+_{S_V}((E\supset V \subset F))$ and end up with one in $\Gamma^+_{s_j(S_V)}((s_j(E)\supset V \subset s_j(F)))$ where $l(s_j(F))=l(F)+1$. 
\begin{rem}
For $F \notin C_V^+$ there always exists such a reflection.
\end{rem}
Let $\textbf{c}= (C_{V}^-,C_1,\dots, C_r) \in \Gamma ^+_{S_V}((E \supset V \subset F), \textbf{i})$ be a gallery of chambers.\\ 
Let $s_j$ be a simple reflection in $W_V^a$ with $$ l(s_j(F))=l(F)+1.$$
Define $$ \textbf{c}_1 = (C_{V}^-, s_j(C_{V}^-), s_j(C_1), \dots, s_j(C_l))$$
with type $(j, \textbf{i})$.
\begin{satz}\label{konst1}
Then $$ \textbf{c}_1 \in \Gamma^c_{s_j(S_V)}(s_j(E) \supset V \subset s_j(F),(j,\textbf{i}) ) $$
holds and we get the following map:
\begin{align*} \nonumber
 f_1^j \; : \; &\Gamma^+_{S_V}((E \supset V \subset F), \textbf{i} ) &\longrightarrow  \; \;\;\;\;\;\;\;&\Gamma^c_{s_j(S_V)}((s_j(E) \supset V \subset s_j(F)), (j, \textbf{i})) \\    
 &\textbf{c}=(C_{V}^-,C_1, \dots, C_l) &\longmapsto  \; \;\;\;\;\;\;\;&\textbf{c}_1= (C_{V}^-, s_j(C_{V}^-),s_j(C_1), \dots , s_j(C_l)).
\end{align*}
Moreover the one-skeleton gallery $(s_j(E) \supset V \subset s_j(F))$ is positively folded.
\end{satz}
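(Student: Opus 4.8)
The whole statement rests on the single structural fact that the simple reflection $s_j \in W_V^a$ is a type-preserving isometry of $\mathbb{A}_V$ that permutes the hyperplanes through $V$; consequently adjacency of chambers, wall-types, foldings, crossings, and minimality of pairs are all carried by $s_j$ from the reference chamber $S_V$ to $s_j(S_V)$. The plan is to use this to transport $\textbf{c}$ to $\textbf{c}_1$, and to use the length hypothesis $l(s_j(F)) = l(F)+1$ only at two isolated points: to see that the new type $(j,\textbf{i})$ is reduced, and to control the fixed chamber $C_V^-$ that governs positive foldedness of $2$-step galleries.

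First I would record the type of $\textbf{c}_1$. Writing $\textbf{i}=(i_1,\dots,i_l)$ for the given reduced word of $w_F$, the hypothesis gives $l(w_{s_j(F)}) = l(s_j(F)) = l(F)+1 = l(w_F)+1$; since $s_j w_F$ sends $F_f$ to $s_j(F)$ we get $l(w_{s_j(F)}) \le l(s_j w_F)\le l(w_F)+1$, so all inequalities are equalities and $s_j w_F = w_{s_j(F)}$ with $(j,\textbf{i})$ a reduced word for it. Hence $\textbf{c}_1=(C_V^-, s_j(C_V^-), s_j(C_1),\dots,s_j(C_l))$ is a genuine gallery of chambers: the tail is the $s_j$-image of $\textbf{c}$, so consecutive chambers stay adjacent and the wall-types $i_1,\dots,i_l$ are preserved, while the initial pair $(C_V^-, s_j(C_V^-))$ crosses $H_{(\beta_j,n_j)}$ and contributes the letter $j$. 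In particular the first pair is a wall-crossing (the two chambers are distinct), so $\textbf{c}_1$ belongs to the crossing-in-the-first-position part $\Gamma^c$.

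Next I would transport the two remaining properties. A pair $(s_j(C_{m-1}), s_j(C_m))$ with $m\ge 1$ is a folding precisely when $(C_{m-1},C_m)$ is, and $s_j$ maps the hyperplane separating $S_V$ from $C_{m-1}=C_m$ to the hyperplane separating $s_j(S_V)$ from $s_j(C_{m-1})=s_j(C_m)$; since $\textbf{c}$ is positively folded with respect to $S_V$ and the only new pair is the initial crossing, every folding of $\textbf{c}_1$ is positive with respect to $s_j(S_V)$. Likewise the type-$k_2$ face of the last chamber $s_j(C_l)$ is the $s_j$-image of the type-$k_2$ face $F''$ of $C_l$, and applying $s_j$ to the sector witnessing minimality of $(E\supset V\subset F'')$ shows $(s_j(E)\supset V\subset s_j(F''))$ is again minimal, i.e. the minimal-pair condition is preserved. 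Together these place $\textbf{c}_1$ in $\Gamma^c_{s_j(S_V)}((s_j(E)\supset V\subset s_j(F)),(j,\textbf{i}))$ and show that $\textbf{c}\mapsto \textbf{c}_1$ is the asserted well-defined map $f_1^j$.

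The main obstacle is the final claim that $(s_j(E)\supset V\subset s_j(F))$ is itself positively folded, because positive folding of $2$-step galleries is defined relative to the fixed chamber $C_V^-$, which $s_j$ does \emph{not} preserve, so one cannot simply apply $s_j$ to a positive folding sequence for $(E\supset V\subset F)$. Here the length hypothesis re-enters: by the standard reflection-length criterion $l(s_j(F))=l(F)+1$ says that $H_{(\beta_j,n_j)}$ does not separate $C_V^-$ from the chamber closest to $C_V^-$ containing $F$, hence $F\subseteq H^-_{(\beta_j,n_j)}$, the same closed half-space as $C_V^-$. Starting from a minimal $(E\supset V\subset F^{(0)})$ and a positive folding sequence $F^{(0)},\dots,F^{(k)}=F$, the image base $(s_j(E)\supset V\subset s_j(F^{(0)}))$ is minimal by reflecting the witnessing sector, and each image step $s_j(F^{(i-1)})\to s_j(F^{(i)})$ across $s_j(H^{(i)})$ is again a positive folding with respect to $C_V^-$ as soon as the folding hyperplane $H^{(i)}$ is different from $H_{(\beta_j,n_j)}$ — for then $C_V^-$ and $s_j(C_V^-)$, which are separated only by $H_{(\beta_j,n_j)}$, lie on the same side of $s_j(H^{(i)})$. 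The delicate point, and the one I expect to require the most care, is therefore to show that no folding hyperplane of the sequence equals $H_{(\beta_j,n_j)}$, equivalently that the whole sequence $F^{(0)},\dots,F^{(k)}$ stays weakly on the $C_V^-$-side of $H_{(\beta_j,n_j)}$; this is exactly where $F\subseteq H^-_{(\beta_j,n_j)}$ together with the monotone ``away-from-$C_V^-$'' nature of positive foldings must be used. Granting this, every image step is a genuine positive folding relative to $C_V^-$, so $(s_j(E)\supset V\subset s_j(F))$ is positively folded, completing the proof.
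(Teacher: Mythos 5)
Your verification of the main membership claim is correct and coincides with the paper's own argument: the paper also checks exactly the three points (i) that $(j,\textbf{i})$ is a reduced word, via $l(s_jw_F)=l(w_F)+1$ (your refinement $w_{s_j(F)}=s_jw_F$ is the same computation made slightly more explicit), (ii) that all foldings of $\textbf{c}_1$ are positive with respect to $s_j(S_V)$ because $s_j$ transports foldings, separating hyperplanes and the reference chamber simultaneously, and (iii) that the minimal-pair condition is preserved by applying $s_j$ to the sector witnessing minimality. Up to this point your proposal is sound and essentially identical to the paper.

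The genuine gap is in the final claim that $(s_j(E)\supset V\subset s_j(F))$ is positively folded. You correctly identify that transporting a positive folding sequence by $s_j$ works only for folding hyperplanes different from $H_{(\beta_j,n_j)}$, and you then reduce everything to the assertion that no folding hyperplane of the sequence equals $H_{(\beta_j,n_j)}$ --- but you explicitly grant this (``Granting this\dots'') instead of proving it, and the justification you gesture at is not available: positive foldings are ``monotone'' only with respect to the folding wall of each individual step, not with respect to the fixed wall $H_{(\beta_j,n_j)}$. Indeed, writing a positive folding as $F\mapsto s_{\gamma}(F)$ one has $\langle s_{\gamma}(F),\beta_j\rangle=\langle F,\beta_j\rangle-\langle \gamma\check{},\beta_j\rangle\langle F,\gamma\rangle$, and as soon as faces can pair with roots in values of absolute value at least $2$ --- which is exactly the non-minuscule situation of types $B_n$ and $C_n$ that this construction must cover, since the recurrence is stated for arbitrary type --- a later positive folding can in principle carry a face from the far side of $H_{(\beta_j,n_j)}$ back to the $C_V^-$-side, so your ``the sequence stays weakly on the $C_V^-$-side'' needs a real argument; and even where the transported sequence fails, the statement only requires the existence of \emph{some} positive folding sequence, which your construction does not produce. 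The paper sidesteps this entirely: it deduces the ``moreover'' from Theorem 6.5 of \cite{GL1}, which characterizes positively folded one-skeleton galleries in terms of the associated galleries of chambers in the residue building, so that the membership $\textbf{c}_1\in\Gamma^c_{s_j(S_V)}((s_j(E)\supset V\subset s_j(F)),(j,\textbf{i}))$ you have already established yields the claim immediately. Your proposal is therefore repairable by a single citation, but as written its last assertion is unproved.
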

\begin{proof}
In order to prove the statement we need to check three properties of the new gallery $\textbf{c}_1$:
\begin{itemize}
    \item[(i)] type of  $\textbf{c}_1$  has to be correct, 
    \item[(ii)] all foldings in $\textbf{c}_1$ are positive with respect to $s_j(S_V)$, 
    \item[(iii)] the face that is contained in $s_j(C_l)$ and has the same type as $s_j(F)$ forms a minimal pair with $s_j(E)$.
\end{itemize}
For (i): The element $s_jw_F$ sends $C_V^-$ to the chamber that contains $s_j(F)$ and is closest to  $C_V^-$ because $l(s_jw_F)= l(w_F)+1$. Further $s_{i_1}\cdots s_{i_l}$ is a reduced expression of $w_F$. It follows:
$$ s_{j}s_{i_1}\cdots s_{i_l}$$ is a reduced expression of $s_jw_F$.\\
For (ii): Since all foldings in $\textbf{c}$ are positive with respect to $S_V$ all foldings in $\textbf{c}_1$ are positive with respect to $s_j(S_V)$.\\
For (iii): As $ \textbf{c} \in \Gamma ^+((E \supset V \subset F), \textbf{i})$ the following holds: The face $F'$ that is contained in $C_l$ and has the same type as $F$ forms a minimal pair with $E$ in $\mathbb{A}_V$. Subsequently, $s_j(E)$ and $s_j(F')$ form a minimal pair and since $s_j(F')\in s_j(C_l)$ we get the desired property.\\
From Theorem 6.5 in \cite{GL1} it is clear that ($s_j(E) \supset V \subset s_j(F)$) is positively folded.
\end{proof}
\begin{rem}
Informally speaking we reflect the gallery and extend it in such a way that the new gallery starts again in $C_V^-$.
\end{rem}
We have the following lemma.
\begin{elem}\label{pm1}
Let pm($\textbf{c}$) be the $\pm$-sequence of $\textbf{c}$ with respect to $S_V$. Then
$$ \text{pm}(\textbf{c}_1)=(a, \text{pm}(\textbf{c})) \; \text {, where }$$
$$ a=  \begin{cases} +\; ,  & l(s_jw_{S_V})= l(w_{S_V})-1 \\
                     - \; , & \text{else}
       \end{cases} ,$$
where $w_{S_V} \in W_V^a$ is the element that sends $C_V^-$ to $S_V$ is the $\pm$-sequence of \textbf{$c_1$} with respect to $s_j(S_V)$.

\end{elem}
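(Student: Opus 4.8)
The plan is to read off $\text{pm}(\textbf{c}_1)$ transition by transition, separating its very first entry from the remaining $l$ entries. By construction $\textbf{c}_1 = (C_V^-, s_j(C_V^-), s_j(C_1), \dots, s_j(C_l))$, so its first transition is $(C_V^-, s_j(C_V^-))$ of type $j$, while its transitions in positions $2, \dots, l+1$ are exactly the images under $s_j$ of the transitions of $\textbf{c}$ in positions $1, \dots, l$. I would show that these last $l$ entries reproduce $\text{pm}(\textbf{c})$ and that the first entry equals $a$.

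For the tail I would use that $s_j \in W_V^a$ acts on $\mathbb{A}_V$ as an isometry preserving the hyperplane arrangement and the types of faces, and that it sends $S_V$ to $s_j(S_V)$. Consequently $s_j$ carries each transition $(C_{k-1}, C_k)$ of $\textbf{c}$ to a transition $(s_j(C_{k-1}), s_j(C_k))$ of the same type, turning foldings into foldings and crossings into crossings, and it carries the wall governing the sign at step $k$ of $\textbf{c}$ to the wall governing the sign at step $k+1$ of $\textbf{c}_1$. Since an isometry preserves separation of a face from a chamber by a hyperplane, and since the reference chamber is itself moved from $S_V$ to $s_j(S_V)$, the sign at step $k$ with respect to $S_V$ agrees with the sign at step $k+1$ with respect to $s_j(S_V)$. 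Hence the last $l$ entries of $\text{pm}(\textbf{c}_1)$ are precisely $\text{pm}(\textbf{c})$.

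It remains to compute the first entry $a$. As $s_j(C_V^-) \neq C_V^-$, the first transition is a genuine wall-crossing across $H_{(\beta_j,n_j)}$, so $a = +$ exactly when it is a positive crossing with respect to $s_j(S_V)$, i.e. when $H_{(\beta_j,n_j)}$ separates $s_j(S_V)$ and $C_V^-$ from $s_j(C_V^-)$. Since $H_{(\beta_j,n_j)}$ always separates $C_V^-$ from $s_j(C_V^-)$ (the two chambers are adjacent across this simple wall), this reduces to $s_j(S_V)$ lying on the same side of $H_{(\beta_j,n_j)}$ as $C_V^-$; applying $s_j$, which interchanges the two half-spaces bounded by its own wall, this is equivalent to $S_V$ lying on the $s_j(C_V^-)$-side, that is, to $H_{(\beta_j,n_j)}$ separating $C_V^-$ from $S_V = w_{S_V}(C_V^-)$. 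By the standard fact that a simple wall separates the fundamental chamber $C_V^-$ from $w_{S_V}(C_V^-)$ if and only if $l(s_j w_{S_V}) = l(w_{S_V}) - 1$, this is exactly the condition defining $a = +$, and otherwise the crossing is negative and $a = -$.

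The routine part is the tail, which is forced purely by $W_V^a$-equivariance; the one point that needs care is the bookkeeping for $a$, where passing to the reference chamber $s_j(S_V)$ and then reflecting by $s_j$ flips the relevant half-space, so that the positivity of the new crossing is controlled by the length \emph{decrease} of $s_j w_{S_V}$ rather than an increase. Note that the hypothesis $l(s_j(F)) = l(F)+1$ only guarantees that the construction is well defined; the sign $a$ itself depends on $S_V$, not on $F$, which is why it is governed by $w_{S_V}$.
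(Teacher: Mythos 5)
Your proposal is correct and takes essentially the same route as the paper: the last $l$ entries are unchanged by $W_V^a$-equivariance (which the paper dismisses as ``clear'' and you spell out), and the sign $a$ of the initial crossing $(C_V^-, s_j(C_V^-))$ is computed from the standard criterion that the wall of the simple reflection $s_j$ separates $C_V^-$ from $u(C_V^-)$ if and only if $l(s_ju)=l(u)-1$. Your intermediate step of reflecting by $s_j$ to reduce positivity to whether that wall separates $C_V^-$ from $S_V$ is just an equivalent phrasing of the paper's direct computation with $u=s_jw_{S_V}$, where $l(s_js_jw_{S_V})=l(s_jw_{S_V})\pm 1$ translates into the stated condition $l(s_jw_{S_V})=l(w_{S_V})-1$.
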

\begin{proof}
That the two $\pm$-sequences coincide except for the first crossing is clear. It remains to calculate the sign of the first crossing:\\
Consider $(s_j(E) \supset V \subset s_j(F))$. The element $s_jw_{S_V}$ sends $C_V^-$ to $s_j(S_V)$.
 For the first crossing we get
$$   + \; \; \; \; \text{, if }\; l(s_js_jw_{s_V})= l(s_jw_{s_V})+1,$$
$$ - \; \; \; \; \text{, if } \; l(s_js_jw_{s_V})= l(s_jw_{s_V})-1.$$
And this is equivalent to saying
$$   - \; \; \; \; \text{, if }\; l(s_jw_{s_V})= l(w_{s_V})+1,$$
$$ + \; \; \;\; \text{, if } \; l(s_jw_{s_V})= l(w_{s_V})-1.$$
\end{proof}
\textbf{Example}\\
Let $(E \supset V \subset F)$ be a 2-step one-skeleton gallery in the standard apartment of the residue building of type $A_2$ with $E=F=\{ V + t (\epsilon_1+\epsilon_3) \; \vert  \; t \in \left[0, \infty \right]\}$:
\begin{center}

     \begin{tikzpicture}

    \coordinate(o) at (0,0) node[red, above right] {\tiny{$V$}};

   	\draw[->] (0,0)-- ++(60:2cm);
   	\draw[->] (0,0)-- ++(120:2cm);
   	\draw[->] (0,0)-- ++(180:2cm);
    \draw[->] (0,0)-- ++(240:2cm);
   	\draw[->] (0,0)-- ++(300:2cm);
   	\draw[->] (0,0)-- ++(0:2cm);

   	
   	\draw[above] (-1cm,0) node {\tiny{$V+\epsilon_2$}};
   	\draw[right] (0.5cm,0.87cm) node {\tiny{$V+\epsilon_1$}};
   	\draw[right] (0.5cm,-0.87cm) node {\tiny{$V+\epsilon_3$}};
   	\fill (-1cm,0) circle (1pt);
   	\fill (0.5cm,0.87cm) circle (1pt);
   	\fill (0.5cm,-0.87cm) circle (1pt);
   	\fill[red] (0,0) circle (1pt);
   	 
   	\draw[red] (0,0)--(2cm,0);
   	\draw[red,above] (1cm,0) node {\tiny{$F$}};
   	\draw[red,below] (1cm,0) node {\tiny{$E$}}; 
   	\draw[red, ->] (0,0)-- ++(90:0.05cm)-- ++(0:2cm);

   	\draw (0,-1.5cm) node {\small{$C_V^-$}};
   	\draw (1.4cm, -0.7cm) node {\small{$S_V$}};

   	\draw[dotted,->] (0,-1cm)-- ++(30:0.5cm)-- ++(300:0.05cm)-- ++(210:0.5);

     \end{tikzpicture}
     
\end{center}
The dotted line in the picture illustrates the only gallery of chambers  $\textbf{c}$ in the set $\Gamma^+_{S_V}((E \supset V \subset F),(1))$. For the simple reflection $s_2 \in W_V^a$ which is the reflection at the hyperplane containing $V+\epsilon_1$ it holds that $l(s_2(F))=l(F)+1$. Consequently, we can apply $f_1^2$ to $\Gamma^+_{S_V}((E \supset V \subset F),(1))$:\\
\begin{center}

     \begin{tikzpicture}

    \coordinate(o) at (0,0) node[red, above right] {\tiny{$V$}};

   	\draw[->] (0,0)-- ++(60:2cm);
   	\draw[->] (0,0)-- ++(120:2cm);
   	\draw[->] (0,0)-- ++(180:2cm);
    \draw[->] (0,0)-- ++(240:2cm);
   	\draw[->] (0,0)-- ++(300:2cm);
   	\draw[->] (0,0)-- ++(0:2cm);

   	
   	\draw[above] (-1cm,0) node {\tiny{$V+\epsilon_2$}};
   	\draw[right] (0.5cm,0.87cm) node {\tiny{$V+\epsilon_1$}};
   	\draw[right] (0.5cm,-0.87cm) node {\tiny{$V+\epsilon_3$}};
   	\fill (-1cm,0) circle (1pt);
   	\fill (0.5cm,0.87cm) circle (1pt);
   	\fill (0.5cm,-0.87cm) circle (1pt);
   	\fill[red] (0,0) circle (1pt);
   	 
   	\draw[red] (0,0)--(120:2cm);
   	\draw[red,right] (-0.6cm,1.1cm) node {\tiny{$s_2(F)$}};
   	\draw[red,left] (-0.6cm,1.1cm) node {\tiny{$s_2(E)$}}; 
   	\draw[red, ->] (0,0)-- ++(30:0.05cm)-- ++(120:2cm);

   	\draw (0,-1.5cm) node {\small{$C_V^-$}};
   	\draw (-1.4cm, +0.7cm) node {\small{$s_2(S_V)$}};

   	\draw[dotted,->] (0,-1cm)-- ++(150:1cm)-- ++(90:0.5cm)-- ++(180:0.05cm)-- ++(270:0.5cm);

     \end{tikzpicture}
     
\end{center}
The dotted line in the picture illustrates the new gallery  of chambers $\textbf{c}_1$.\\ \\

Let us now get to the second construction. In this construction we start with a gallery of chambers in $\Gamma^+_{S_V}((E \supset V \subset F))$ and end up with a gallery of chambers in $\Gamma^+_{S_V}((E \supset V \subset s_j(F)))$ where $l(s_j(F))=l(F)+1$.
Let again $\textbf{c}= (C_{V}^-,C_1,\dots, C_r) \in \Gamma ^+_{S_V}((E \supset V \subset F), \textbf{i})$ be a gallery of chambers.\\
Let $s_j$ be a simple reflection in $W_V^a$ with $l(s_j(F))=l(F)+1$ and $l(s_jw_{S_V})= l(w_{S_V})-1.$\\
Define $$ \textbf{c}_2 = (C_{V}^-, C_{V}^-, C_1, \dots, C_l)$$
with type $(j,\textbf{i}).$
\begin{satz}
Then $$ \textbf{c}_2 \in \Gamma^f_{S_V}( (E  \supset V \subset s_j(F)), (j,\textbf{i})) $$
holds and we get the following map:
\begin{align*} \nonumber
 f_2^j \; : \; &\Gamma^+_{S_V}((E \supset V \subset F), \textbf{i} ) &\longrightarrow  \; \;\;\;\;\;\;\;&\Gamma^f_{S_V}((E \supset V \subset s_j(F)), (j,\textbf{i})) \\    
 &\textbf{c}=(C_{V}^-,C_1, \dots, C_l) &\longmapsto  \; \;\;\;\;\;\;\; &\textbf{c}_2= (C_{V}^-, C_{V}^-,C_1, \dots , C_l).
 \end{align*}
 Moreover the one-skeleton gallery $(E \supset V \subset s_j(F))$ is positively folded.
\end{satz}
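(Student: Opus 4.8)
The plan is to mirror the proof of Theorem \ref{konst1}, verifying the same three membership conditions for $\textbf{c}_2$ — that its type is $(j,\textbf{i})$, that all of its foldings are positive with respect to $S_V$, and that the face of its final chamber of the type of $s_j(F)$ forms a minimal pair with $E$ — and then separately establishing that the target $2$-step gallery $(E\supset V\subset s_j(F))$ is positively folded. The essential difference from the first construction is that here neither $E$, $S_V$ nor $C_V^-$ is moved; instead $\textbf{c}_2$ is obtained from $\textbf{c}$ by prepending the degenerate step $(C_V^-,C_V^-)$, so the new initial step is a \emph{folding} at the type-$j$ wall of $C_V^-$ rather than a wall-crossing.

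First I would dispose of the type and minimal-pair conditions, which are essentially bookkeeping. Since $s_{i_1}\cdots s_{i_l}$ is a reduced expression for $w_F$ and the hypothesis $l(s_j(F))=l(F)+1$ gives $l(s_jw_F)=l(w_F)+1$, the word $s_js_{i_1}\cdots s_{i_l}$ is a reduced expression for $w_{s_j(F)}$; the inserted folding at the type-$j$ wall contributes the leading letter $j$, so $\textbf{c}_2$ indeed has type $(j,\textbf{i})$. For the minimal-pair condition, note that $s_j\in W_V^a$ preserves types, so the face of the final chamber $C_l$ of the type of $s_j(F)$ is literally the same face as the one of the type of $F$; by hypothesis on $\textbf{c}$ this face already forms a minimal pair with $E$, so the condition is inherited verbatim. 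Likewise the foldings of $\textbf{c}_2$ coming from $\textbf{c}$ remain positive with respect to $S_V$, exactly as before.

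The crux is the single new folding together with the positive-foldedness of $(E\supset V\subset s_j(F))$, and both rest on translating the two length hypotheses on $s_j$ into separation statements about the wall $H_{(\beta_j,n_j)}$. Using the standard fact that $l(s_jw)=l(w)-1$ holds if and only if the type-$j$ wall of $C_V^-$ separates $C_V^-$ from $w(C_V^-)$, the hypothesis $l(s_jw_{S_V})=l(w_{S_V})-1$ says precisely that $H_{(\beta_j,n_j)}$ separates $C_V^-$ from $S_V$; hence the folding $(C_V^-,C_V^-)$ at that wall is positive with respect to $S_V$, which completes condition (ii). For the last claim, the other hypothesis $l(s_j(F))=l(F)+1$ shows that $H_{(\beta_j,n_j)}$ does \emph{not} separate $C_V^-$ from $F$ but does separate $s_j(F)$ from the common side of $C_V^-$ and $F$; this is exactly the defining condition for obtaining $(E\supset V\subset s_j(F))$ from $(E\supset V\subset F)$ by a positive folding, so since the latter is positively folded so is the former (this is also immediate from Theorem 6.5 in \cite{GL1}).

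I expect the main obstacle to be keeping the two length conditions pointed in the right directions: one must yield positivity of the new folding, which is measured \emph{toward} $S_V$, while the other must yield the positive-folding relation defining global positive-foldedness, which is measured \emph{toward} the anti-dominant chamber $C_V^-$. Verifying that $l(s_j(F))=l(F)+1$ forces $F$ (and not $s_j(F)$) to lie on the $C_V^-$-side of $H_{(\beta_j,n_j)}$ — rather than the reverse — is the one place where a sign error could creep in, and is the step I would check most carefully.
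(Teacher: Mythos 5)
Your proposal is correct and follows essentially the same route as the paper: it verifies the same three conditions (type, positivity of foldings, minimal pair) for $\textbf{c}_2$, reducing the type condition to the first construction and observing that the minimal-pair condition is inherited since $t(s_j(F))=t(F)$ and $C_l$ is unchanged. The only difference is cosmetic: you unpack the length-versus-separation equivalences (that $l(s_jw_{S_V})=l(w_{S_V})-1$ means $H_{(\beta_j,n_j)}$ separates $S_V$ from $C_V^-$, giving positivity of the new folding, and that $l(s_j(F))=l(F)+1$ places $F$ on the $C_V^-$-side so that appending one positive folding is licit by the definition), details the paper leaves implicit or delegates to Theorem 6.5 of \cite{GL1}, and your sign bookkeeping is correct in both places.
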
 
\begin{proof}
Like in the first construction we need to check three properties of the new gallery $\textbf{c}_2$:
\begin{itemize}
    \item[(i)] type of $\textbf{c}_2$ has to be correct, 
    \item[(ii)] all foldings have to be positive with respect to $S_V$,
    \item[(iii)] the face that is contained in $C_l$ with the same type as $s_j(F)$ forms a minimal pair with $E$.
\end{itemize}
For (i): see the proof of the last construction.\\
For (ii): neither $E$ nor the walls on which the foldings are are changed, it remains to show that the new folding in the first step is positive. For having a positive folding we need
$$ l(s_jw_{S_V})= l(w_{S_V})-1.$$ But this was our assumption.\\
For (iii): the type of $F$ coincides with the type of $s_j(F)$ and $C_l$ does not change, consequently the desired property follows.\\
It again follows from Theorem 6.5 in \cite{GL1} that $(E \supset V \subset s_j(F))$ is positively folded.
\end{proof}
As in the first construction we are interested in the $\pm$-sequence of the new gallery $\textbf{c}_2$:
\begin{elem}\label{pm2}
Let pm($\textbf{c}$) be the $\pm$-sequence of the gallery $\textbf{c}$ with respect to $S_V$.\\
Then:
$$ pm(\textbf{c}_2)=(+, pm(\textbf{c}))$$
is the $\pm$-sequence of $\textbf{c}_2$ with respect to $S_V$.
\end{elem}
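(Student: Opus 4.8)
The plan is to exploit the fact that $\textbf{c}_2$ differs from $\textbf{c}$ only by the insertion of one extra copy of $C_V^-$ at the front, so that almost every entry of the two $\pm$-sequences is forced to coincide, and then to pin down the single new entry, namely the sign of the folding in the first position.

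First I would record that the consecutive pairs of $\textbf{c}_2=(C_V^-,C_V^-,C_1,\dots,C_l)$ are the folding $(C_V^-,C_V^-)$ followed by $(C_V^-,C_1),(C_1,C_2),\dots,(C_{l-1},C_l)$, i.e. exactly the pairs occurring in $\textbf{c}$. Since neither the chambers $C_0,\dots,C_l$ nor the walls carrying the corresponding wall-crossings and foldings are altered, and since the reference chamber is still $S_V$, each of these pairs contributes the same sign in $\textbf{c}_2$ as in $\textbf{c}$. Hence $\text{pm}(\textbf{c}_2)=(c_0',\text{pm}(\textbf{c}))$, where $c_0'$ is the sign attached to $(C_V^-,C_V^-)$, and it remains only to show that $c_0'=+$.

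The pair $(C_V^-,C_V^-)$ consists of two equal chambers, so it is a folding, and its type is $j$. By definition it is positive with respect to $S_V$ precisely when $H_{(\beta_j,n_j)}$ separates $S_V$ from $C_V^-$. To establish this I would translate the length hypothesis of the second construction into a geometric statement via the standard Coxeter dictionary: for $w_{S_V}\in W_V^a$ the chamber $S_V=w_{S_V}(C_V^-)$ lies on the side of $H_{(\beta_j,n_j)}$ opposite to the fundamental chamber $C_V^-$ if and only if left multiplication by the simple reflection $s_j$ shortens $w_{S_V}$, that is, if and only if $l(s_jw_{S_V})=l(w_{S_V})-1$. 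This length-decrease condition is exactly the standing assumption imposed in the definition of $f_2^j$, so $H_{(\beta_j,n_j)}$ does separate $S_V$ from $C_V^-$, the first folding is positive, and $c_0'=+$.

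The only genuinely delicate point is this length-versus-separation dictionary, which is convention-independent and is in fact the very same computation already performed for the first crossing in Lemma \ref{pm1}: there the condition $l(s_jw_{S_V})=l(w_{S_V})-1$ produced the entry $+$ for a crossing, whereas here the identical condition produces a \emph{positive folding}. Combining the two observations yields $\text{pm}(\textbf{c}_2)=(+,\text{pm}(\textbf{c}))$, as claimed.
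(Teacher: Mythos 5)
Your proof is correct and is essentially the paper's own argument made explicit: the paper's proof of this lemma reads only ``There is nothing to show,'' because the positivity of the inserted first folding under the hypothesis $l(s_jw_{S_V})=l(w_{S_V})-1$ was already verified, via exactly the length-versus-separation dictionary you invoke, in point (ii) of the proof of the theorem introducing $f_2^j$, while the remaining entries of the $\pm$-sequence are unchanged for the reason you give. So your write-up simply spells out what the paper leaves implicit, with no difference in method.
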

\begin{proof}
There is nothing to show.
\end{proof}
\textbf{Example}\\
Let $(E \supset V \subset F)$ be a 2-step one-skeleton gallery in the standard apartment of the residue building of type $A_2$ with $E= \{V + t(\epsilon_1+\epsilon_2) \, \vert \; t \in \left[0,\infty\right]\}$ and $F=\{ V + t (\epsilon_1+\epsilon_3) \; \vert  \; t \in \left[0, \infty \right]\}$:
\begin{center}

     \begin{tikzpicture}

    \coordinate(o) at (0,0) node[red, above right] {\tiny{$V$}};

   	\draw[->] (0,0)-- ++(60:2cm);
   	\draw[->] (0,0)-- ++(120:2cm);
   	\draw[->] (0,0)-- ++(180:2cm);
    \draw[->] (0,0)-- ++(240:2cm);
   	\draw[->] (0,0)-- ++(300:2cm);
   	\draw[->] (0,0)-- ++(0:2cm);

   	
   	\draw[above] (-1cm,0) node {\tiny{$V+\epsilon_2$}};
   	\draw[right] (0.5cm,0.87cm) node {\tiny{$V+\epsilon_1$}};
   	\draw[above right] (0.5cm,-0.87cm) node {\tiny{$V+\epsilon_3$}};
   	\fill (-1cm,0) circle (1pt);
   	\fill (0.5cm,0.87cm) circle (1pt);
   	\fill (0.5cm,-0.87cm) circle (1pt);
   	\fill[red] (0,0) circle (1pt);
   	 
   	\draw[->, red] (0,0)--(2cm,0);
   	\draw[red,above] (1cm,0) node {\tiny{$F$}};
   	\draw[red,below] (-0.5cm,1.5cm) node {\tiny{$E$}}; 
   	\draw[red] (0,0)-- ++(120:2cm);

   	\draw (0,-1.5cm) node {\small{$C_V^-$}};
   	\draw (-1.4cm, 0.7cm) node {\small{$S_V$}};

   	\draw[dotted,->] (0,-1cm)-- ++(30:1cm);

     \end{tikzpicture}
     
\end{center}
The dotted line in the picture illustrates the only gallery of chambers  $\textbf{c}$ in the set $\Gamma^+_{S_V}((E \supset V \subset F),(1))$. For the simple reflection $s_2 \in W_V^a$ which is the reflection at the hyperplane containing $V+\epsilon_1$ it holds that $l(s_2(F))=l(F)+1$ and $l(s_2w_{S_V})=l(w_{S_V})-1$. Consequently, we can apply $f_2^2$ to $\Gamma^+_{S_V}((E \supset V \subset F),(1))$:\\
\begin{center}

     \begin{tikzpicture}

    \coordinate(o) at (0,0) node[red, above right] {\tiny{$V$}};

   	\draw[->] (0,0)-- ++(60:2cm);
   	\draw[->] (0,0)-- ++(120:2cm);
   	\draw[->] (0,0)-- ++(180:2cm);
    \draw[->] (0,0)-- ++(240:2cm);
   	\draw[->] (0,0)-- ++(300:2cm);
   	\draw[->] (0,0)-- ++(0:2cm);

   	
   	\draw[above] (-1cm,0) node {\tiny{$V+\epsilon_2$}};
   	\draw[right] (0.5cm,0.87cm) node {\tiny{$V+\epsilon_1$}};
   	\draw[above right] (0.5cm,-0.87cm) node {\tiny{$V+\epsilon_3$}};
   	\fill (-1cm,0) circle (1pt);
   	\fill (0.5cm,0.87cm) circle (1pt);
   	\fill (0.5cm,-0.87cm) circle (1pt);
   	\fill[red] (0,0) circle (1pt);
   	 
   	\draw[red] (0,0)--(120:2cm);
   	\draw[red,right] (-0.6cm,1.1cm) node {\tiny{$s_2(F)$}};
   	\draw[red,left] (-0.6cm,1.1cm) node {\tiny{$E$}}; 
   	\draw[red, ->] (0,0)-- ++(30:0.05cm)-- ++(120:2cm);

   	\draw (0,-1.5cm) node {\small{$C_V^-$}};
   	\draw (-1.4cm, +0.7cm) node {\small{$S_V$}};

   	\draw[dotted,->] (0,-1cm)-- ++(150:0.5cm)-- ++(240:0.05cm)-- ++(330:0.6cm)-- ++(30:1cm);

     \end{tikzpicture}
     
\end{center}
The dotted line in the picture illustrates the new gallery  of chambers $\textbf{c}_1$.\\ \\
Using the two constructions presented above we show that the set $\Gamma^+_a(\textbf{k})$ can be build recursively from special galleries which are described in the following lemma:
\begin{elem}
Let $(E \supset V \subset F)$ be a positively folded one-skeleton gallery with $F \in C_V^-$ and let $S_V$ be a chamber that contains $E$ and $V$ in the standard apartment of the residue building at $V$.\\
Then $$ \Gamma^+_{S_V}((E \supset V \subset F)) = \{\textbf{c}_0\}=\{(C_V^-)\}.$$
\end{elem}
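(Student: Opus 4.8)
The goal is to show that when $(E\supset V\subset F)$ is positively folded with $F\in C_V^-$, the set $\Gamma^+_{S_V}((E\supset V\subset F))$ consists of the single trivial gallery $\textbf{c}_0=(C_V^-)$. The first thing I would do is unwind the definitions. Since $F\in C_V^-$ and $F_f$ is by definition the face of type $k_2$ lying in the anti-dominant chamber $C_V^-$, we must have $F=F_f$, so the minimal Weyl group element $w_F$ sending $F_f$ to $F$ is the identity. Hence $l(F)=l(w_F)=0$, the reduced expression $s_{i_1}\cdots s_{i_l}$ of $w_F$ is empty (i.e.\ $l=0$), and the type $\textbf{i}$ is the empty sequence.

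**The length-zero argument.**

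Next I would invoke the very definition of $\Gamma^+_{S_V}((E\supset V\subset F),\textbf{i})$: it is the set of galleries of chambers $\textbf{c}=(C_V^-,C_1,\dots,C_l)$ of type $\textbf{i}$. With $l=0$ there is exactly one gallery of chambers of the empty type, namely the length-zero gallery $(C_V^-)$ consisting of the single chamber $C_V^-$ with no intervening walls. This is precisely $\textbf{c}_0$. Since $\Gamma^+_{S_V}((E\supset V\subset F))$ is defined as the disjoint union over all reduced expressions $\textbf{j}$ of $w_F$, and $w_F=\mathrm{id}$ has only the empty reduced expression, no further galleries enter the union.

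**Verifying the defining conditions for $\textbf{c}_0$.**

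It then remains to check that $\textbf{c}_0=(C_V^-)$ genuinely lies in $\Gamma^+_{S_V}((E\supset V\subset F))$, rather than the set being empty. The positive-folding condition is vacuous since $\textbf{c}_0$ has no foldings and no wall-crossings at all. The one real point to verify is the minimal-pair condition: the face contained in the last (and only) chamber $C_0=C_V^-$ having the same type as $F$ must form a minimal pair with $E$. But that face is exactly $F$ itself (as $F\in C_V^-$ and $F$ has type $k_2$), so the requirement is that $(E\supset V\subset F)$ be a minimal pair. Here is where I would use the hypothesis that $(E\supset V\subset F)$ is positively folded \emph{together with} $F\in C_V^-$: a positively folded 2-step gallery whose face $F$ already sits in the anti-dominant chamber cannot have been reached by any genuine positive folding (each positive folding strictly increases the length $l(F)$, by the construction in Theorem~\ref{konst1}, but $l(F)=0$ is already minimal), so it must itself be minimal.

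**Expected obstacle.**

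The main obstacle is the last verification: confirming that $F\in C_V^-$ forces $(E\supset V\subset F)$ to be a minimal pair. The cleanest way I see is the length/folding monotonicity just mentioned; since $\textbf{c}_0$ has no positive foldings and is the unique candidate, and since the positively folded hypothesis rules out $F$ arising from a nontrivial folding when its length is already zero, minimality follows and the inclusion $\{\textbf{c}_0\}\subseteq \Gamma^+_{S_V}((E\supset V\subset F))$ holds. Combined with the length-zero count above, this yields equality.
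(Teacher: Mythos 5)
Your proof is correct and takes essentially the same route as the paper's: since $F\in C_V^-$, the chamber containing $F$ closest to $C_V^-$ is $C_V^-$ itself, so $w_F=\mathrm{id}$ has length $0$ and the trivial gallery $(C_V^-)$ is the only candidate of the (empty) type. Your extra verification that $(C_V^-)$ actually belongs to the set --- no positive folding can separate $F$ from $C_V^-$ while $F\subset C_V^-$, so the positively-folded hypothesis forces $(E\supset V\subset F)$ to be minimal and the minimal-pair condition holds --- is a point the paper's one-line proof leaves implicit, and it is precisely where that hypothesis is used.
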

\begin{proof}
As $F$ is in $C_V^-$ it follows: The chamber that contains $F$ and is closest to $C_V^-$ is $C_V^-$ itself. Therefore the Weyl group element in $W_V^a$ that sends $C_V^-$ to this chamber is $id$ with $l(id)=0$. The claim follows.
\end{proof}
Now we can finally formulate the recurrence for $\Gamma^+_a(\textbf{k})$:
\begin{satz}{Recurrence for the galleries of chambers}\label{rec}
\\Let $(E \supset V \subset F)$ be a positively folded one-skeleton gallery of type $\textbf{k}$. Let $\textbf{c}$ be a gallery of chambers in $ \Gamma^+_{S_V}((E \supset V \subset F),\textbf{i}=(i_1,\dots, i_l)) \subset \Gamma^+_a(\textbf{k})$. Then there exists a unique $(E' \supset V \subset F') \in \Gamma^+(\textbf{k})$ with $F' \subset C_V^-$ and a unique chamber $S_V'$ with $S_V' \supset E'$ and a unique sequence $(p_1, \dots, p_l)$ with $p_m \in \{1,2\}$ for every $m \in \{1, \dots, l \}$ such that:
$$ \textbf{c} = f_{p_1}^{i_1}(f_{p_2}^{i_2}\dots f_{p_l}^{i_l}(\textbf{c}_0)\dots),$$
where $\textbf{c}_0 \in \Gamma^+_{S_V'}((E' \supset V \subset F')).$
\end{satz}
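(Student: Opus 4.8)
The plan is to argue by induction on $l=l(F)$, the length of the face $F$, peeling off the outermost map $f_{p_1}^{i_1}$ (equivalently, the first step of the gallery) at each stage. Note first that, by the definition of $\Gamma^+_{S_V}((E\supset V\subset F),\textbf{i})$, the tuple $\textbf{i}=(i_1,\dots,i_l)$ is a reduced expression of $w_F$, so the length of $\textbf{c}$ equals $l(w_F)=l(F)$. For the base case $l=0$ we have $w_F=\mathrm{id}$, hence $F=F_f\subset C_V^-$, and the preceding lemma gives $\Gamma^+_{S_V}((E\supset V\subset F))=\{(C_V^-)\}=\{\textbf{c}_0\}$. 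Thus $\textbf{c}=\textbf{c}_0$ with the empty sequence and $(E'\supset V\subset F',S_V')=(E\supset V\subset F,S_V)$; as there is nothing to peel, this is the unique decomposition.

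For the inductive step $l\geq 1$, write $j=i_1$ and $\textbf{c}=(C_V^-,C_1,\dots,C_l)$. The first pair $(C_V^-,C_1)$ is either a folding ($C_1=C_V^-$) or a crossing ($C_1=s_j(C_V^-)$, the unique type-$j$ neighbour of $C_V^-$), and which one occurs is read off directly from $\textbf{c}$; this forces the value of $p_1$. If it is a folding, positivity of $\textbf{c}$ forces it to be a \emph{positive} folding, i.e.\ $l(s_jw_{S_V})=l(w_{S_V})-1$, and I set $\textbf{c}'=(C_V^-,C_2,\dots,C_l)$; here the aim is to show $\textbf{c}'\in\Gamma^+_{S_V}((E\supset V\subset s_j(F)),(i_2,\dots,i_l))$ with $f_2^j(\textbf{c}')=\textbf{c}$, so $p_1=2$. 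If it is a crossing, I set $\textbf{c}'=(C_V^-,s_j(C_2),\dots,s_j(C_l))$, with the aim of showing $\textbf{c}'\in\Gamma^+_{s_j(S_V)}((s_j(E)\supset V\subset s_j(F)),(i_2,\dots,i_l))$ and $f_1^j(\textbf{c}')=\textbf{c}$, so $p_1=1$. In both cases the associated one-skeleton gallery has $F$-face $s_j(F)$ of length $l-1$, so the inductive hypothesis applies to $\textbf{c}'$ and produces $f_{p_2}^{i_2}(\cdots f_{p_l}^{i_l}(\textbf{c}_0)\cdots)$ together with the terminal triple $(E',F',S_V')$; prepending $f_{p_1}^{j}$ yields the decomposition of $\textbf{c}$. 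Uniqueness is immediate: at each stage $p_m$ and $i_m$ are determined by the current gallery, hence so are $\textbf{c}'$, the whole sequence, and the terminal triple.

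The crux of the verification is showing that $\textbf{c}'$ really lands in the asserted set, and in particular that $(i_2,\dots,i_l)$ is a reduced expression of $w_{s_j(F)}$ with $l(s_j(F))=l(F)-1$, i.e.\ that $w_{s_j(F)}=s_jw_F$. This rests on the Coxeter-theoretic fact that $w_F$, being the minimal-length representative of the coset $w_FW_{k_2}$ (where $W_{k_2}$ is the stabilizer of $F_f$), has the property that $s_jw_F$ is again a minimal coset representative whenever $s_jw_F<w_F$. Indeed, if $(s_jw_F)s<s_jw_F$ for some $s$ in the stabilizer, then $l(s_jw_Fs)=l(w_F)-2$, whereas minimality of $w_F$ gives $l(w_Fs)=l(w_F)+1$; since $s_j\cdot(w_Fs)=s_jw_Fs$, this violates $|l(s_jx)-l(x)|=1$ for $x=w_Fs$. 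Hence $w_{s_j(F)}=s_jw_F$ and $l(s_j(F))=l(F)-1$, so $(i_2,\dots,i_l)$ is reduced for $w_{s_j(F)}$ as required.

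The remaining requirements — the correct type of $\textbf{c}'$, positivity of all its foldings with respect to the new base chamber, and the minimal-pair condition for the last face — are precisely the verifications of Theorem~\ref{konst1} and of the analogous statement for $f_2^j$ read backwards, using that $s_j$ is an isometry of $\mathbb{A}_V$ preserving types and minimal pairs (for the crossing case) or that $E$ and the relevant walls are left unchanged (for the folding case). I expect the reduced-word bookkeeping of the previous paragraph to be the main obstacle; once $w_{s_j(F)}=s_jw_F$ is established, the geometric conditions transfer routinely and the induction closes.
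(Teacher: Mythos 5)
Your proof is correct and is essentially the paper's own argument: the paper proceeds by constructing the inverse maps $\tilde{f}_1^{j}$ and $\tilde{f}_2^{j}$ and peeling off the first step of the gallery (crossing or folding, as read off from $\textbf{c}$), which is exactly your induction on $l(F)$, with uniqueness in both cases coming from the fact that the shape of the gallery forces each $p_m$. Your explicit verification that $s_jw_F$ is again a minimal coset representative (so that $(i_2,\dots,i_l)$ is reduced for $w_{s_j(F)}$) is a detail the paper leaves implicit under ``with the same arguments as in the constructions,'' and it is a welcome addition rather than a deviation.
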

\begin{proof}
In order to prove the recurrence we define the inverse maps to $f_1^j$ and $f_2^j$. Recall that we can write $\Gamma_{S_V}^+((E \supset V \subset F))$ as the disjoint union of $\Gamma_{S_V}^f((E \supset V \subset F))$ and $\Gamma_{S_V}^c((E \supset V \subset F))$. If the crossing in the first position of the galleries is of type $i$ then we write $\Gamma_{S_V}^{c_i}((E \supset V \subset F))$ and if the folding in the first position is of type $i$ then we write $\Gamma_{S_V}^{f_i}((E \supset V \subset F))$. Further recall the two constructions:\\
Let $s_j$ be a simple reflection in $W_V^a$ with $l(s_j(F))= l(F) + 1$. We get:
\begin{align*} \nonumber
 f_1^j \; : \; &\Gamma^+_{S_V}((E \supset V \subset F), \textbf{i} ) &\longrightarrow  \; \;\;\;\;\;\;\;&\Gamma^{c_j}_{s_j(S_V)}((s_j(E) \supset V \subset s_j(F)), (j,\textbf{i})) \\
 &\textbf{c}=(C_{V}^-,C_1, \dots, C_l) &\longmapsto  \; \;\;\;\;\;\;\;&\textbf{c}_1= (C_{V}^-, s_j(C_{V}^-),s_j(C_1), \dots , s_j(C_l)).\\
 \intertext{If additionally $l(s_jw_{S_V})=l(w_{S_V})-1 $ holds  for $j$ then we get:}
 f_2^j \; : \; &\Gamma^+_{S_V}((E \supset V \subset F), \textbf{i} ) &\longrightarrow  \; \;\;\;\;\;\;\;&\Gamma^{f_j}_{S_V}((E \supset V \subset s_j(F)),(j,\textbf{i})) \\
&\textbf{c}=(C_{V}^-,C_1, \dots, C_l) &\longmapsto  \; \;\;\;\;\;\;\; &\textbf{c}_2= (C_{V}^-, C_{V}^-,C_1, \dots , C_l).\\
\intertext{Now we want to construct the inverse maps for these. Let $(E \supset V \subset F)$ be a positively folded gallery with $F \not\subset C_V^-$. Define}
 \tilde{f}_1^j \; : \; &\Gamma^{c_j}_{S_V}((E \supset V \subset F),(j,\textbf{i})) &\longrightarrow \; \; \; \; \; \; \; \; &\Gamma^+_{s_j(S_V)}((s_j(E) \supset V \subset s_j(F)), \textbf{i})  \\
&\textbf{c}=(C_{V}^-,C_1, \dots, C_l) &\longmapsto \;\;\;\; \; \; \; \; &\widetilde{\textbf{c}}_1= (s_j(C_1)=C_{V}^-, \dots , s_j(C_l)).\\
\intertext{Further define:} 
\tilde{f}_2^j \; : \; &\Gamma^{f_j}_{S_V}((E \supset V \subset F),(j,\textbf{i})) &\longrightarrow \; \; \; \; \; \; \; \; &\Gamma^+_{S_V}((E \supset V \subset s_j(F)), \textbf{i}) \\ 
&\textbf{c}=(C_{V}^-,C_1=C_{V}^-, \dots, C_l) &\longmapsto \;\;\; \; \; \; \; \; &\widetilde{\textbf{c}}_2= (C_1=C_{V}^-, \dots , C_l).
\end{align*}
With the same arguments as in the constructions of $f_1^j$ and $f_2^j$ we get that $\tilde{f}_1^j$ and $\tilde{f}_2^j$ are defined in a proper way i.e. that $\widetilde{\textbf{c}_1} \in \Gamma^+_{s_j(S_V)}((s_j(E) \supset V \subset s_j(F)),(\textbf{i}))$ and that $\widetilde{\textbf{c}}_2 \in \Gamma^+_{S_V}((E \supset V \subset s_j(F)),\textbf{i})$ and that $(s_j(E) \supset V \subset s_j(F))$ and $(E \supset V \subset s_j(F))$ are positively folded.\\
It is easy to check that these maps are the inverse maps to $f^j_1$ and $f^j_2$.\\
We now want to prove the statement.  Let $\textbf{c}$ be a gallery of chambers in $ \Gamma^+_{S_V}((E \supset V \subset F),\textbf{i}) \subset \Gamma^+_a(\textbf{k})$. Depending on whether the first step is a crossing or a folding we can apply either $\tilde{f}_1^{i_1}$ or $\tilde{f}_2^{i_1}$. Now we take the resulting gallery of chambers and check again which one of the two maps $\tilde{f}_1^{i_2}$ and $\tilde{f}_2^{i_2}$ can be applied and so one. In other words, how to reduce a given gallery of chambers step by step is determined by the shape of the gallery itself. Therefore there exists a unique $(E' \supset V \subset F') \in \Gamma^+(\textbf{k})$ with $F' \subset C_V^-$ and a unique chamber $S_V'$ with $S_V' \supset E'$ and $S_V' \supset V$ and a unique sequence $(p_1, \dots, p_l)$ with $p_m \in \{1,2\}$ for every $m \in \{1, \dots, l \}$ such that
$$ \textbf{c} = f_{p_1}^{i_1}(f_{p_2}^{i_2}\dots f_{p_l}^{i_l}(\textbf{c}_0)\dots),$$
where $\textbf{c}_0 \in \Gamma^+_{S_V'}((E' \supset V \subset F')).$ 
\end{proof}
\subsection{Type $A_n$}
Let $\mathbb{A}$ be the standard apartment of the affine building of type $A_n$. The coweight lattice $X\check{}$ can be identified with $\mathbb{Z}^{n+1}/(1,\dots,1)$ and we identify the weight lattice $X$ with $X\check{}$ using the standard inner product on $\mathbb{Z}^{n+1}$. The simple coroots are $\alpha_i\check{}= \epsilon_i-\epsilon_{i+1}$ for $i \in\{1,\dots,n\}$ where $\epsilon_i$ is the $i$th unit vector of $\mathbb{Z}^{n+1}$, the simple roots $\alpha_i$ coincide with the simple coroots $\alpha_i\check{}$. The positive roots are $\epsilon_i- \epsilon_j$ with $i < j$ and $\rho= 1/2 \sum{\text{positive roots}}= 1/2(n)\epsilon_1+1/2(n-2)\epsilon_2+ \dots+ 1/2(-n)\epsilon_{n+1}$. For the fundamental coweights we choose the Bourbaki enumeration i.e. $\omega_i = \epsilon_1 +\dots +\epsilon_i$ for $i \in \{1, \dots, n\}$ \cite{B}. The combinatorial one-skeleton gallery associated to a fundamental coweight $\omega_i$ coincides with the minuscule one-skeleton gallery in $\omega_i$-direction i.e. $\delta_{\omega_i}=\delta_{E_{\omega_i}}=(\mathfrak{o}\subset E_{\omega_i} \supset \omega_i)$ for $i \in \{1, \dots,n\}$ since all fundamental coweights are minuscule for type $A_n$. The combinatorial one-skeleton galleries of the same type as $\omega_i$ are $\delta_{\tau(E_{\omega_i})}=(\mathfrak{o} \subset \tau(E_{\omega_i}) \supset \tau(\omega_i))$ with $\tau\in W/W_{\omega_i}$.
A dominant coweight $\lambda= \lambda_1 \epsilon_1 + \dots \lambda_{n} \epsilon_{n}$ is given by a weakly decreasing sequence $\lambda_1\geq\dots\geq \lambda_{n}\geq 0$. Recall that $\delta_{\lambda}$ denotes the combinatorial one-skeleton gallery $\delta_{a_1\omega_1}*\dots*\delta_{a_n\omega_n}$ where $\lambda= \sum{a_i\omega_i}$. In this section we restrict ourselves to combinatorial one-skeleton galleries $\delta=\delta_{w_0(E_{\omega_{i_0}})}*\cdots*\delta_{w_r(E_{\omega_{i_r}})}$ with $i_0\leq\dots\leq i_r$. Note that for every one-skeleton gallery $\delta$ of type $A_n$ there exists a dominant coweight $\lambda$ such that $t(\delta)=t(\delta_{\lambda})$. The Weyl group $W$ is the symmetric group $S_{n+1}$. Consider the action of $W$ on the coweights $X\check{}$. Let $\lambda \in X\check{}$ be given in the basis $\{\epsilon_1, \dots, \epsilon_{n+1}\}$. Then applying the simple reflection $s_i$ to $\lambda$ interchanges the coefficient of $\epsilon_i$ with the coefficient of $\epsilon_{i+1}$ and fixes the coefficient of $\epsilon_j$ for $j  \notin \{i,i+1\}$ for every $i \in \{1,\dots,n\}$.\\
Since every vertex $V$ is special the Weyl group $W_V^v$ coincides with $W$.
\subsubsection{Young tableau of type $A_n$}
Let $\lambda=\lambda_1\epsilon_1+\dots+ \lambda_{n}\epsilon_n$ be a dominant coweight and let $r$ be the smallest index with $\lambda_{r+1}=0 $. We associate to $\lambda$ a diagram consisting of $r$ left-aligned rows where the $i$th row consists of $\lambda_i$ boxes (from top to bottom). In the following we denote the diagram by \textbf{dg($\lambda$)}.\\
\textbf{Example.}\\
For $\lambda=3\epsilon_1+3\epsilon_2+2\epsilon_3+1\epsilon_4$ we obtain
$$ dg(\lambda)=\Yvcentermath1 \yng(3,3,2,1) \; .$$
A \textbf{Young tableau} $T$ of type $A_n$ of \textbf{shape} $\lambda$ is the diagram dg($\lambda$) where each box is filled with a number in $\{1,\dots,n+1\}$ such that the entries are strictly increasing in the columns (from top to bottom).
The coweight $\mu=\mu_1\epsilon_1+\dots+ \mu_{n+1}\epsilon_{n+1}$, where $\mu_i$ is the number of boxes in the diagram in which an $i$ is inserted is called the \textbf{content} of the Young tableau.\\
\textbf{Example.}\\
The Young tableau
$$ \young(121,232,34,4)$$
has shape $\lambda$ as in the first example and content $\mu=2\epsilon_1+3\epsilon_2+2\epsilon_3+2\epsilon_4$.\\ \\
Let $T$ be a Young tableau and $C_i$ denotes the $i$th column of $T$ for $i \in \{0,\dots,l\}$ (from left to right).
We call $T$ \textbf{minimal} if all entries of $C_i$ also appear in $C_{i-1}$ for $i \in \{1,\dots,l\}$.\\
\textbf{Example.}\\
$$ T= \Yvcentermath1 \young(113,234,34,4)\; .$$\\
A Young tableau $T$ is called \textbf{semistandard} if the entries are weakly increasing from left to right in the rows.\\
\textbf{Example.}\\
$$ T= \Yvcentermath1 \young(122,234,35,4)\; .$$\\
Note that minimal Young tableaux are always semistandard. Let SSYT($\lambda, \mu$) denote the set of all semistandard Young tableau with shape $\lambda$ and content $\mu$.\\ 
We can identify one-skeleton galleries in $\mathbb{A}$ with Young tableaux of type $A_n$ as follows:\\
Let $\delta = \delta_{w_0(E_{\omega_{i_0}})}*\dots*\delta_{w_r(E_{\omega_{i_r}})} $ be a one-skeleton gallery. Let $w_k(\omega_{i_k})= \epsilon_{k_1}+\dots+\epsilon_{k_j}$ for $k \in \{0,\dots, r\}$. Then we associate to $\delta_{w_k(E_{\omega_{i_k}})}$ the column $C_{r-k}$ consisting of $j$ boxes filled with the numbers $k_1< \dots < k_j$ in increasing order from top to bottom. The Young tableau $T_{\delta}=(C_{0},\dots,C_{r})$ that we associate to the one-skeleton gallery $\delta$ is the tableau that we obtain by putting the columns next to each other aligned at the top. This tableau has shape $\lambda=\sum_{j=0}^r\omega_{i_j}$.\\
\textbf{Example.}\\
For  $\delta=(\delta_{s_1s_3s_2(E_{\omega_2})}*\delta_{s_1s_2s_4s_3(E_{\omega_3})}*\delta_{E_{\omega_4}})$ the associated Young tableau is
$$ T_{\delta}= \Yvcentermath1 \young(122,234,35,4)\; .$$\\
This assignment is clearly a bijection between the set of Young tableaux of type $A_n$ of shape $\lambda$ and the combinatorial one-skeleton galleries of type $A_n$ of type $\lambda$ for a dominant coweight $\lambda$ starting in the origin $\mathfrak{o}$. Thus for a Young tableau $T=(C_0, \dots,C_r)$, the one-skeleton gallery $\delta_T=(\mathfrak{o} \subset E_0 \supset V_1 \subset  \dots \supset V_{r+1})$ denotes the associated combinatorial one-skeleton gallery where $E_{r-j}$ is the edge corresponding to the column $C_j$ for $j \in \{0,\dots, r\}$. A 2-step one-skeleton gallery $(E \supset V \subset F)$ becomes a 2-column Young tableau, a Young tableau consisting of only 2 columns, at vertex $V$.\\ \\
Now we can investigate how being minimal and positively folded for a one-skeleton gallery translates in terms of Young tableaux: 
\begin{satz}
This assignment defines a bijection between the set of all minimal Young tableaux of shape $\lambda$ and the set of all minimal combinatorial one-skeleton galleries of type $\lambda$ starting in the origin $\mathfrak{o}$. Further, it also defines a bijection between the set of all semistandard Young tableaux of shape $\lambda$ and the set of all positively folded combinatorial one-skeleton galleries of type $\lambda$ starting in the origin $\mathfrak{o}$. 
\end{satz}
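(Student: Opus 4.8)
The plan is to use the bijection $T\mapsto\delta_T$ between all Young tableaux of shape $\lambda$ and all combinatorial one-skeleton galleries of type $\lambda$ starting in $\mathfrak{o}$, established just above, and to show that it restricts to the two asserted bijections. Writing $T=(C_0,\dots,C_r)$ and $\delta_T=(\mathfrak{o}=V_0\subset E_0\supset\dots\supset V_{r+1})$ with $E_{r-j}$ the edge of $C_j$, it suffices to prove that $\delta_T$ is minimal if and only if $T$ is minimal, and that $\delta_T$ is positively folded if and only if $T$ is semistandard. My first step is to observe that all of these conditions are local. By Theorem \ref{globloc} a combinatorial one-skeleton gallery of type $\lambda$ is minimal (resp.\ positively folded) exactly when it is locally minimal (resp.\ locally positively folded), that is, when each $2$-step gallery $(E_{i-1}\supset V_i\subset E_i)$ is minimal (resp.\ positively folded); minimality and semistandardness of $T$ are likewise conditions on each pair of adjacent columns. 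Under the bijection the $2$-step gallery at $V_i$ is precisely the two-column tableau with left column $C_{r-i}$, coming from the outgoing edge $F:=E_i$, and right column $C_{r-i+1}$, coming from the incoming edge $E:=E_{i-1}$. Hence everything reduces to a single statement about one $2$-step gallery $(E\supset V\subset F)$ and its two-column tableau.

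For such a $2$-step gallery I work in the residue apartment $\mathbb{A}_V$. Since all fundamental coweights of type $A_n$ are minuscule, every occurring vertex is a coweight and hence special, so $\mathbb{A}_V$ is the spherical apartment of $W=S_{n+1}$ and $C_V^-$ is the standard anti-dominant chamber $\{x_1\le\dots\le x_{n+1}\}$. After translating $V$ to the origin the edge $E$ points in direction $-v_P$ and $F$ in direction $v_Q$, where $v_A:=\sum_{i\in A}\epsilon_i$ and $P$, $Q$ are the entry sets of the right and left columns, with $s:=|P|\le|Q|=:t$ because the left column is at least as long as the right one. For minimality I would note that $(E\supset V\subset F)$ is minimal exactly when some sector $S=V+w(C^+)$ satisfies $F\in S$ and $E\in-S$, i.e.\ when $v_P$ and $v_Q$ lie in one common closed Weyl chamber $w(C^+)$. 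A $0/1$-vector $v_A$ lies in the closed chamber $\{x_{\sigma(1)}\ge\dots\ge x_{\sigma(n+1)}\}$ iff $\sigma$ lists the elements of $A$ before those of its complement, so $v_P$ and $v_Q$ share a chamber iff some total order lists both $P$ before $P^c$ and $Q$ before $Q^c$, which happens iff $P$ and $Q$ are nested. As $s\le t$ this means $P\subseteq Q$, i.e.\ every entry of $E$ occurs in $F$; this is exactly minimality of the two-column tableau, which gives the first bijection.

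For positive foldedness I would first read off what a single positive folding does. A reflection through $V$ is an $s_{\epsilon_a-\epsilon_b}$, which changes the support of $F$ only if exactly one of $a,b$ lies in it; comparing the sides of $H_{(\epsilon_a-\epsilon_b,0)}$ with $C_V^-$ shows that a positive folding is exactly the move that replaces a present index by a smaller absent one. Thus $(E\supset V\subset F)$ is positively folded iff the support $Q$ is reachable from some size-$t$ set $Q_0\supseteq P$ (a minimal starting position, by the minimality analysis above) through a chain of such down-moves. Each down-move weakly decreases the sorted support tuple, and conversely every set lying below a given one in the componentwise Gale order is reachable; taking the largest admissible start $Q_0^{\max}=P\cup\{\text{the top }t-s\text{ indices not in }P\}$, positive foldedness is therefore equivalent to $Q\le Q_0^{\max}$, i.e.\ to $|Q\cap[1,m]|\ge|Q_0^{\max}\cap[1,m]|$ for every $m$. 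A direct count gives $|Q_0^{\max}\cap[1,m]|=\max(|P\cap[1,m]|,\,m-(n+1-t))$, whereas any size-$t$ subset automatically satisfies $|Q\cap[1,m]|\ge m-(n+1-t)$; the inequalities therefore collapse to $|Q\cap[1,m]|\ge|P\cap[1,m]|$ for all $m$, which for $Q=\{f_1<\dots<f_t\}$ and $P=\{e_1<\dots<e_s\}$ is equivalent to $f_k\le e_k$ for $k=1,\dots,s$, i.e.\ semistandardness of the two-column tableau.

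The step I expect to be the main obstacle is the positive-folding analysis: pinning down the orientation of $C_V^-$ so that the geometric positivity condition becomes the combinatorial down-move, checking that down-moves realize the entire Gale order, and performing the count in which the apparent dependence on $n$ cancels, so that the weaker-looking semistandard inequalities $f_k\le e_k$ (only for $k\le s$) already force $Q\le Q_0^{\max}$. The minimality argument and the reduction to adjacent columns should be routine by comparison.
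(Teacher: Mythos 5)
Your proof is correct, and while it shares the paper's overall skeleton, the local arguments are genuinely different. Like the paper, you invoke Theorem \ref{globloc} to reduce both equivalences to a single $2$-step gallery $(E\supset V\subset F)$ versus its $2$-column tableau (the paper's proof opens with exactly this reduction). From there the paper works with minuscule presentations: for minimality it writes the gallery as $\delta_{w(E_{\omega_{i_1}})}*\delta_{w(E_{\omega_{i_2}})}$ with a common $w$ and computes $w(\omega_{i_2})=w(\omega_{i_1})+w(\epsilon_{i_1+1}+\dots+\epsilon_{i_2})$ to get containment of entry sets, with the converse via an explicit algorithmic construction of the minimal $w$; for positive foldedness it analyzes how a folding element $w_2$ permutes coefficients of the $\epsilon_i$. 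You instead translate everything into supports: minimality becomes the existence of a common closed chamber containing $v_P$ and $v_Q$, hence nestedness $P\subseteq Q$ (a clean, coordinate-free characterization the paper never states), and positive foldedness becomes reachability in the Gale order from a Gale-maximal minimal start $Q_0^{\max}$, collapsing to $f_k\le e_k$. Your route buys two things: the chamber/nestedness argument replaces the paper's somewhat delicate (and, as printed, sign-garbled) Weyl-element bookkeeping, and your order-theoretic treatment handles an arbitrary chain of positive foldings, whereas the paper's wording only explicitly treats a single folding from a minimal pair; the paper's approach, in exchange, produces the explicit minuscule presentation and minimal Weyl element, which are used elsewhere in the article. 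Two lemmas you lean on --- that single down-moves generate the full Gale order below a given set, and that $Q_0^{\max}=P\cup\{\text{top } t-s \text{ indices not in } P\}$ dominates every $t$-set containing $P$ --- are standard and each provable in a few lines (e.g.\ by the counting form $\vert Q\cap[1,m]\vert\ge\vert Q_0\cap[1,m]\vert$ of the order), so they are genuine but minor writing obligations rather than gaps; your cancellation $\vert Q\cap[1,m]\vert\ge m-(n+1-t)$ for any $t$-set, which removes the apparent $n$-dependence, is also correct.
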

\begin{rem}
The proof that semistandard Young tableaux of type $A_n$ and positively folded combinatorial one-skeleton galleries of type $A_n$ are the same can be found in \cite{LMS} and in \cite{GL1}. For the convenience of the reader we also give a detailed proof.
\end{rem}
\begin{proof}
Because of Theorem \ref{globloc} it suffices to show that the set of all minimal 2-step one-skeleton galleries starting in the origin $\mathfrak{o}$ is in bijection with the set of all minimal 2-column Young tableaux and that the set of all positively folded 2-step one-skeleton galleries  starting in the origin $\mathfrak{o}$ is in bijection with all semistandard 2-column Young tableaux:\\
We begin with minimality: Let $(E \supset V \subset F)$ be a minimal 2-step one-skeleton gallery in $\mathbb{A}$. We can assume that $V$ is a special vertex because we are in type $A_n$. Because $(E \supset V \subset F)$ is minimal we know that there exist fundamental coweights $\omega_{i_1}$ and $\omega_{i_2}$ with $i_1 \leq i_2$ and a minimal Weyl group element $w \in W$ such that the minuscule presentation of $(E \supset V \subset F)$ is $\delta_{w(E_{\omega_{i_1}})}*\delta_{w(E_{\omega_{i_2}} )}$. Consider
$$ w(\omega_{i_2})=w(\omega_{i_1}+\epsilon_{i_1+1}+\dots+\epsilon_{i_2})= w(\omega_{i_1} ) +w(\epsilon_{i_1+1}+\dots+\epsilon_{i_2}).$$
Thus the set of all entries in the column $C_{F}$ associated to $F$ contains the set of all entries of the column $C_{E}$ associated to $E$.\\
Conversely, let $T$ be a minimal 2-column Young tableau where $C_1$ denotes the first and $C_2$ the second column of $T$. Let $l_1$ denote the number of boxes in $C_1$ and $l_2$ the number of boxes in $C_2$. Let $(E \supset V \subset F)$ be the associated 2-step one-skeleton gallery. We now give an explicit construction of the minimal Weyl group element $w \in W$ such that $\delta_{w(E_{\omega_{l_1}})}*\delta_{w(E_{\omega_{l_2}})}$ is the minuscule presentation of $(E \supset V \subset F)$: Without loss of generality we assume that $l_1 > l_2$. Because if $l_1=l_2$ then the set of entries is the same in both columns. Let $\{i_1, \dots, i_{l_1}\}$ be the set of entries in $C_1$ and $\{j_1, \dots, j_{l_2}\}$ the set of entries in $C_2$. If $i_{l_1}$ is not an entry of $C_2$ then $s_{i_{l_1}-1}\cdots s_{l_1}$ does not change $\omega_{l_2}$ but $s_{i_{l_1}-1}\cdots s_{l_1}(\omega_{l_1})= \omega_{l_1}-\epsilon_{l_1}+\epsilon_{i_{l_1}}$. Go on like this until $i_m=j_{l_2}$. Now applying $s_{j_{l_2}-1}\cdots s_{l_2}$ to $s_{i_{l_1}-1}\cdots s_{l_1}(\omega_{l_1})$ does not change anything and $\omega_{l_2}$ becomes $\omega_{l_2}-\epsilon_{l_2}+\epsilon_{j_{l_2}}$. By iterating these steps we derive the Weyl group element $w \in W$ with the desired properties.\\
Now consider a 2-step one-skeleton gallery $(E \supset V \subset F)$ that is positively folded. Without loss of generality we assume that $(E \supset V \subset F)$ is not minimal. Then there exists a minimal 2-step one-skeleton gallery $(E \supset V \subset F')$ such that we obtain $F$ from $F'$ by a positive folding. This means that there exist fundamental coweights $\omega_{i_1}$ and $\omega_{i_2}$ with $i_1 \geq i_2$ and Weyl group elements $w_1$ and $w_2$  such that $\delta_{w_1(E_{\omega_{i_1}})}*\delta_{w_1(E_{\omega_{i_2}})}$ is the minuscule presentation of $(E \supset V \subset F')$ and that $\delta_{w_1(E_{\omega_{i_1}})}*\delta_{w_2w_1(E_{\omega_{i_2}})}$ is the minuscule presentation of $(E \supset V \subset F)$. In type $A_n$ we can conclude that $w_2$ interchanges the coefficient of some $\epsilon_i$ with the coefficient of some $\epsilon_j$ with $j > i$. Let $w_2 \in W$ be the minimal Weyl group element with this property. Consider
$$ w_2w_1(\omega_{i_2})=  w_2(w_1(\omega_1))+w_2w_1(\epsilon_{{i_1}+1}+ \dots + \epsilon_{i_2}).$$
Because $w_2$ is minimal it interchanges only the coefficient of some $\epsilon_i$ with the coefficient of some $\epsilon_j$ with $j < i$. The desired property follows.\\
Conversely, let $T$ be a semistandard Young tableau where $C_1$ denotes the first and $C_2$ denotes the second column. Let $(E \supset V \subset F)$ be the associated 2-step one-skeleton gallery and $\delta_{w_1(\omega_{i_1})}*\delta_{w_2(\omega_{i_2})}$ the minuscule presentation. Let $w\in W$ be the minimal Weyl group element such that the Young tableau associated to the 2-step one-skeleton gallery $(E \supset V \subset w(F)=F')$ is minimal. Then $\delta_{w_1(E_{\omega_{i_1}})}*\delta_{w(w_2(E_{\omega_{i_2}}))}$ is the minuscule presentation of $(E \supset V \subset F')$. Because $T$ is semistandard and $w$ is minimal, applying $w$ to $F$ means changing the coefficient of $\epsilon_i$ with the coefficient of $\epsilon_j$ with $j>i$ in $w_2(\omega_{i_2})$ to obtain $w(w_2(\omega_{i_2}))$. Thus we know that applying $w^{-1}$ to $F'$ changes the coefficient of $\epsilon_i$ with the coefficient of $\epsilon_j$ with $j < i$. The claim follows.
\end{proof}
We now translate the notion of reflections on Young tableaux. Let $C$ be a column of a Young tableau of type $A_n$ with entries $\{i_1 < \dots < i_l\}$ and let $s_k \in W$ be a simple reflection. Then $s_k(C)$ is defined to be the column with entries $\{j_1 < \dots < j_l\}$  where $s_k(\epsilon_{i_1}+\dots+ \epsilon_{i_l})=\epsilon_{j_1}+\dots +\epsilon_{j_l}$. More precisely, a simple reflection $s_k$ interchanges the entries $k$ and $k+1$ for $k \in \{1, \dots, n\}$.\\
\subsection{Type $B_n$}
Let $\mathbb{A}$ be the standard apartment of the affine building of type $B_n$. The simple coroots are $\alpha_i\check{}=\epsilon_i-\epsilon_{i+1}$ for $i= 1, \dots, n-1$ and $\alpha_n\check{}=\epsilon_n$ where $\epsilon_i$ is the $i$th unit vector of $\mathbb{Z}^n$. For the fundamental coweights $\omega_i$ we choose Bourbaki enumeration i.e. $\omega_i=\epsilon_1+\dots+\epsilon_i$ for $i=1,\dots, n-1$ and $\omega_n=1/2(\epsilon_1+\dots+\epsilon_n)$ \cite{B}. The combinatorial one-skeleton galleries associated to the fundamental coweights are as follows: For the minuscule fundamental coweight $\omega_n$ we get $ \delta_{\omega_n}=\delta_{E_{\omega_n}}=(\mathfrak{o} \subset E_{\omega_n} \supset \omega_n)$. Since we are in type $B_n$ we have $\left\langle \omega_i, \beta \right\rangle \leq 2$ for all positive roots $\beta$. The fundamental coweights $\omega_i$  with $i\neq n$ are not minuscule, so there exists at least one positive root $\beta$ with $\left\langle \omega_i, \beta \right\rangle= 2$. Hence, $\delta_{\omega_i}=(\mathfrak{o} \subset E_0 \supset V \subset E_1 \supset \omega_i)$ where $E_0=E_{\omega_i}=\{t\omega_i \; \vert \; t \in \left[0,\frac{1}{2}\right]\}$, $ V=V_{\omega_i}=\frac{1}{2}\omega_i$ and $E_1=\{t\omega_i \; \vert \; t \in \left[\frac{1}{2},1\right]\}$. The combinatorial one-skeleton galleries of type $\omega_n$ are $(\mathfrak{o} \subset \tau(E_{\omega_n}) \supset \tau(\omega_n))$ with $\tau \in W$. For a non-minuscule fundamental coweight $\omega_i$ the fundamental one-skeleton galleries of type $\omega_i$ are $(\mathfrak{o} \subset \tau(E_0) \supset \frac{\tau(\omega_i)}{2} \subset \tau\sigma(E_0)+\frac{1}{2}\tau(\omega_i) \supset \frac{\tau(\omega_i)+ \tau\sigma(\omega_i)}{2})$ where $\tau \in W$ and $\sigma \in W_V^a$. Let $\lambda= \sum_{i=1}^n a_i \omega_i$ be a dominant coweight. Recall that $\delta_{\lambda}$ denotes the combinatorial one-skeleton gallery $\delta_{\lambda}=\delta_{a_1\omega_1}*\cdots*\delta_{a_n\omega_n}$. In this section we restrict ourselves to one-skeleton galleries $\delta=\delta_{w_0(E_{\omega_{i_0}})}*\cdots*\delta_{w_r(E_{\omega_{i_r}})}$ with $i_0\leq\dots\leq i_r$. Consider the action of the Weyl group $W$ on the coweights $X\check{}$: Let $\lambda$ be a coweight given in the basis $\{\epsilon_1, \dots, \epsilon_n\}$. Applying the simple reflection $s_i$ for $i=1, \dots, n-1$ to $\lambda$ interchanges the coefficient of $\epsilon_i$ and the coefficient of $\epsilon_{i+1}$ and fixes all other coefficients. Applying the simple reflection $s_n$ changes the sign of the coefficient of $\epsilon_n$ and fixes all other coefficients.\\
We also need to take a closer look at the Weyl group $W_V^v$ for some vertex $V$ in $\mathbb{A}$. It is sufficient to investigate the Weyl group $W_V^v$ where $V=\frac{1}{2}(\epsilon_{i_1}+\dots+\epsilon_{i_l})$ where $i_1 < \dots < i_l$ and $l < n$ and where $V=V_{\omega_n}$ since we obtain all other vertices in $\mathbb{A}$ by translating these vertices at coweights. For every vertex $V=\frac{1}{2}(\epsilon_{i_1}+\dots+\epsilon_{i_l})$ where $i_1 < \dots < i_l$ and $l < n$ in $\mathbb{A}$ the fundamental vertex $V_{\omega_l}$ is of the same type as $V$ such that the Weyl group $W_{V_{\omega_l}}^v$ and the Weyl group $W_V^v$ are isomorphic. More precisely, there exists a minimal element $\sigma \in \left\langle s_1,\dots, s_{n-1} \right\rangle\cong S^n$  such that $\sigma( \omega_l)=\frac{1}{2}(\epsilon_{i_1}+\dots +\epsilon_{i_l})$. Let $\{\beta_1, \dots, \beta_k\}$ be the set of simple roots for $\phi_{V_{\omega_l}}$ such that $C_{V_{\omega_l}}^-$ is the anti-dominant chamber of $\mathbb{A}_{V_{\omega_l}}$. Then $\{\sigma(\alpha_1),\dots,\sigma(\alpha_k)\}$ is the set of simple roots for $\phi_{V}$ such that $C_V^-$ is the anti-dominant chamber of $\mathbb{A}_V$. The simple reflections of $W_{\sigma(V_{\omega_l})}^v$ are $s_{\sigma({\alpha_i})}=\sigma  s_i \sigma^{-1}$ for $i \in \{1, \dots, k\}$. Because of this fact we have a bijection between the root system $\phi_{V}$ and $\phi_{V_{\omega_l}}$ in the following way: We identify the linearly ordered set $\{i_1<\dots< i_l\}$ with the linearly ordered set $\{1 <\dots < l\}$. Consequently, the description of the Weyl group $W_V^v$ reduces to the case where $V= V_{\omega_l}$ for $l=1, \dots, n$:\\
For the minuscule fundamental coweight $\omega_n$ the vertex $V_{\omega_n}$ is special and $W_{V_{\omega_n}}^v$ coincides with $W$. Now consider a non-minuscule fundamental coweight $\omega_i$ with $i\neq n$. The set of simple coroots such that $C_{V_{\omega_i}}^-$ is the anti-dominant chamber of the standard apartment of the residue building at $V_{\omega_i}$ is $\{\alpha_{i_0}\check{}=\epsilon_i, \alpha_1\check{},\dots, \alpha_{i-1}\check{}, \alpha_{i+1}\check{},\dots, \alpha_n\check{}\}.$ Let $\lambda$ be a coweight given in the basis $\{\epsilon_1, \dots, \epsilon_n\}$. Applying the simple reflection $s_{i_0}=s_{\alpha_{i_0}\check{}}$  to $\lambda$ changes the sign of the coefficient of $\epsilon_i$ and does not change any other coefficient.\\
\subsubsection{Young tableaux of type $B_n$}
Let $\lambda=a_1\omega_1+\dots+a_n\omega_n$ be a dominant coweight and define $p=(p_1, \dots,p_n)$ with $p_i=2a_i+\dots+ 2a_{n-1}+a_n$. Let $r$ be the smallest index with $p_{r+1}=0$. We associate to $\lambda$ a diagram consisting of $r$ left-aligned rows where the $i$th row consists of $p_i$ boxes (from top to bottom). In the following we denote the diagram by $dg(\lambda)$.\\
\textbf{Example}\\
For $n=3$ and $\lambda=\omega_1+\omega_2+\omega_3$ we obtain
$$ dg(\lambda)=\Yvcentermath1 \yng(5,3,1) \; .$$
A \textbf{Young tableau} $T$ of type $B_n$ of \textbf{shape} $\lambda$ is the diagram $dg(\lambda)$ where each box is filled with a letter of the linearly ordered alphabet $\{1 < \dots n < \bar{n} < \dots < \bar{1}\}$ such that the entries are strictly increasing in the columns and that never $i$ and $\bar{i}$ are in the same column. Additionally it holds that for each pair of columns $(C_{2j-1},C_{2j})$ with $j=1, \dots, a_1+\dots a_{n-1}$  either $C_{2j-1}=C_{2_j}$ or we obtain $C_{2j}$ from $C_{2j-1}$ by exchanging some entries $k$ by $\bar{k}$ and some $\bar{l}$ by some $l$ with $k,l \in \{1,\dots,n\}$. The coweight $\mu=\mu_1\epsilon_1+\dots\mu_n\epsilon_n$, where $2\mu_i=$ number of boxes with entry $i$ $-$ number of boxes with entry $\bar{i}$ is called the \textbf{content} of the Young tableau $T$.\\
\textbf{Example}\\
The Young tableau
$$ \young(11{{\bar{2}}}3{{\bar{3}}},{{\bar{3}}}2{{\bar{1}}},{{\bar{2}}})$$
has shape $\lambda$ and content $\mu=\frac{1}{2}(\epsilon_1-\epsilon_2-\epsilon_3)$.\\
Let $T$ be a Young tableau and $C_i$ denotes the $i$th column of $T$ for $i \in \{0,\dots,l\}$ (from left to right).
We call $T$ \textbf{minimal} if all entries of $C_i$ also appear in $C_{i-1}$ for $i \in \{1,\dots,l\}$.\\
A Young tableau $T$ is called \textbf{semistandard} if the entries are weakly increasing from left to right in the rows. Let SSYT($\lambda,\mu$) denote the set of all semistandard Young tableaux with shape $\lambda$ and content $\mu$.\\ 
We can identify one-skeleton galleries in $\mathbb{A}$ with type $\lambda$ for some dominant coweight $\lambda$ starting at the origin $\mathfrak{o}$ with Young tableaux of type $B_n$ of shape $\lambda$ as follows:\\
Let $\delta=\delta_{w_0(E_{\omega_{i_0}})}*\cdots*\delta_{w_r(E_{\omega_{i_r}})}$ be a one-skeleton gallery of type $\lambda= \frac{1}{2}(a_1\omega_1+ \dots + a_{n-1}\omega_{n-1})+a_n\omega_n$ where $a_k$ is the number of indices $j$ such that $E_{\omega_{i_j}}=E_{\omega_k}$. Define $\epsilon_{\bar{i}}$ to be $-\epsilon_i$.  Let $w_k(\omega_{i_k})= \epsilon_{k_1}+\dots +\epsilon_{k_j}$ for $i_k \neq n$ and $w_k(\omega_{i_k})=\frac{1}{2}(\epsilon_{k_1}+\dots +\epsilon_{k_j})$ for $i_k=n$. Then we associate to $\delta_{w_k(E_{\omega_{i_k}})}$ the column $C_{r-k}$ consisting of $j$ boxes filled with the letters $ 1\leq k_1 < \dots < k_j \leq \bar{1} $ in increasing order from top to bottom. The Young tableau $T_{\delta}=(C_{0},\dots,C_{r})$ that we associate to the one-skeleton gallery $\delta$ is the tableau that we obtain by putting the columns next to each other aligned at the top. This Young tableau $T$ has shape $\lambda$.\\
\textbf{Example}\\
For $n=3$ the corresponding Young tableau to the combinatorial one-skeleton gallery  $\delta=\delta_{s_3s_2s_1(E_{\omega_1})}*\delta_{s_2s_1(E_{\omega_1})}*\delta_{s_1s_2s_3s_2(E_{\omega_2})}*\delta_{s_2s_3s_2(E_{\omega_2})}*\delta_{E_{\omega_3}}$ is
$$ T_{\delta}=\Yvcentermath1 \young(1123{{\bar{3}}},2{{\bar{2}}}{{\bar{1}}},3).$$
This assignment is clearly a bijection between the set of Young tableaux of type $B_n$ of shape $\lambda$ for some dominant coweight $\lambda$ and the combinatorial one-skeleton galleries of type $B_n$ of type $\lambda$ starting in the origin $\mathfrak{o}$. Thus for a Young tableau $T=(C_0, \dots,C_r)$ of shape $\lambda$, the one-skeleton gallery $\delta_T=(\mathfrak{o} \subset E_0 \supset V_1 \subset  \dots \supset V_{r+1})$ denotes the associated combinatorial one-skeleton gallery of type $\lambda$ starting in the origin $\mathfrak{o}$ where $E_{r-j}$ is the edge corresponding to the column $C_j$ for $j \in \{0,\dots, r\}$. A 2-step one-skeleton gallery $(E \supset V \subset F)$ becomes a 2-column Young tableau at vertex $V$.\\ \\
As in type $A_n$ we have the following statement:
\begin{satz}
This assignment defines a bijection between the set of all minimal combinatorial one-skeleton galleries of type $\lambda$ starting in the origin $\mathfrak{o}$ and the set of all minimal Young tableaux of shape $\lambda$. Further, it also defines a bijection between the set of all positively folded combinatorial one-skeleton galleries of type $\lambda$ starting in the origin $\mathfrak{o}$ and semistandard Young tableaux of shape $\lambda$.
\end{satz}
\begin{proof}
See type $A_n$ for the first part of the proof. The second part can be found in \cite{GL1}.
\end{proof}
Now we need to translate the notion of reflections on Young tableaux. Let $C$ be a column of a Young tableau of type $B_n$ with entries $\{1\leq i_1<\dots< i_l \leq \bar{1}\}$. Let $\omega$ be a fundamental coweight and $s_{\sigma(\alpha_k)} \in W_{\sigma(V_{\omega})}^v$ a simple reflection where $\sigma \in \left\langle s_1, \dots, s_{n-1}\right\rangle \cong S^n$. Then $s_{\sigma(\alpha_k)}(C)$ is defined to be the column with entries $\{1\leq j_1<\dots< j_l \leq \bar{1}\}$ where $s_{\sigma(\alpha_k)}(\epsilon_{i_1}+\dots+\epsilon_{i_l})=\epsilon_{j_1}+\dots+\epsilon_{j_l}$. More precisely, consider $\sigma=id$: Applying $s_k$ for $k \in \{1, \dots, n-1\}$ interchanges the entry $k$ with $k+1$ and $\bar{k}$ with $\overline{k+1}$, $s_n$ interchanges the entry $n$ with $\bar{n}$ and $s_{i_0}$ for $i\in \{1,\dots,n-1\}$ interchanges $i$ with $\bar{i}$. For $\sigma\neq id$ we have: Applying $s_{\sigma(\alpha_k)}$ for $k \in \{1, \dots, n-1\}$ interchanges the entry $\sigma(k)$ with $\sigma(k+1)$ and $\overline{\sigma(k)}$ with $\overline{\sigma(k+1)}$, $s_{\sigma(\alpha_n)}$ interchanges the entry $\sigma(n)$ with $\overline{\sigma(n)}$ and $s_{\sigma(\alpha_{i_0})}$ for $i\in \{1,\dots,n-1\}$ interchanges $\sigma(i)$ with $\overline{\sigma(i)}$ where we identify the elements of $ \left\langle s_1, \dots, s_{n-1}\right\rangle $ in the natural way with the elements of the symmetric group $S^n$. \\
\subsection{Type $C_n$}
Let $\mathbb{A}$ be the standard apartment of the affine building of type $C_n$. The simple coroots are $\alpha_i\check{}=\epsilon_i-\epsilon_{i+1}$ for $i= 1, \dots, n-1$ and $\alpha_n\check{}=2\epsilon_n$ where $\epsilon_i$ is the $i$th unit vector of $\mathbb{Z}^n$. For the fundamental coweights $\omega_i$ we choose Bourbaki enumeration i.e. $\omega_i=\epsilon_1+\dots+\epsilon_i$ for $i=1,\dots, n-1$ and $\omega_n=\epsilon_1+\dots+\epsilon_n$ \cite{B}. The combinatorial one-skeleton galleries associated to the fundamental coweights are as follows: For the minuscule fundamental coweight $\omega_1$ we get $ \delta_{\omega_1}=\delta_{E_{\omega_1}}=(\mathfrak{o} \subset E_{\omega_1} \supset \omega_1)$. Since we are in type $C_n$ we have $\left\langle \omega_i, \beta \right\rangle \leq 2$ for all positive roots $\beta$. The fundamental coweights $\omega_i$  with $i\neq 1$ are not minuscule, so there exists at least one positive root $\beta$ with $\left\langle \omega_i, \beta \right\rangle= 2$. Hence, $\delta_{\omega_i}=(\mathfrak{o} \subset E_0 \supset V \subset E_1 \supset \omega_i)$ where $E_0=E_{\omega_i}=\{t\omega_n \; \vert \; t \in \left[0,\frac{1}{2}\right]\}$, $ V=V_{\omega_i}=\frac{1}{2}\omega_i$ and $E_1=\{t\omega_n \; \vert \; t \in \left[\frac{1}{2},1\right]\}$. The combinatorial one-skeleton galleries of type $\omega_1$ are $(\mathfrak{o} \subset \tau(E_{\omega_1}) \supset \tau(\omega_1))$ with $\tau \in W$. For a non-minuscule fundamental coweight $\omega_i$ the combinatorial one-skeleton galleries of type $\omega_i$ are $(\mathfrak{o} \subset \tau(E_0) \supset \frac{\tau(\omega_i)}{2} \subset \tau\sigma(E_0)+\frac{1}{2}\tau(\omega_i) \supset \frac{\tau(\omega_i)+ \tau\sigma(\omega_i)}{2})$ where $\tau \in W$ and $\sigma \in W_V^a$. Let $\lambda= \sum_{i=1}^n a_i \omega_i$ be a dominant coweight. Recall that $\delta_{\lambda}$ denotes the combinatorial one-skeleton gallery $\delta_{\lambda}=\delta_{a_1\omega_1}*\cdots*\delta_{a_n\omega_n}$. In this section we restrict ourselves to one-skeleton galleries $\delta=\delta_{w_0(E_{\omega_{i_0}})}*\cdots*\delta_{w_r(E_{\omega_{i_r}})}$ with $i_0\leq\dots\leq i_r$. Consider the action of the Weyl group $W$ on the coweights $X\check{}$: Let $\lambda$ be a coweight given in the basis $\{\epsilon_1, \dots, \epsilon_n\}$. Applying the simple reflection $s_i$ for $i=1, \dots, n-1$ to $\lambda$ interchanges the coefficient of $\epsilon_i$ and the coefficient of $\epsilon_{i+1}$ and fixes all other coefficients. Applying the simple reflection $s_n$ changes the sign of the coefficient of $\epsilon_n$ and fixes all other coefficients.\\
We also need to take a closer look at the Weyl group $W_V^v$ at a vertex $V$ for some vertex $V$ in $\mathbb{A}$. It is sufficient to investigate the Weyl group $W_V^v$ at $V$ where $V=\frac{1}{2}(\epsilon_{i_1}+\dots+\epsilon_{i_l})$ where $i_1 < \dots < i_l$ and $ 1<l $ and where $V=V_{\omega_1}$ since we obtain all other vertices in $\mathbb{A}$ by translating these vertices at coweights. For every vertex $V=\frac{1}{2}(\epsilon_{i_1}+\dots+\epsilon_{i_l})$ where $i_1 < \dots < i_l$ and $1 < l$ in $\mathbb{A}$ the fundamental vertex $V_{\omega_l}$ is of the same type as $V$ such that the Weyl group $W_{V_{\omega_l}}^v$ and the Weyl group $W_V^v$ are isomorphic. More precisely, there exists a minimal element $\sigma \in \left\langle s_1, \dots, s_{n-1}\right\rangle\cong S^n$ such that $\sigma( \omega_l)=\frac{1}{2}(\epsilon_{i_1}+\dots +\epsilon_{i_l})$. Let $\{\alpha_1, \dots, \alpha_k\}$ be the set of simple roots for $\phi_{V_{\omega_l}}$ such that $C_{V_{\omega_l}}^-$ is the anti-dominant chamber of $\mathbb{A}_{V_{\omega_l}}$. Then $\{\sigma(\alpha_1),\dots,\sigma(\alpha_k)\}$ is the set of simple roots for $\phi_{V}$ such that $C_V^-$ is the anti-dominant chamber of $\mathbb{A}_V$. The simple reflections of $W_{\sigma(V_{\omega_l})}$ are $s_{\sigma({\alpha_i})}=\sigma  s_i \sigma^{-1}$ for $i \in \{1, \dots, k\}$. Because of this fact we have a bijection between the root system $\phi_{V}$ and $\phi_{V_{\omega_l}}$ in the following way: We identify the linearly ordered set $\{i_1<\dots< i_l\}$ with the linearly ordered set $\{1 <\dots < l\}$. Consequently, the description of the Weyl group $W_V^v$ reduces to the case where $V= V_{\omega_l}$ for $l=1, \dots, n$:\\
For the minuscule fundamental coweight $\omega_1$ the vertex $V_{\omega_1}$ is special so $W$ and $W_{V_{\omega_1}}^v$ are the same. Now consider a non-minuscule fundamental coweight $\omega_i$ with $i\neq 1$. The set of simple coroots such that $C_{V_{\omega_i}}^-$ is the anti-dominant chamber of the standard apartment of the residue building at $V_{\omega_i}$ is $\{\alpha_{i_0}\check{}=\epsilon_{i-1}+\epsilon_i, \alpha_1\check{},\dots, \alpha_{i-1}\check{}, \alpha_{i+1}\check{},\dots, \alpha_n\check{}\}.$ Let $\lambda$ be a coweight given in the basis $\{\epsilon_1, \dots, \epsilon_n\}$. Applying the simple reflection $s_{i_0}=s_{\alpha_{i_0}\check{}}$  to $\lambda$ interchanges the coefficient of $\epsilon_{i-1}$ with the coefficient of $\epsilon_i$ and the sign and does not change any other coefficient.\\
\subsubsection{Young tableaux of type $C_n$}
Let $\lambda=a_1\omega_1+\dots+a_n\omega_n$ be a dominant coweight and define $p=(p_1, \dots,p_n)$ with $p_1=a_1+\sum_{j=2}^{n}2a_j$ and  for $i\geq 2$ with $p_i=2a_i+\dots+ 2a_n$. Let $r$ be the smallest index with $p_{r+1}=0$. We associate to $\lambda$ a diagram consisting of $r$ left-aligned rows where the $i$th row consists of $p_i$ boxes (from top to bottom). In the following we denote the diagram by $dg(\lambda)$.\\
\textbf{Example}\\
For $n=3$ and $\lambda=\omega_1+\omega_2+\omega_3$ we obtain
$$ dg(\lambda)=\Yvcentermath1 \yng(5,4,2) \; .$$
A \textbf{Young tableau} $T$ of type $C_n$ of \textbf{shape} $\lambda$ is the diagram $dg(\lambda)$ where each box is filled with a letter of the linearly ordered alphabet $\{1 < \dots n < \bar{n} < \dots < \bar{1}\}$ such that the entries are strictly increasing in the columns and that never $i$ and $\bar{i}$ are in the same column. Additionally it holds that for each pair of columns $(C_{a_1+2j-1},C_{a_1+2j})$ with $j=1, \dots, a_2+\dots a_{n}$  either $C_{a_1+2j-1}=C_{a_1+2j}$ or we obtain $C_{a_1+2j}$ from $C_{a_1+2j-1}$ by exchanging for an even number of times some entries $k$ by $\bar{k}$ and some $\bar{l}$ by some $l$ with $k,l \in \{1,\dots,n\}$.
The coweight $\mu=\mu_1\epsilon_1+\dots\mu_n\epsilon_n$, where $2\mu_i=$ number of boxes with entry $i$ $-$ number of boxes with entry $\bar{i}$ $+$ number of boxes with entry $i$ in a column with a single box $-$ number of boxes with entry $\bar{i}$ in a column with a single box, is called the \textbf{content} of the Young tableau $T$.\\
\textbf{Example}\\
The Young tableau
$$ \young(122{{\bar{3}}}1,2{{\bar{3}}}3{{\bar{2}}},3{{\bar{1}}})$$
has shape $\lambda$ and content $\mu=\epsilon_1+\epsilon_2$.\\ \\
Let $T$ be a Young tableau and $C_i$ denotes the $i$th column of $T$ for $i \in \{0,\dots,l\}$ (from left to right).
We call $T$ \textbf{minimal} if all entries of $C_i$ also appear in $C_{i-1}$ for $i \in \{1,\dots,l\}$.\\
A Young tableau $T$ is called \textbf{semistandard} if the entries are weakly increasing from left to right in the rows. Let SSYT($\lambda,\mu$) denote the set of all semistandard Young tableaux with shape $\lambda$ and content $\mu$.\\ 
We can identify one-skeleton galleries of type $\lambda$ for some dominant coweight $\lambda$ in $\mathbb{A}$ with Young tableaux of type $C_n$ of shape $\lambda$ as follows:\\
Let $\delta=\delta_{w_0(E_{\omega_{i_0}})}*\cdots*\delta_{w_r(E_{\omega_{i_r}})}$ be a one-skeleton gallery of type $\lambda= a_1+\frac{1}{2}(a_2\omega_1+ \dots + a_{n-1}\omega_{n})$ where $a_k$ is the number of indices $j$ such that $E_{\omega_{i_j}}=E_{\omega_k}$. Define $\epsilon_{\bar{i}}$ to be $-\epsilon_i$.  Let $w_k(\omega_{i_k})= \epsilon_{k_1}+\dots +\epsilon_{k_j}$ for $i_k \neq n$ and $w_k(\omega_{i_k})=\epsilon_{k_1}+\dots +\epsilon_{k_j}$ for $i_k=n$. Then we associate to $\delta_{w_k(E_{\omega_{i_k}})}$ the column $C_{r-k}$ consisting of $j$ boxes filled with the letters $ 1\leq k_1 < \dots < k_j \leq \bar{1} $ in increasing order from top to bottom. The Young tableau $T_{\delta}=(C_{0},\dots,C_{r})$ that we associate to the one-skeleton gallery $\delta$ is the tableau that we obtain by putting the columns next to each other aligned at the top. This Young tableau has shape $\lambda$.\\
\textbf{Example}\\
For $n=3$ the corresponding Young tableau to the one-skeleton gallery  $\delta=\delta_{s_1s_2s_3s_2s_1(E_{\omega_1})}*\delta_{s_2s_3s_1s_2(E_{\omega_2})}*\delta_{s_3s_1s_2(E_{\omega_2})}*\delta_{s_3s_2s_3(E_{\omega_3})}*\delta_{E_{\omega_3}}$ is
$$ T_{\delta}=\Yvcentermath1 \young(1123{{\bar{1}}},2{{\bar{3}}}{{\bar{3}}}{{\bar{2}}},3{{\bar{2}}}).$$
This assignment is clearly a bijection between the set of Young tableaux of type $C_n$ and shape $\lambda$ for some dominant coweight $\lambda$ and the combinatorial one-skeleton galleries of type $C_n$ and type $\lambda$ starting in the origin $\mathfrak{o}$. Thus for a Young tableau $T=(C_0, \dots,C_r)$ of shape $\lambda$, the one-skeleton gallery $\delta_T=(\mathfrak{o} \subset E_0 \supset V_1 \subset  \dots \supset V_{r+1}))$ denotes the associated combinatorial one-skeleton gallery of type $\lambda$ starting in the origin $\mathfrak{o}$ where $E_{r-j}$ is the edge corresponding to the column $C_j$ for $j \in \{0,\dots, r\}$. A 2-step one-skeleton gallery $(E \supset V \subset F)$ becomes a 2-column Young tableau at vertex $V$.\\ \\
As in type $A_n$ we have the following statement:
\begin{satz}
This assignment defines a bijection between the set of all minimal combinatorial one-skeleton galleries of type $\lambda$ starting in the origin $\mathfrak{o}$ and the set of all minimal Young tableaux of shape $\lambda$. Further, it also defines a bijection between the set of all positively folded combinatorial one-skeleton galleries of type $\lambda$ starting in the origin $\mathfrak{o}$ and semistandard Young tableaux of shape $\lambda$.
\end{satz}
\begin{proof}
See type $A_n$ for the first part of the proof. The second part can be found in \cite{GL1}.
\end{proof}
Now we need to translate the notion of reflections on Young tableaux. Let $C$ be a column of a Young tableau of type $C_n$ with entries $\{1\leq i_1<\dots< i_l \leq \bar{1}\}$. Let $\omega$ be a fundamental coweight and $s_{\sigma(\alpha_k)} \in W_{\sigma(V_{\omega})}^v$ a simple reflection where $\sigma \in \left\langle s_1, \dots, s_{n-1}\right\rangle$. Then $s_{\sigma(\alpha_k)}(C)$ is defined to be the column with entries $\{1\leq j_1<\dots< j_l \leq \bar{1}\}$ where $s_{\sigma(\alpha_k)}(\epsilon_{i_1}+\dots+\epsilon_{i_l})=\epsilon_{j_1}+\dots+\epsilon_{j_l}$. More precisely, consider $\sigma=id$: Applying $s_k$ for $k \in \{1, \dots, n-1\}$ interchanges the entry $k$ with $k+1$ and $\bar{k}$ with $\overline{k+1}$, $s_n$ interchanges the entry $n$ with $\bar{n}$ and $s_{i_0}$ for $i\in \{1,\dots,n-1\}$ interchanges $i$ with $\overline{i-1}$ and $i-1$ with $\bar{i}$. For $\sigma\neq id$ we have: Applying $s_{\sigma(\alpha_k)}$ for $k \in \{1, \dots, n-1\}$ interchanges the entry $\sigma(k)$ with $\sigma(k+1)$ and $\overline{\sigma(k)}$ with $\overline{\sigma(k+1)}$, $s_{\sigma(\alpha_n)}$ interchanges the entry $\sigma(n)$ with $\overline{\sigma(n)}$ and $s_{\sigma(\alpha_{i_0})}$ for $i\in \{1,\dots,n-1\}$ interchanges $\sigma(i)$ with $\overline{\sigma(i-1)}$ and $\sigma(i-1)$ with $\overline{\sigma(i)}$ where we identify the elements in $\left\langle s_1, \dots, s_{n-1}\right\rangle$ in the natural way with the elements of the symmetric group $S^n$.
\subsection{Combinatorial formula}
A natural question now is whether it is possible to calculate the contribution $c(\delta)$ of a positively folded combinatorial one-skeleton gallery $\delta$ to the Gaussent-Littelmann formula for type $A_n$, $B_n$ and $C_n$ only with the associated semistandard Young tableau $T_{\delta}$. It turns out that the recurrence in Theorem \ref{rec} leads to a very simple algorithm how to do this:\\
In order to explain the algorithm we first need some more notation:\\ \\
We say a simple reflection $s_k$ in $W_V^v$ for some vertex $V$ increases (resp. decreases) a column $C$ if there is at least one entry in $C$ that is increased (resp. decreased) by applying $s_k$.\\ \\
Let now $(E \supset V \subset F)$ be a 2-step one-skeleton gallery in $\mathbb{A}$ and $T=(C_F,C_E)$ be the associated 2-column Young tableau at vertex $V$. Consider $(s_j(E) \supset V \subset s_j(F))$ for a simple reflection $s_j$ in $W_V^a$. We denote the associated Young tableau by $s_j(T)$. Clearly, $s_j(T)=(s_j(C_F),s_j(C_E))$, where $s_j$ on the right hand side is the simple reflection in $W_V^v$. We can also consider $(E \supset V \subset s_j(F))$. We denote the associated Young tableau by $id_j(T)$ and again we have $id_j(T)=(s_j(C_F), C_E)$, where $s_j$ on the right hand side is the simple reflection in $W_V^v$.\\ \\
Let $\delta=(\mathfrak{o}=V_0 \subset E_0 \supset \dots \supset V_{r+1})$ be a positively folded combinatorial one-skeleton gallery with type $\lambda$ and target $\mu$. Recall that the contribution of this gallery to the Gaussent-Littelmann formula for $L_{\lambda,\mu}$ is a product of contributions $c((E_{j-1} \supset V_j \subset E_j))$  for $j \in \{1,\dots,r\}$ and $c(E_0)$. We now explain how to compute $c((E_{j-1} \supset V_j \subset E_j))$  for $j \in \{1,\dots,r\}$ in a very simple way with the associated Young tableau:\\ \\
Let therefore $\delta= (E \supset V \subset F)$ be a positively folded 2-step one-skeleton gallery. Let $T_{\delta}=(C_F,C_E)$ be the associated semistandard 2-column Young tableau at vertex $V$.\\ \\
We build a tree $G_{\delta}$ where the vertices are 2-column semistandard Young tableaux and the root is $T_{\delta}$. Let $T=(C_1,C_2)$ be a semistandard 2-column Young tableau at vertex $V$:\\ \\
\textbf{Step 1:} Find a simple reflection $s_j \in W_V^v$ that increases $C_1$. If $s_j(T)$ is still semistandard then $s_j(T)$ is a vertex of the tree connected to the vertex $T$ by an edge. If $s_j(T)$ is not semistandard then $id_j(T)$ is semistandard and $id_j(T)$ is a vertex of the tree connected to the vertex $T$ by an edge.\\ \\
\textbf{Step 2:} Label the edge created in the first step as follows: If the edge connects $T$ and $id_j(T)$ label the edge with $id_j^+$. If the edge connects $T$ and $s_j(T)$ the labeling depends on $s_j(C_2)$:\\ We label the edge with
$\begin{cases}
     s_j^+,  & \text{if } s_j \text{ increases }C_2 \text{ or if } s_j(C_2)=C_2 \\
  s_j^-, & \text{if }s_j\text{ decreases }C_2.
\end{cases}$\\ \\
\textbf{Step 3:} If we have labeled an edge in the second step with $s_j^-$ then the tree branches at the vertex $T$ as follows: $id_j(T)$ is also a vertex of the tree connected to $T$ by an edge. This edge is labeled with $id^+_j$.\\ \\
Build the tree $G_{\delta}$ starting by applying step 1 to step 3 to the vertex $T_{\delta}$ and go on by applying step 1 to step 3 to the new created vertices using the same simple reflection in step 1 for all tableaux with the same first column and so on. This procedure stops when there is no simple reflection $s_j$ that increases $C_1$ or in other words when the edge corresponding to the column $C_1$ in the associated 2-step one-skeleton gallery $\delta_T$ is contained in $C_V^-$. We call these vertices final. We denote the subset of all simple paths in $G_{\delta}$ starting at the root $T_{\delta}$ and ending at a final vertex by $F_{\delta}$.\\
\begin{defi}\label{C_i}
$$    c(C_F,C_E)= \sum_{\sigma  \in F_{\delta}}q^{pr(\sigma)}(q-1)^{pf(\sigma)},$$
where $pr(\sigma)$ is the number of edges in $\sigma$ labeled with an $s_j^+$ for some $j$ and $pf(\sigma)$ is the number of edges in $\sigma$ labeled with an $id_j^+$ for some $j$.
\end{defi}
\begin{epr}
$$ c((E \supset V \subset F))=c(C_F,C_E).$$  
\end{epr}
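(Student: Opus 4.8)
The plan is to read the tree $G_\delta$ as an explicit realization of the recurrence in Theorem \ref{rec}, and then to match the two statistics edge by edge using Lemmas \ref{pm1} and \ref{pm2}. First I would set up a bijection between the simple paths $\sigma\in F_\delta$ and the galleries $\textbf{c}\in\Gamma^+_{S_V}((E\supset V\subset F))$. A path $\sigma=(T_\delta=T^{(0)}\to T^{(1)}\to\dots\to T^{(l)})$ records at each edge whether a reflection was applied to the whole tableau (an $s_j$-edge, $T^{(m)}=s_j(T^{(m-1)})$) or only to the first column (an $id_j$-edge, $T^{(m)}=id_j(T^{(m-1)})$), together with the reflection $s_j$ used. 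Reading this data from the final vertex back to the root and feeding it into the maps $f_1^j$ (for $s_j$-edges) and $f_2^j$ (for $id_j$-edges), starting from the unique gallery $\textbf{c}_0=(C_V^-)$ attached to the final tableau $T^{(l)}$ (whose first column is maximal, so that its edge lies in $C_V^-$), reconstructs a gallery $\textbf{c}=f_{p_1}^{i_1}(\cdots f_{p_l}^{i_l}(\textbf{c}_0)\cdots)$. Theorem \ref{rec} says exactly that $\sigma\mapsto\textbf{c}$ is a bijection, so the work is to check that Steps 1--3 of the tree enumerate precisely the admissible sequences $(p_1,\dots,p_l)$: an $s_j$-edge corresponds to $f_1^j$ (a wall-crossing in the first position) and an $id_j$-edge to $f_2^j$ (a folding); the condition in Step 1 that $s_j$ increase $C_F$ is the tableau translation of $l(s_j(F))=l(F)+1$; the alternative "$s_j(T)$ is or is not semistandard'' matches "a crossing is or is not admissible as first step''; and the branching in Step 3 reproduces the extra choice $p_m=2$ available exactly when $f_2^j$ applies as well.

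Next I would verify that this bijection carries $(t(\textbf{c}),r(\textbf{c}))$ to $(pr(\sigma),pf(\sigma))$. The folding part is immediate from Lemma \ref{pm2}: each application of $f_2^j$ prepends a \emph{positive} folding, so the number of positive foldings of $\textbf{c}$ equals the number of $f_2^j$-steps, i.e.\ the number of $id_j$-edges of $\sigma$; since every such edge is labelled $id_j^+$, this is $pf(\sigma)$, and in particular no negative foldings occur, giving $r(\textbf{c})=pf(\sigma)$. For the crossings I would invoke Lemma \ref{pm1}: each $s_j$-edge contributes one wall-crossing, and, once the change of reference sector induced by $f_1^j$ is taken into account, this crossing is positive precisely when $S_V\supset E$ lies on the same side of the wall $H_{(\beta_j,n_j)}$ as $C_V^-$. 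Granting that this is equivalent to the labelling rule of Step 2 (label $s_j^+$ when $s_j$ increases or fixes $C_E$, $s_j^-$ when it decreases $C_E$), one gets $t(\textbf{c})=pr(\sigma)$, and combining with Definition \ref{C_i} yields $c((E\supset V\subset F))=\sum_\sigma q^{pr(\sigma)}(q-1)^{pf(\sigma)}=c(C_F,C_E)$.

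The main obstacle is exactly this equivalence between the length/separation condition of Lemma \ref{pm1} and the combinatorial action of $s_j$ on the second column $C_E$. Translating "$S_V$ lies on the $C_V^-$-side of $H_{(\beta_j,n_j)}$'' into a statement about the entries of $C_E$ requires care, because the incoming edge $E$ points \emph{back} toward the previous vertex, so its direction in $\mathbb{A}_V$ is the negative of the direction encoded by the column $C_E$. This orientation reversal is what turns "$s_j$ increases $C_E$'' into "$S_V$ lies on the $C_V^-$-side'', hence into a positive crossing, while "$s_j$ decreases $C_E$'' becomes a negative crossing together with the availability of an extra positive folding (the Step 3 branching, where $f_2^j$ applies because $H_{(\beta_j,n_j)}$ now separates $S_V$ from $C_V^-$). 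I would carry out this check from the explicit reflection rules recorded above, where $s_k$ interchanges the entries $k$ and $k+1$ (and the analogous rules in types $B_n$ and $C_n$), treating the three cases (increase, fix, decrease) separately; the delicate "fix'' case, in which $E$ lies on the wall $H_{(\beta_j,n_j)}$, is pinned down by using that $-S_V$ contains a face of the same type as $F$, which forces $S_V$ onto the $C_V^-$-side and hence makes the crossing positive. Finally I would confirm that the tree is well defined and terminates (Step 1 depends only on $C_F$, and each edge strictly increases the first column, so $F$ reaches $C_V^-$ after $l(F)$ steps), so that $F_\delta$ is finite and the two sums above literally agree.
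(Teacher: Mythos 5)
Your scaffolding is the paper's (the base case with the maximal column and $\textbf{c}_0=(C_V^-)$, the recurrence of Theorem \ref{rec}, Lemmas \ref{pm1} and \ref{pm2}, and the translation of $l(s_j(F))=l(F)+1$ as ``$s_j$ increases $C_F$''), but the mechanism you put at the center --- a statistic-preserving bijection between simple paths $\sigma\in F_\delta$ and galleries $\textbf{c}\in\Gamma^+_{S_V}((E\supset V\subset F))$ --- does not exist in general, and the paper explicitly warns against it: Remark (2) after Definition \ref{C_i} states that the simple paths in the tree do \emph{not} correspond to the galleries of chambers, precisely because the algorithm re-chooses the chamber $S_V$ at every step instead of tracking $s_j(S_V)$ through the recursion. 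The difficulty is concentrated exactly where you located it, the ``fix'' case $s_j(C_E)=C_E$, i.e.\ $E$ lying in the wall of $s_j$; but your resolution of it is wrong. You claim that the condition ``$-S_V$ contains a face of the same type as $F$'' forces $S_V$ onto the $C_V^-$-side of that wall. It does not: that condition constrains $S_V$ relative to the \emph{type} of $F$, not relative to a wall containing $E$, chambers containing $E$ exist on both sides, and Remark (3) of the paper says explicitly that in this situation either side can occur, so that either labelling $s_j^+$ or $s_j^-$ is legitimate. Worse, at later steps of the recursion the relevant reference chamber is the tracked image $s_{j_k}\cdots s_{j_1}(S_V)$, on which no fresh condition is imposed at all, so nothing prevents it from lying on the far side of the next wall containing the current edge. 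When that happens, Lemma \ref{pm1} puts you in the second case of the recurrence: the galleries split into those with a \emph{negative} first crossing and those with a positive first folding, contributing $c((s_j(E)\supset V\subset s_j(F)))+(q-1)\,c((E\supset V\subset s_j(F)))$, two branches, while your tree has the single edge $s_j^+$ contributing $q\cdot c(\cdot)$. Since $s_j(E)=E$ here the two sub-polynomials coincide and $1+(q-1)=q$ makes the \emph{sums} equal --- this is why the algorithm is correct --- but term by term the statistics cannot match: multiplying each monomial $q^{t}(q-1)^{r}$ by $q$ is not the same as splitting it into $q^{t}(q-1)^{r}$ and $q^{t}(q-1)^{r+1}$. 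So no bijection can carry $(t(\textbf{c}),r(\textbf{c}))$ to $(pr(\sigma),pf(\sigma))$.

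The repair is the paper's actual argument, which works at the level of the polynomials rather than the objects: induct on $l(F)$, and at each step where $s_j(C_E)=C_E$ invoke the independence of the Gaussent--Littelmann formula from the admissible choice of $S_V$ to \emph{re-choose} $S_V$ on the $C_V^-$-side of the wall, which places you in the first case. The three cases ($s_j$ increases, fixes, or decreases $C_E$) then give $c(C_F,C_E)=q\,c(s_j(C_F),s_j(C_E))$, the same identity (using $s_j(C_E)=C_E$), and $c(C_F,C_E)=c(s_j(C_F),s_j(C_E))+(q-1)\,c(s_j(C_F),C_E)$ respectively, which is exactly the tree recursion of Steps 1--3, with $c((s_j(E)\supset V\subset s_j(F)))=0$ absorbing the case where $s_j(T)$ fails to be semistandard. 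Your case analysis of the reflection rules and your termination argument fit into this induction unchanged; what must go is the path-by-path matching and the forcing claim that was supposed to justify it.
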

\begin{proof}
Consider $c((E \supset V \subset F))$. Choosing a reduced decomposition $s_{i_1}\dots s_{i_m}$ in $W_V^a$ for $w_D$ is equivalent to choose the sequence $s_{i_1}, \dots, s_{i_m}$ with $s_{i_j}$ increases the column $ s_{i_{j-1}}\dots s_{i_1}(C_F)$ for every $j \in \{1, \dots, m\}$ and there does not exist a simple reflection that increases the column $s_{i_{m}}\dots s_{i_1}(C_F)$ further. Fix a reduced decomposition $s_js_{i_1}\dots s_{i_m}$ for $w_D$. We now want to apply the recurrence in Theorem \ref{rec} and Lemma \ref{pm1} and \ref{pm2}. Therefore we need to consider two cases:\\
\textbf{1. case: $l(s_jw_{S_V})=l(w_{S_V})+1$}\\
We know that all galleries of chambers in $\Gamma^+_{S_V}((E \supset V \subset F),(j,i_1, \dots, i_m))$ must have a crossing in the first step and that we therefore obtain all galleries of
$$\Gamma^+_{S_V}((E \supset V \subset F),(j,i_1, \dots, i_m))$$
by applying $f_1^j$ to the galleries of $\Gamma^+_{s_j(S_V)}((s_j(E) \supset V \subset s_j(F)), (i_1, \dots, i_m))$. This means that we extend the galleries of chambers in the first step by a crossing of type $j$. In this case the crossing is positive and with Theorem \ref{rec} and Lemma \ref{pm1} it follows that we obtain $c((E \supset V \subset F))$ by multiplying every contribution of an element in $\Gamma^+_{s_j(S_V)}((s_j(E) \supset V \subset s_j(F)),(i_1, \dots, i_m))$, and therefore the whole product $c((s_j(E) \supset V \subset s_j(F)))$, by $q$.\\
\textbf{2. case: $l(s_jw_{S_V})=l(w_{S_V})-1$}\\
Then we know by Theorem \ref{rec} that we obtain all galleries in
$$\Gamma_{S_V}^+((E \supset V \subset F),(j,i_1, \dots, i_m))$$
by applying $f_1^j$ to all galleries in $\Gamma^+_{s_j(S_V)}((s_j(E) \supset V \subset s_j(F)),(i_1, \dots, i_m))$ and $f_2^j$ to all galleries in $\Gamma^+_{S_V}((E \supset V \subset s_j(F)),(i_1, \dots, i_m))$.
Applying $f_1^j$ means extending the galleries of chambers in the first step by a crossing of type $j$. In this case the crossing is negative. Applying $f_2^j$ extends the galleries by a positive folding in the first step. Hence, with Theorem \ref{rec} and with Lemma \ref{pm1} and \ref{pm2}, we obtain
$$c((E \supset V \subset F))= c((s_j(E) \supset V \subset s_j(F)))+(q-1)c((E \subset V \subset s_j(F))).$$
It remains to explain how to decide whether we are in the first or in the second case just by consideration of the
associated Young tableau: If $s_j$ increases $C_2$ we are clearly in the first case because the face corresponding to $C_2$ and therefore also $S_V$ is on the same side of the hyperplane corresponding to the reflection $s_j$ as $C_V^-$, if $s_j$ decreases $C_2$ we are clearly in the second case because the face corresponding to $C_2$ and therefore also $S_V$ is on the different side of the hyperplane corresponding to $s_j$ independent of the choice of $S_V$. If $s_j$ does not change the column $C_2$ this means that the face corresponding to $C_2$ is contained in the hyperplane corresponding to $s_j$ thus the case in which we are depends on our choice of $S_V$. Recall that the Gaussent-Littelmann formula is independent of the choice of $S_V$. We choose $S_V$ to be on the same side of the hyperplane corresponding to the simple reflection $s_j$ as $C_V^-$, thus we are in the first case.  The claim follows by induction over the length $l(F)$.
\end{proof}
\begin{rem}
\begin{itemize}
     \item[(1)] Note that $c(C_F,C_E)$ is independent of the choice of the simple reflection $s_j$ in the first step because the Gaussent-Littelmann formula is independent of the choice of the reduced expression for $w_D$.
     \item[(2)] We want to point out that the simple paths in the tree $F_{\delta}$ do not correspond to the galleries of chambers in the Gaussent-Littelmann formula. The reason for this is that we make a new choice of chamber $S_V$ in every step of the induction. In fact, not keeping track of where $S_V$ is reflected to makes our algorithm as simple as it is.
      \item[(3)]
In the proof above, if $s_j(E)=E$, we have chosen $S_V$ to be on the same side of the hyperplane corresponding to $s_j$ as $C_V^-$ because it creates a tree as simple as possible. Let us explain this in more detail: In every step of the induction of the proof where the simple reflection $s_j$ does not change the second column of the tableau $T$ we can choose whether $S_V$ lies on the same side of the hyperplane corresponding to the reflection $s_j$ as $C_V^-$ column or not. In our construction of the tree this means that we can decide if we label the edge connecting $T$ and $s_j(T)$ with $s_j^+$ or $s_j^-$. Since the tree branches everytime we choose $s_j^-$ we obtain a tree as simple as possible if we choose $S_V$ in every step to be on the same side of the hyperplane corresponding to the reflection $s_j$.
\end{itemize}
\end{rem}
For the sake of completeness we now explain how to compute $c(E_0)$ with the associated column $C_{r}$: As already mentioned in the previous proof to choose a reduced expression $s_{i_1}\dots s_{i_k}$ for the Weyl group element $w_{D_0} \in W_{\mathfrak{o}}^v=W$ that sends $C_{{\mathfrak{o}}}^-$ to the chamber $D_0$ of $\mathbb{A}_{\mathfrak{o}}$ that contains $E_0$ and is closest to $C_{{\mathfrak{o}}}^-$ is equivalent to choosing a sequence $s_{i_1}, \dots, s_{i_k}$ with $s_{i_j}$ increases the column $s_{i_{j-1}}\dots s_{i_1}(C_r)$ for every $j \in \{1, \dots, m\}$. Thus,
$$ c(E_0)=c(C_r)= q^k.$$
Summing up we obtain the following formula:\\
\begin{satz}{Combinatorial version of the Gaussent-Littelmann formula}\\
Let $\lambda$ and $\mu$ be dominant coweights. Then
$$ L_{\lambda, \mu}(q)=\sum_{T \in SSYT(\lambda, \mu)} c(C_r)\prod_{i=0}^{r-1}c(C_i,C_{i+1}),$$
for $T=(C_0, \dots, C_r)$  where $c(C_r)$ is as above and $c(C_{i},C_{i+1})$ as in definition \ref{C_i} for $i \in \{0, \dots, r-1\}$.
\end{satz}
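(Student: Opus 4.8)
The plan is to deduce the formula directly from the Corollary to the Gaussent-Littelmann formula, the bijection between positively folded combinatorial one-skeleton galleries and semistandard Young tableaux, and the preceding Proposition $c((E \supset V \subset F)) = c(C_F,C_E)$. No new geometric input is needed: the whole task is to rewrite the Corollary's sum over galleries as a sum over tableaux and then to match the two products factor by factor, keeping careful track of index conventions.

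First I would recall the Corollary, which expresses
$$ L_{\lambda, \mu}(q)= \sum_{\delta \in \Gamma^+(\delta_{\lambda}, \mu)}c(E_0)\prod_{j=1}^rc((E_{j-1} \supset V_j \subset E_{j}))$$
as a sum over positively folded galleries $\delta=(\mathfrak{o}=V_0 \subset E_0 \supset \dots \supset V_{r+1}=\mu)$, where each factor depends only on its corresponding edge or two-step subgallery. I would then apply the bijection $\delta \mapsto T_\delta$ of the appropriate type, which identifies $\Gamma^+(\delta_\lambda,\mu)$ with the semistandard Young tableaux of shape $\lambda$. Here I must check that the target $V_{r+1}=\mu$ of the gallery coincides with the content $\mu$ of $T_\delta$; this is immediate from the definitions, since in each type the content is read off from the letters of the columns, whose associated coweights sum to $\sum_k w_k(\omega_{i_k})=V_{r+1}$. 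Hence the bijection restricts to $\Gamma^+(\delta_\lambda,\mu)\leftrightarrow SSYT(\lambda,\mu)$.

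Next I would translate each factor using the convention $E_{r-j}\leftrightarrow C_j$ relating edges and columns. Setting $j=r$ shows that $E_0$ corresponds to the rightmost column $C_r$, so the computation preceding the theorem gives $c(E_0)=c(C_r)$. For the two-step factors, each subgallery $(E_{j-1}\supset V_j\subset E_j)$ is positively folded because $\delta$ is locally positively folded, so the Proposition applies; under the edge-column correspondence one has $E_{j-1}\leftrightarrow C_{r-j+1}$ and $E_j\leftrightarrow C_{r-j}$, so the associated two-column tableau is $(C_F,C_E)=(C_{r-j},C_{r-j+1})$, whence $c((E_{j-1}\supset V_j\subset E_j))=c(C_{r-j},C_{r-j+1})$. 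Reindexing by $i=r-j$ turns $\prod_{j=1}^r$ into $\prod_{i=0}^{r-1}c(C_i,C_{i+1})$.

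Combining the two translations gives $c(\delta)=c(C_r)\prod_{i=0}^{r-1}c(C_i,C_{i+1})$ for $\delta=\delta_{T}$, and summing over the bijection yields the claimed formula. The only point requiring care is the bookkeeping, namely the reversal between the left-to-right column index and the edge index together with the identification of content and target; there is no substantive obstacle beyond this, since all of the analytic content has already been packaged into the Proposition and the recurrence of Theorem \ref{rec}.
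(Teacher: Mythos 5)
Your proposal is correct and is essentially the paper's own argument: the paper obtains this theorem precisely by ``summing up'' the corollary to the Gaussent-Littelmann formula, the bijection between positively folded combinatorial one-skeleton galleries and semistandard Young tableaux, the proposition $c((E \supset V \subset F))=c(C_F,C_E)$, and the computation $c(E_0)=c(C_r)=q^k$. Your index bookkeeping ($E_{r-j}\leftrightarrow C_j$, so that $(E_{j-1}\supset V_j\subset E_j)$ yields the two-column tableau $(C_{r-j},C_{r-j+1})$ and the reindexing $i=r-j$ gives $\prod_{i=0}^{r-1}c(C_i,C_{i+1})$) and your check that the gallery's target matches the tableau's content are exactly the routine verifications the paper leaves implicit.
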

We define 
$$c(T)=c(C_r)\prod_{i=0}^{r-1}c(C_i,C_{i+1}).$$
\begin{rem}
For the algorithm it is not required to know exactly at what vertex $V$ the 2-column Young tableau is. It is sufficient to know at what type of vertex it is:\\
Starting with a Young tableau $T=(C_0,\dots, C_r)$ we only need to calculate the type of the vertex at which the 2-column Young tableau $(C_i,C_{i+1})$ is for $i \in \{0, \dots, n-1\}$: 
\begin{itemize}
    \item[(1)] In \textbf{type $A_n$}: All vertices are special and therefore for all 2-column Young tableau $(C_i,C_{i+1})$ at a vertex $V$ the Weyl group $W_V^v$ is $W$.
    \item[(2)] In \textbf{type $B_n$}: If the columns $C_i$ and $C_{i+1}$ both have $j$ boxes where $j$ is stricty smaller then $n$ and $r-i$ is odd, then the 2-column Young tableau $(C_i,C_{i+1})$ is at a vertex $V$ of the same type as the fundamental vertex $V_{\omega_j}$ in $\omega_j$-direction. Let $\{k_1, \dots k_{j-s}, \overline{l_1},\dots, \overline{l_{s}}\}$ be the set of entries of $C_i$. Let $\sigma \in \left\langle s_1, \dots s_{n-1}\right\rangle\cong S^n$ be the permutation that identifies the linearly ordered set $\{1<\dots< j\}$ with the set $\{k_1, \dots, k_{j-s}, l_1, \dots, l_s\}$ in ascending order. Then the Weyl group $W_V^v$ is $W_{\sigma(V_{\omega_j})}^v$. Recall that the simple reflections of $W_{\sigma(V_{\omega_j})}^v$ are $s_{\sigma(\alpha_m)}$ where $m \in \{1, \dots j-1,j_0,j+1, \dots, n\}$. Applying $s_{\sigma(\alpha_m)}$ for $m \in \{1, \dots, j-1,j+1, \dots, n-1\}$ to the column $C_i$ interchanges the entry $\sigma(m)$ with $\sigma(m+1)$ and $\overline{\sigma(m)}$ with $\overline{\sigma(m+1)}$ and applying $s_{\sigma(\alpha_n)}$ interchanges $\sigma(n)$ and $\overline{\sigma(n)}$. Applying $s_{\sigma(\alpha_{j_0})}$ interchanges $\sigma(j)$, which is the highest element in the set $\{k_1, \dots, k_{j-s}, l_1, \dots, l_s\}$ with $\overline{\sigma(j)}$. All other 2-column Young tableaux are at a special vertex.
    \item[(3)] In \textbf{type $C_n$}: If the columns $C_i$ and $C_{i+1}$ both have $j$ boxes where $j$ is bigger then $1$ and $i$ is even, then the 2-column Young tableau $(C_i,C_{i+1})$ is at a vertex $V$ of the same type as the fundamental vertex $V_{\omega_j}$ in $\omega_j$-direction. Let $\{k_1, \dots k_{j-s}, \overline{l_1},\dots, \overline{l_{s}}\}$ be the set of entries of $C_i$. Let $\sigma \in \left\langle s_1, \dots, s_{n-1}\right\rangle\cong S^n$ be the permutation that identifies the linearly ordered set $\{1<\dots< j\}$ with the set $\{k_1, \dots, k_{j-s}, l_1, \dots, l_s\}$ in ascending order. Then the Weyl group $W_V^v$ is $W_{\sigma(V_{\omega_j})}^v$. Recall that the simple reflections of $W_{\sigma(V_{\omega_j})}^v$ are $s_{\sigma(\alpha_m)}$ where $m \in \{1, \dots j-1,j_0,j+1, \dots, n\}$. Applying $s_{\sigma(\alpha_m)}$ for $m \in \{1, \dots, j-1,j+1, \dots, n-1\}$ to the column $C_i$ interchanges the entry $\sigma(m)$ with $\sigma(m+1)$ and $\overline{\sigma(m)}$ with $\overline{\sigma(m+1)}$ and applying $s_{\sigma(\alpha_n)}$ interchanges $\sigma(n)$ and $\overline{\sigma(n)}$. Applying $s_{\sigma(\alpha_{j_0})}$ interchanges $\sigma(j)$ which is the highest element in the set $\{k_1, \dots, k_{j-s}, l_1, \dots, l_s\}$ with $\overline{\sigma(j-1)}$ and $\sigma(j-1)$ with $\overline{\sigma(j)}$. All other 2-column Young tableaux are at a special vertex.
\end{itemize}
\end{rem}
\begin{rem}
The combinatorial Gaussent-Littelmann formula is recursively defined. It is desirable to have a closed formula. In the next chapter we prove that the well-known Macdonald formula for Hall-Littlewood polynomials for type $A_n$ is the closed formula for our combinatorial Gaussent-Littelmann formula for type $A_n$. Finding a closed formula for type $B_n$ and $C_n$ seems much more complicated. To see this consider for example the combinatorial formula for type $A_n$: For every simple path $\sigma$ in $F_{\delta}$ the number $pf(\sigma)$ is the same. Namely, it is the number of boxes in $C_{i+1}$ such that the entry of this box is not an entry of a box in $C_i$. We do not have this property in the other two types. 
\end{rem}
\section{Macdonald formula}
Our point of departure in this section is the Macdonald formula as it is presented in \cite{Mac1}. This formula is a sum over all semistandard Young tableaux of type $A_n$ and the computation uses the so-called $\lambda$-chain associated to a semistandard Young tableau. For our purposes we need to restate it explicitly in terms of Young tableaux i.e. in terms of boxes and entries of these boxes.\\ 
We work with a root sytem of type $A_n$ throughout this section. First we need some more definitions and notation as in \cite{Mac1}: \\
Let $\lambda \in X\check{}_+$ be a dominant coweight. Consider the diagram dg($\lambda$). By reflecting in the main diagonal (from top-left to bottom-right) we obtain a new diagram. We refer to the dominant coweight of the new diagram as the conjugate of $\lambda$ and denote it by $\lambda'$. Note that $\lambda_i'$ is the number of boxes in the $(i-1)$th column of dg($\lambda$) (the 0th column is left). Define $$ m_i(\lambda)= \lambda_i'-\lambda_{i+1}'.$$
Let $\lambda$ and $\mu$ be dominant coweights with $\lambda \supset \mu$ i.e. $\lambda_i \geq \mu_i$ for all $i$. In other words, the diagram dg($\lambda$) contains the whole diagram dg($\mu$). The skew-diagram dg($\lambda - \mu$) is what we get when we cancel out all boxes in dg($\lambda$) that also appear in dg($\mu$). The coefficient $\lambda_i-\mu_i$ is again the number of boxes in the $i$th row of the diagram dg($\lambda-\mu$) and $\lambda_i'-\mu_i'$ is the number of boxes in the $i$th column. Let $T$ be a semistandard Young tableau of shape $\lambda$. Define dg($\lambda^{(i)}$) to be the diagram consisting of all boxes in $T$ with entries $\leq i$ for $i  \in \{0, \dots ,n+1\}$ and where $\lambda^{(i)}_j$ is again the number of boxes in the $j$th column of dg($\lambda^{(i)}$). We get the following chain: 
 $$\lambda^{(0)} \subset \dots \subset \lambda^{(n+1)}= \lambda.$$ 
We are now in the position to state Macdonald's formula:
\begin{satz}{\cite{Mac1} Macdonald formula for Hall-Littlewood polynomials of type $A_n$.}\\
Let $\lambda$ be a dominant coweight and $t=q^{-1}$ a variable. Then the Hall-Littlewood polynomial $P_{\lambda}(t)$ is
$$ P_{\lambda}(t)=\sum_{\mu \in \mathbb{Z}^{n+1}}\sum_{T \in SSYT(\lambda,\mu)}  \frac{\varphi_T(t)}{b_{\lambda}(t)}x^{\mu},$$
where

$$ b_{\lambda}(t)= \prod_{i \geq 1}  \varphi_{m_i(\lambda)}(t),$$
where
$$ \varphi_k(t)= (1-t)(1-t^2)\cdots(1-t^k) \; \text{ for } \; k\in \mathbb{N}$$
and 
$$ \varphi_{T}(t)= \prod_{i=1}^{n+1} \varphi_{\lambda^{(i)}/\lambda^{(i-1)}}(t),$$
where
$$ \varphi_{\lambda/\mu}(t)= \prod_{j \in I}(1-t^{m_j(\mu)}),$$
where
$$ I= \{j \geq 1 \; \; \vert \; \; (\lambda-\mu)_j'  > (\lambda-\mu)_{j+1}'\}.$$
\end{satz}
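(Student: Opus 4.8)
The displayed identity is Macdonald's formula for $P_\lambda$, recorded in \cite{Mac1}, so the aim is not to reprove the underlying symmetric-function identity but to verify that the bookkeeping set up above---the chain $\lambda^{(0)}\subset\cdots\subset\lambda^{(n+1)}=\lambda$, the normalizing factor $b_\lambda(t)$, and the skew factors $\varphi_{\lambda/\mu}(t)$---reproduces exactly the quantities appearing in Macdonald's statement. The plan is therefore to set up a \emph{dictionary} between our box-and-entry description and Macdonald's description in terms of the conjugate shapes $(\lambda^{(i)})'$, and then to reduce the whole identity to the equality of a single skew factor.

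First I would recall the shape of Macdonald's result: the Hall--Littlewood functions occur in two normalizations related by $Q_\lambda(t)=b_\lambda(t)\,P_\lambda(t)$, and one of them expands over semistandard tableaux of shape $\lambda$ as a sum $\sum_T (\text{skew weight})\,x^{\mathrm{wt}(T)}$, where $\mathrm{wt}(T)$ is the content and the skew weight factors through the chain of shapes cut out by the entries $\le i$. Grouping the tableaux by their content $\mu$ and dividing by $b_\lambda(t)$ turns this sum into the displayed double sum over $\mu$ and over $SSYT(\lambda,\mu)$. Hence no new identity is required, and the proof splits into three verifications: that $b_\lambda(t)=\prod_{i\ge1}\varphi_{m_i(\lambda)}(t)$ with $m_i(\lambda)=\lambda_i'-\lambda_{i+1}'$ is Macdonald's normalizing factor, that $\varphi_T=\prod_{i=1}^{n+1}\varphi_{\lambda^{(i)}/\lambda^{(i-1)}}$ is the correct multiplicative decomposition, and that each factor $\varphi_{\lambda^{(i)}/\lambda^{(i-1)}}$ agrees with Macdonald's skew coefficient. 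The first two are immediate: $m_i(\lambda)=\lambda_i'-\lambda_{i+1}'$ is the number of parts of $\lambda$ equal to $i$, and the multiplicative decomposition of $\varphi_T$ is built into the definition of the chain.

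The real computation is the single skew factor $\varphi_{\lambda/\mu}(t)$ for $\lambda=\lambda^{(i)}$ and $\mu=\lambda^{(i-1)}$. Here I would use that $T$ is semistandard, which forces each $\lambda^{(i)}/\lambda^{(i-1)}$ to be a horizontal strip; consequently $(\lambda-\mu)_j'\in\{0,1\}$ for every column $j$, so the defining condition $(\lambda-\mu)_j'>(\lambda-\mu)_{j+1}'$ of the index set $I$ selects precisely the columns $j$ that carry a box of the strip while column $j+1$ does not---that is, the right-hand ends of the rows of the strip. I would then check, column by column on the strip, that these are exactly the columns Macdonald singles out and that the exponent $m_j(\mu)$ attached to each of them matches his, so that the product $\prod_{j\in I}(1-t^{m_j(\mu)})$ equals Macdonald's skew coefficient.

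The main obstacle is entirely one of conventions rather than of mathematics. Our conjugate shapes are read off with a shifted column index (the $(i-1)$st column supplies $\lambda_i'$), and it must be checked whether the exponent governing a given column is controlled by the inner shape $\mu$ or by the outer shape $\lambda$, and whether the marked columns are the left or the right ends of the strip's rows; these choices interact with the passage from $Q_\lambda$ to $P_\lambda=Q_\lambda/b_\lambda$. I would pin the dictionary down once and for all by testing it on a generic single horizontal strip, after which the three verifications above become routine and the displayed formula follows.
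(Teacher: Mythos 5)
The paper contains no proof of this theorem---it is imported from \cite{Mac1}, and the subsequent remark likewise just points to Macdonald's book---so your approach of deferring to Macdonald's tableau expansion $Q_\lambda=b_\lambda(t)\,P_\lambda=\sum_T\varphi_T(t)\,x^{\mu}$, grouping tableaux by content, and reducing the entire verification to matching a single horizontal-strip factor is exactly the intended treatment, and your individual checks ($m_i(\lambda)=\lambda_i'-\lambda_{i+1}'$ counts the parts of $\lambda$ equal to $i$; semistandardness forces each $\lambda^{(i)}/\lambda^{(i-1)}$ to be a horizontal strip, so $(\lambda-\mu)'_j\in\{0,1\}$) are correct. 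The convention check you single out is indeed the one genuine issue, and carrying out your strip test resolves it in favour of the outer shape: Macdonald's factor is $(1-t^{m_j(\lambda)})$, consistent with the factors $(1-t^{m_j(\lambda^{(i)})})$ the paper itself uses when proving its second version of the formula, so the $m_j(\mu)$ in the statement as printed is a misprint (for $j\in I$ one has $m_j(\lambda)=m_j(\mu)+1$, so the two readings genuinely differ); note also that $I$ consists of the strip columns whose right neighbour carries no strip box, which are not always the right-hand ends of the strip's rows, since row segments in different rows may abut column-wise.
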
 
\begin{rem}
Note that $b_\lambda(t)$ divides $\varphi_T(t)$. A detailed proof can be found in Macdonald's book \cite{Mac1}.
\end{rem}
It turns out that the Macdonald and the combinatorial Gaussent-Littelmann formula for type $A_n$ are the same. The most noticeable property of the combinatorial Gaussent-Littelmann formula is that we calculate the contribution of a Young tableau columnwise. More precisely this means, that we obtain the contribution of a Young tableau $T$ as a product of contributions of every column of $T$ where as these contributions only depend on the column itself and, if existing, on the column to the right. 
In order to prove that the two formulas are the same the first step is to express the Macdonald formula in such a way that it becomes clear how to calculate it columnwise. Therefore we have to avoid the usage of the associated $\lambda$-chain in the new presentation. Later we show that the formulas are the same by showing that the contribution of each column is the same.\\
Before proceeding further we need to establish some more notation:\\
Let $\lambda=\lambda_1\epsilon_1+\dots+\lambda_n\epsilon_n$ be a dominant coweight and let $r$ be the smallest index such that $\lambda_{r+1}=0$ and let $T=(C_0,\dots,C_k)$ be a Young tableau of shape $\lambda$ with columns $C_i$ with $i \in \{0,\dots,k\}$. Consider the diagram dg($\sum_{i=1}^r (\lambda_i+1)\epsilon_i$). Note that this diagram contains $T$. We now define the augmented tableau $\hat{T}$ to be the diagram dg($\sum_{i=1}^r (\lambda_i+1)\epsilon_i$) where every box that is contained in $T$ and dg($\sum_{i=1}^r (\lambda_i+1)\epsilon_i$) has the same filling as in $T$ and all other boxes are filled with $\infty$. We extend the order on $\mathbb{N}$ to an order on the set $\mathbb{N}\cup\{\infty\}$ by defining $i < \infty$ for every $i \in \mathbb{N}$.\\
Let $u$ be a box of $\hat{T}$. Then $c(u)$ denotes the entry of the box $u$ in $\hat{T}$. Now let $u$ be a box in the $i$th column and the $j$th row of $T$. Then $h(u)$ denotes the head of $u$ which is the set of all boxes $v$ in the $(i+1)$th column and $k$th row for $k \leq j$ in $\hat{T}$ such that $c(u)\leq c(v)$.\\
\textbf{Example}\\
For ${\tiny T= \Yvcentermath1 \young(133,245,36,5)}$ the augmented Young tableau is $$\hat{T}= \Yvcentermath1 \young(133\infty,245\infty,36\infty,5\infty)\; .$$
Let $u$ be the box of $T$ in the 4th row with entry 5. The head of $u$ consists of the box in the 4th row of $\hat{T}$ with entry $\infty$ and the box in the third row of $\hat{T}$ with entry 6.\\ \\
Using this new definitions we can reformulate the original Macdonald formula: 
\begin{satz}{2. version of the Macdonald formula}\\
Let $T$ be a semistandard Young tableau of shape $\lambda$ and content $\mu$. Then
$$ \varphi_T(t)=  \underset{\nexists v \in h(u): c(u)=c(v)}{\prod_{\text{ box } u \in T}}(1-t^{\vert h(u)\vert}).$$
\end{satz}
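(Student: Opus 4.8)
The plan is to turn the defining double product for $\varphi_T$ into a single product indexed by the boxes of $T$. Recall $\varphi_T=\prod_{i=1}^{n+1}\varphi_{\lambda^{(i)}/\lambda^{(i-1)}}$, and that inside each factor one multiplies a term $(1-t^{m_j})$ over the indices $j\in I$. The organizing observation is that the skew shape $\lambda^{(i)}/\lambda^{(i-1)}$ is exactly the set of boxes of $T$ with entry $i$; since $T$ is semistandard and entries strictly increase down columns, these boxes form a horizontal strip, i.e.\ at most one box of entry $i$ per column. Hence each factor is already a product of contributions attached to individual boxes of entry $i$, and the whole proof reduces to (a) identifying, for each $i$, the contributing indices $j\in I$ with the boxes $u$ of entry $i$ satisfying $\nexists\,v\in h(u):c(v)=c(u)$, and (b) matching the exponent $m_j$ with the head size $|h(u)|$.

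First I would identify the contributing boxes. For a horizontal strip, the condition $(\lambda^{(i)}-\lambda^{(i-1)})'_j>(\lambda^{(i)}-\lambda^{(i-1)})'_{j+1}$ defining $j\in I$ says precisely that the strip has a box in the column numbered $j-1$ ($0$-indexed, as in the paper) but none in column $j$; I attach to such $j$ the box $u$ of entry $i$ in that column $c:=j-1$. The point to check is that ``$i$ does not occur in column $c+1$'' is the same as $\nexists\,v\in h(u):c(v)=c(u)$. This rests on one semistandardness lemma: any box in column $c+1$ carrying the entry $i$ lies \emph{weakly above} the row $r$ of $u$. Indeed, weak increase along row $r$ forces the column-$(c+1)$ entry in row $r$ to be $\ge i$, and strict increase down column $c+1$ then places any entry equal to $i$ strictly above row $r$, i.e.\ inside the row range that defines $h(u)$. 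This yields a bijection between the pairs $(i,j\in I)$ and the boxes counted on the right-hand side.

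The computational heart is the identity $|h(u)|=m_j(\lambda^{(i)})=(\lambda^{(i)})'_j-(\lambda^{(i)})'_{j+1}$ for the box $u$ of entry $i$ in column $c=j-1$; note that the exponent is governed by the shape $\lambda^{(i)}$ of boxes with entries $\le i$. Here the augmented tableau $\hat T$ plays its role: since row $k\le r$ of $\hat T$ has $\lambda_k+1\ge\lambda_r+1$ boxes and $c+1\le\lambda_r$, column $c+1$ of $\hat T$ carries a box in every row $1,\dots,r$, so $h(u)$ is extracted from a full block of $r$ boxes. I then split this block by entry: by the lemma above, every box of column $c+1$ with entry $<i$ lies at a row $<r$, and (by the contributing hypothesis) none has entry exactly $i$; therefore $h(u)$ consists of these $r$ boxes minus those of entry $<i$. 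Counting gives $|h(u)|=r-(\lambda^{(i)})'_{j+1}$, where $r=(\lambda^{(i)})'_j$ is the number of boxes of entry $\le i$ in column $c$ and $(\lambda^{(i)})'_{j+1}$ the number in column $c+1$; this is exactly $m_j(\lambda^{(i)})$.

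Assembling the two matchings, the double product $\prod_i\prod_{j\in I}(1-t^{m_j})$ becomes $\prod_u(1-t^{|h(u)|})$ over the contributing boxes, which is the asserted formula. I expect step (b), the exponent computation, to be the main obstacle: it is where the $0$-indexed columns, the conjugate partitions $\lambda^{(i)}$, and the augmentation by $\infty$ must be reconciled, and where one must be careful that the relevant $m_j$ is that of the larger shape $\lambda^{(i)}$. The bijection in step (a), and in particular the semistandardness lemma on where the entry $i$ may sit in the adjacent column, is the only other place a genuine (if short) argument is needed; everything else is bookkeeping.
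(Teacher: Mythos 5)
Your proposal is correct and is essentially the paper's own proof: the paper likewise assigns each factor $(1-t^{m_j(\lambda^{(i)})})$ to the box of entry $i$ in the corresponding column, identifies the contributing boxes as those whose entry does not recur in the column to the right, and asserts $m_j(\lambda^{(i)})=\vert h(u)\vert$. You merely supply the details the paper leaves implicit -- the horizontal-strip observation, the semistandardness lemma locating entries equal to (or less than) $i$ in the adjacent column, and the count $\vert h(u)\vert = r - (\lambda^{(i)})'_{j+1}$ with $r=(\lambda^{(i)})'_j$ -- and you correctly note that the exponent is governed by the larger shape $\lambda^{(i)}$, as in the paper's proof (the $m_j(\mu)$ in the paper's statement of Macdonald's formula is a typo for $m_j(\lambda)$).
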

\begin{proof}
We want to write $\varphi_T(t)$ as a product over all boxes in $T$:
$$ \varphi_T(t)= \prod_{u \in T}\text{contribution}(u).$$
For this purpose we assign every factor $(1-t^{m_j(\lambda^{(i)})})$ of $\varphi_T(t)$ to exactly that box in $T$ that is in the $j$th column and has the entry $i$. These are exactly the boxes in $T$ whose entries do not show up in the column to the right and hence not in the head of it.\\
Let $u$ in $T$ be a box such that $c(u)$ does not appear in the column to the right. Then $m_j(\lambda^{(i)})$ is the number of boxes in the head of $u$.   
\end{proof}
Let $T=(C_0,\dots,C_r)$ be a semistandard Young tableau and $C_i$ the $i$th column with $i \in  \{0,\dots,r\}.$\\ Define $$\varphi_{C_i}(t)=\underset{\nexists v \in h(u): c(u)=c(v)}{\prod_{u \in C_i}}(1-t^{\vert h(u)\vert}) .$$ Then 
$$ \varphi_{T}(t)= \prod_{i \in \{0,\dots,r\}}\varphi_{C_i}(t).$$
Consider $b_{\lambda}(t)= \prod_{i \in \{0,\dots,r\}}\varphi_{m_{i+1}(\lambda)}(t)$. We can assign the factor $\varphi_{m_{i+1}(\lambda)}(t)$ to the $i$th column. With this last observation it follows that also the Macdonald formula can be calculated columnwise:
$$ \frac{\varphi_{T}(t)}{b_{\lambda}(t)}=\frac{\prod_{i \in \{0,\dots,r\}}\varphi_{C_i}(t)}{\prod_{i \in \{0,\dots,r\}}\varphi_{m_{i+1}(\lambda)}(t)}=\prod_{i \in \{0,\dots,r\}}\frac{\varphi_{C_i}(t)}{\varphi_{m_{i+1}(\lambda)}(t)}.$$
\section{Comparison of the formulas}
Let $\lambda$ and $\mu$ be dominant coweights. Recall that the coefficient $L_{\lambda,\mu}(q)$ is defined as
$$ P_{\lambda}(x,q)= \sum_{\mu \in X\check{}_+}q^{-\left\langle \lambda+\mu, \rho\right\rangle}L_{\lambda, \mu}(q)m_{\mu}(x)$$
and that the combinatorial Gaussent-Littelmann formula calculates $L_{\lambda, \mu}(q)$ as a sum over all semistandard Young tableau of shape $\lambda$ and content $\mu$:
$$ L_{\lambda, \mu}(q)=\sum_{T \in SSYT(\lambda, \mu)} c(T).$$
On the other side the Macdonald formula calculates $P_{\lambda}(t)$ as follows:
$$ P_{\lambda}(x,t)=\sum_{\mu \in X_+\check{}} \sum_{T \in SSYT(\lambda,\mu)}  \frac{\varphi_T(t)}{b_{\lambda}(t)}m_{\mu}(x).$$
Thus we derive 
\begin{equation}\label{satz1}
\sum_{T \in SSYT(\lambda,\mu)} q^{-\left\langle \lambda+\mu, \rho\right\rangle} c(T)= \sum_{T \in SSYT(\lambda,\mu)}\frac{\varphi_T(t)}{b_{\lambda}(t)}.
\end{equation}
As we have seen in the sections before $\text{c}(T)$, $\varphi_T(t)$ and $b_{\lambda}(t)$ can be calculated columnwise. Consider $q^{-\left\langle \lambda+\mu, \rho\right\rangle}$. Let $T$ be in $SSYT(\lambda, \mu)$ and $C_i$ the $i$th column of $T$ and $i \in \{0, \dots, r\}$. Every column $C_i$ on its own is again a semistandard Young tableau with shape $\lambda_{(i)}$ and content $\mu_{(i)}$ and $\lambda= \sum_{i}\lambda_{(i)}$ and $\mu = \sum_{i} \mu_{(i)} $ holds. Consequently we can also calculate $q^{-\left\langle \lambda+\mu, \rho\right\rangle}$ columnwise. We show that
\begin{align}\label{main}
c(T)= t^{-\left\langle \lambda+\mu, \rho\right\rangle}\frac{\varphi_T(t)}{b_{\lambda}(t)}
\end{align}
holds for every $T \in SSYT(\lambda, \mu)$ and that especially the contribution of every column of the Young tableau $T$ is the same on both sides:
\begin{satz}\label{maint}
Let $T=(C_0, \dots, C_r)$ be a semistandard Young tableau of type $A_n$ with shape $\lambda$ and content $\mu$ and let $C_i$ be the $i$th column of $T$ for $i \in \{0,\dots,r\}.$ Then the following holds:
$$ c(C_i,C_{i+1})= t^{-\left\langle \lambda_{(i)}+\mu_{(i)}, \rho\right\rangle}\frac{\varphi_{C_i}(t)}{\varphi_{m_{i+1}(\lambda)}(t)}$$
for $i \in \{0, \dots, r-1\}$ and
$$ c(C_r)= t^{-\left\langle \lambda_{(r)}+\mu_{(r)}, \rho\right\rangle}\frac{\varphi_{C_r}(t)}{\varphi_{m_{r+1}(\lambda)}(t)}.$$
\end{satz}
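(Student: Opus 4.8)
The plan is to establish the two displayed per-column identities and then read off \eqref{main} and \eqref{satz1}. Since both sides of Theorem~\ref{maint} factor columnwise, it suffices to compare columns one at a time. First I would record that $m_{i+1}(\lambda)=\lambda'_{i+1}-\lambda'_{i+2}=|C_i|-|C_{i+1}|=:d_i$ is simply the difference of the two column heights. As every simple reflection in $W_V^v$ only permutes entry values and never changes a height, $d_i$ stays fixed along the whole recurrence of Theorem~\ref{rec}, so $\varphi_{d_i}(t)=\varphi_{m_{i+1}(\lambda)}(t)$ is a constant factor carried through unchanged.

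I would treat the rightmost column first, as it is the cleaner half and fixes the normalisation. In $\hat T$ the head of every box of $C_r$ lies in the all-$\infty$ column to its right, so a box in row $j$ has $|h(u)|=j$ and its entry never equals a head entry; hence $\varphi_{C_r}(t)=\prod_{j=1}^{|C_r|}(1-t^j)=\varphi_{|C_r|}(t)=\varphi_{m_{r+1}(\lambda)}(t)$ and the quotient is $1$. It remains to match $c(C_r)=q^{l(w_{D_0})}$ with $t^{-\langle\lambda_{(r)}+\mu_{(r)},\rho\rangle}$. Writing $C_r=\{a_1<\dots<a_p\}$, the chamber $C_{\mathfrak{o}}^-$ corresponds to the sorted column $\{n-p+2,\dots,n+1\}$, so $l(w_{D_0})=\sum_{k=1}^{p}\big((n+1-p+k)-a_k\big)$; with $\langle\epsilon_a,\rho\rangle=\tfrac12(n+2-2a)$ this equals $\langle\omega_p+\sum_k\epsilon_{a_k},\rho\rangle=\langle\lambda_{(r)}+\mu_{(r)},\rho\rangle$, which proves the second identity.

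For an interior column ($i<r$) I would induct on $l(C_i)$, the number of simple reflections sorting $C_i$ into $C_V^-$, via the recurrence behind the Proposition $c((E\supset V\subset F))=c(C_F,C_E)$, with $C_F=C_i$ and $C_E=C_{i+1}$. At the base, $C_F$ is already sorted and $c=1$; there the same head count as for $C_r$ gives $\varphi_{C_F}(t)=\varphi_{d_i}(t)$, and a short computation gives $\langle\lambda_{(i)}+\mu_{(i)},\rho\rangle=0$, so the right-hand side is $1$ as well. For the inductive step pick $s_j$ increasing $C_F$; it turns an entry $j$ into $j+1$, so the content gains $\epsilon_{j+1}-\epsilon_j$ and the exponent $\langle\lambda_{(i)}+\mu_{(i)},\rho\rangle$ drops by exactly $1$, producing a factor $q$. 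Writing $\varphi^{C_E}_{C_F}(t)$ for the Macdonald column factor of $C_F$ taken relative to $C_E$, after this exponent shift and cancelling $\varphi_{d_i}(t)$ the two cases of the recurrence become
\begin{align*}
\varphi^{C_E}_{C_F}(t) &= \varphi^{s_jC_E}_{s_jC_F}(t) \quad\text{(case 1),}\\
q\,\varphi^{C_E}_{C_F}(t) &= \varphi^{s_jC_E}_{s_jC_F}(t)+(q-1)\,\varphi^{C_E}_{s_jC_F}(t) \quad\text{(case 2).}
\end{align*}
Case~1, where $s_j$ increases or fixes $C_E$, reduces to the single equality $\varphi^{C_E}_{C_F}(t)=\varphi^{s_jC_E}_{s_jC_F}(t)$, which I would verify by a short case check on the effect of $s_j$ on $C_E$; this is the straightforward half.

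The hard part will be case~2, where $s_j$ increases $C_F$ (so $j\in C_F$, $j+1\notin C_F$) but decreases $C_E$ (so $j+1\in C_E$, $j\notin C_E$). Here I would first note that all box-factors other than the one attached to the box $u\in C_F$ with entry $j$ are common to the three products: for every other box the entry differs from $j$ and $j+1$, and swapping $j\leftrightarrow j+1$ in the right column alters neither its head size nor whether its entry occurs in that head. The analysis then localises at $u$: semistandardness forbids the box of $C_E$ carrying $j+1$ from sitting strictly below $u$ (this would force the absent value $j$ into the row of $u$), so it lies weakly above $u$, and a direct comparison of heads shows that $u$ contributes $(1-t^{m})$ to $\varphi^{C_E}_{C_F}$, contributes $(1-t^{m-1})$ to $\varphi^{s_jC_E}_{s_jC_F}$, and is suppressed in $\varphi^{C_E}_{s_jC_F}$ because its new entry $j+1$ now appears in its head. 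Case~2 thus collapses to the scalar identity $q(1-t^{m})=(1-t^{m-1})+(q-1)$, which holds precisely because $qt=1$ forces $qt^{m}=t^{m-1}$. Carrying out this head bookkeeping is the crux; once it is in place the induction closes, and together with the rightmost-column computation and the columnwise factorization it yields \eqref{main} and hence \eqref{satz1}.
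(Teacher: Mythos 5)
Your proposal is correct, and for the interior columns it is essentially the paper's own argument: the same induction on the number of simple reflections sorting $C_i$ into the anti-dominant position, the same use of the recurrence behind Theorem \ref{rec}, and the same localization of the comparison at the single box $u$ whose Macdonald factor changes. You deviate in two places, both worth recording. First, for the last column the paper computes nothing: it invokes the already-established identity (\ref{satz1}) (which rests on the validity of both the Gaussent-Littelmann and the Macdonald formulas) together with the observation that a single column is the unique semistandard tableau of its shape and content. Your direct verification --- $\varphi_{C_r}(t)=\varphi_{m_{r+1}(\lambda)}(t)$ and $l(w_{D_0})=\sum_{k=1}^{p}\bigl((n+1-p+k)-a_k\bigr)=\langle\lambda_{(r)}+\mu_{(r)},\rho\rangle$, both of which check out --- is longer but buys independence: combined with your induction it establishes the columnwise equality without presupposing that the two formulas agree globally, whereas the paper's shortcut quietly uses that fact. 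Second, you compress the paper's three inductive cases into two and, more substantially, you merge the paper's two subcases of the hard case ($s_j(T_i)$ semistandard or not) into the single scalar identity $q(1-t^{m})=(1-t^{m-1})+(q-1)$. This does work, but one sentence is missing to make it airtight: when the $j{+}1$-box of $C_E$ sits in the very row of $u$ --- exactly the situation in which $s_j(T_i)$ fails to be semistandard --- the tree of the combinatorial formula has no $s_j$-branch at all, so the recurrence degenerates to $c(C_F,C_E)=(q-1)\,c(s_jC_F,C_E)$, and moreover $c(s_jC_F,s_jC_E)$ is not even defined since that tableau is not semistandard. Your identity still matches because in this situation $m=1$ and the phantom factor $1-t^{m-1}=1-t^{0}=0$ annihilates the corresponding Macdonald product; you should state this degeneration explicitly rather than leave it implicit in the phrase ``weakly above''. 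Similarly, your base case for interior columns asserts $\varphi_{C_F}(t)=\varphi_{d_i}(t)$ ``by the same head count as for $C_r$'', but there the heads mix $\infty$-boxes with genuine entries of $C_{i+1}$, and the count $\vert h(b_j)\vert=j$ requires the paper's short two-case bookkeeping; with those two details written out, your proof closes.
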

We know from \cite{Mac1} that $\varphi_{m_{i+1}(\lambda)}(t)$ divides $\varphi_{C_i}(t)$. Additionally we know that $\left\langle \lambda_{(i)}+\mu_{(i)}, \rho\right\rangle$ is bigger then the highest exponent in $\frac{\varphi_{C_i}(t)}{\varphi_{m_{i+1}(\lambda)}(t)}$. Consequently, we obtain:
\begin{align*}
&t^{-\left\langle \lambda_{(i)}+\mu_{(i)}, \rho\right\rangle}\frac{\varphi_{C_i}(t)}{\varphi_{m_{i+1}(\lambda)}(t)}\\
&=
 t^{-a}(1-t)^{a_1}(1-t^2)^{a_2}\dots\\
&=t^{-a+\sum_iia_i}(t^{-1}-1)^{a_1}(t^{-2}-1)^{a_2}\dots \\
&=q^{a-\sum_iia_i}(q-1)^{a_1}(q^2-1)^{a_2}\dots \\
&=q^{a-\sum_iia_i}(q-1)^{a_1}(1+q)^{a_2}(q-1)^{a_2}(1+q+q^2)^{a_3}(q-1)^{a_3}\dots\\
&=q^{a-\sum_iia_i}(q-1)^{\sum_ia_i}(1+q)^{a_2}(1+q+q^2)^{a_3}\dots
\end{align*}
for $i \in \{1,\dots ,r\}$.\\
Hence for every column $C_i$ of a semistandard Young tableau there exist numbers $b,b_1,\dots \in \mathbb{N}$ such that 
$$ t^{-\left\langle \lambda_{(i)}+\mu_{(i)}, \rho\right\rangle}\frac{\varphi_{C_i}(t)}{\varphi_{m_{i+1}(\lambda)}(t)}= q^{b}(q-1)^{\sum_ib_i}(1+q)^{b_1}(1+q+q^2)^{b_2}\dots .$$
To simplify the notation define $$M(C_i,C_{i+1}) := t^{-\left\langle \lambda_{(i)}+\mu_{(i)}, \rho\right\rangle}\frac{\varphi_{C_i}(t)}{\varphi_{m_{i+1}(\lambda)}(t)}$$
for $i \neq r$ and $$M(C_r) := t^{-\left\langle \lambda_{(r)}+\mu_{(r)}, \rho\right\rangle}\frac{\varphi_{C_r}(t)}{\varphi_{m_{r+1}(\lambda)}(t)}.$$
Like in the Gaussent-Littelmann formula this notation shall underline that the contribution of the columns of the tableau to the formula depends on the column itself and, if existing, on the column to the right.\\
\begin{rem}
\begin{itemize}
\item[(1)] In the Gaussent-Littelmann formula it is not at all obvious that the contribution $$ c((E_{j-1} \supset V_j \subset E_{j}))=\sum_{\textbf{c} \in \Gamma^+_{S_{V_j}^j}(\textbf{i}_j, op)}q^{t(\textbf{c})}(q-1)^{r(\textbf{c})}$$ is always a product $$q^{b}(q-1)^{\sum_ib_i}(1+q)^{b_1}(1+q+q^2)^{b_2}\dots $$ for some $b,b_1,\dots \in \mathbb{N}$.
\item[(2)] As already mentioned the Gaussent-Littelmann formula is developed by describing a certain intersection in the affine Grassmannian and counting the points over a finite field $\mathbb{F}_q$. In fact, the polynomial $c(C_i,C_{i+1})$ counts the points over $\mathbb{F}_q$ of a subvariety $Min(C_i,C_{i+1})$ of a generalized Grassmannian $H/R$, where $H$ and $R$ are determined by the positively folded combinatorial one-skeleton gallery $\delta_T$. For more details regarding the geometry behind the Gaussent-Littelmann formula see \cite{GL1}. Now, our Theorem \ref{maint} suggests that $Min(C_i,C_{i+1})$ is isomorphic to a product of $b_j$ times $\mathbb{P}^j$ for $j \in \mathbb{N}$, $b$ times $\mathbb{C}$ and $\sum_{j}b_j$ times $\mathbb{C^*}$.
\end{itemize}  
\end{rem}
\subsection{Proof of Equality}
This section is devoted to proving the equality of the two formulas by showing that the contribution of every column of the semistandard Young tableau $T\in SSYT(\lambda, \mu)$ is the same to both formulas.
\begin{proof}
First we consider the last column $C_r$ of $T$ with content $\mu_{(r)}$ and shape $\lambda_{(r)}$. Because $C_r$ is the only semistandard Young tableau with content $\mu_{(r)}$ and shape $\lambda_{(r)}$ the coefficent $L_{\mu_{(r)},\lambda_{(r)}}$ only consists of one summand coming from the column $C_r$ itself. With (\ref{satz1}) the claim follows.\\
The proof for the other columns is more intricate: Let $C_i$ be a column of $T$ that has a column to the right i.e. $i \neq r$. As mentioned before the contribution depends on $C_{i+1}$ on both sides of the equation. Thus our study in this part of the proof concerns semistandard 2-column Young tableaux: Let $T_i$ denote the semistandard 2-column Young tableau where $C_i$ is the first and $C_{i+1}$ the second column. Let $s_{i_1}, \dots, s_{i_l}$ be a sequence of reflections such that $s_{i_j}$ increases the column $s_{i_{j-1}}\dots s_{i_1}(C_i)$ for every $j \in \{1, \dots, l\}$ and there exists no simple reflection that increases $s_{i_l}\dots s_{i_1}(C_i)$. We prove the claim by induction over $l$.\\
\textbf{Basis:  $l = 0$} \\
Let us start with the left hand side, the combinatorial Gaussent-Littelmann formula: For $l=0$ the contribution $c(C_i,C_{i+1})$ is 1 because we only have one single simple path consisting of one vertex.\\
Now consider the right hand side, the Macdonald formula: Suppose $C_i$ consists of $m$ boxes. For $l=0$ the column $C_i$ has the entries $n-m+2, \dots, n+1$. 
 The shape of $C_i$ is $\lambda_{(i)}=\epsilon_1+\epsilon_2+\dots +\epsilon_m$ and the content is $\mu_{(i)}= \epsilon_{n-m+2}+ \dots +\epsilon_{n+1}$ and we derive 
\begin{align*}
\left\langle \lambda_{(i)}+ \mu_{(i)}, \rho \right\rangle &= 1/2(\sum_{j=0}^{m-1}n-1-2j)+1/2(\sum_{k=n+1-m}^{n+1}n-1-2k)\\
&=1/2(\sum_{j=0}^{m-1}n-1-2j)-1/2(\sum_{k=0}^{m-1}n-1-2k)\\
&=0.
\end{align*}
Thus $t^{-\left\langle \lambda_{(i)}+ \mu_{(i)}, \rho \right\rangle}=1$. It remains to show that $\frac{\varphi_{C_i}(t)}{\varphi_{m_{i+1}(\lambda)}(t)}$ equals 1:\\ 
Consider $\varphi_{C_i}(t)=\underset{\nexists v \in h(u): c(u)=c(v)}{\prod_{u \in C_i}}(1-t^{\vert h(u)\vert})$. The entries in $C_{i+1}$ have to be bigger than or equal to the entries to the left in $C_i$ because the Young tableau $T$ is semistandard. But the entries in $C_i$ are already as big as possible, thus the entries of $C_{i+1}$ are also entries in $C_i$. Consequently there are exactly $m_{i+1}(\lambda)$ boxes in $C_i$ that make a contribution to the formula. Let $b_{j}$ be the $j$th box (from top to bottom) in $C_i$ whose entry $e_j$ is not an entry in $C_{i+1}$. The contribution of this box to the formula is $(1-t^{\vert h(b_j)\vert})$ with 
$$\vert h(b_j)\vert= \vert \{ \text{ boxes with } \infty \text{ in } h(b_j)\} \vert +\vert\{\text{boxes with entry in $\mathbb{N}$ in } h(b_j) \}\vert.$$
We need to check two cases:\\
\textbf{1. case: $\exists v \in h(b_j) \text{ with entry } \infty$:}\\
Then we have
$$ \vert \{ \text{ boxes with } \infty \text{ in } h(b_j)\} \vert =\\
m_{i+1}(\lambda) -\vert\{\text{boxes in } C_i \text{ at the top of } b_j\}\vert $$
and
\begin{align*}  
\vert \{\text{boxes with entry in } & \mathbb{N} \text{ in } h(b_j) \} \vert \\
= &\vert\{\text{boxes in } C_i \text{ at the top of } b_j\}\vert - (m_{i+1}(\lambda)-j).
\end{align*}
Using the last two equations we obtain $\vert h(b_j)\vert= j$.\\
\textbf{2. case: $\nexists v \in h(b_j) \text{ with entry } \infty$:}\\
Then we clearly have $\vert h(b_j)\vert= j$.\\
Thus, for all $j \in \{1, \dots, m_{i+1}\}$ we have $\vert h(b_j)\vert= j$ and consequently 
$$ \varphi_{C_i}(t)= \prod_{k=1 }^{m_{i+1}(\lambda)}(1-t^{k})=\varphi_{m_{i+1}(\lambda)}(t).$$
It follows that $M(C_i,C_{i+1})$ is 1.\\
\textbf{Induction step:} $l \mapsto l+1$\\
Let $T_i$ be a semistandard 2-column Young tableau where $C_i$ is the first and $C_{i+1}$ is the second column. Because $l \neq 0$ there exists a simple reflection $s_j$ that increases the column $C_i$. 
There a three cases to check:\\
\textbf{1. case: $s_j(C_{i+1})=C_{i+1}$}\\
By induction hypothesis we know that $c(s_j(C_i),s_j(C_{i+1}))$ equals \\$M(s_j(C_i),s_j(C_{i+1}))$ say 
$$q^a(q-1)^b(1+q)^{a_1}\dots(1+q+\dots+q^{k})^{a_k}\dots .$$
The combinatorial version of the Gaussent-Littelmann formula now tells us that we obtain $c(C_i,C_{i+1})$ simply by multiplying $c(s_j(C_i),s_j(C_{i+1}))$  by $q$ hence we have:
$$ c(C_i,C_{i+1})=q^{a+1}(q-1)^b(1+q)^{a_1}\dots(1+q+\dots+q^{k})^{a_k}\dots.$$
Next, we need to calculate the contribution $M(C_i,C_{i+1})$ to the Macdonald formula from  $M(s_j(C_i),s_j(C_{i+1}))$:  
Because $s_j$  increases the column $C_i$ we know that there is a $j$ but no $j+1$ in $C_i$  and $s_j(C_{i+1})=C_{i+1}$ means that either $j$ and $j+1$ or neither $j$ nor $j+1$ are in $C_{i+1}$.\\
Let us first consider the case when $j$ and $j+1$ are entries in $C_{i+1}$. We know that $M(C_i,C_{i+1})$ is a product over all boxes in $C_i$ where the contribution of a box depends on the entries in $C_{i+1}$. Apply $s_j$ to $s_j(T_i)$ only changes a single box in the first column, thus we only need to exchange the contribution of this box, which is the box with entry $j+1$, and the contribution of the box with entry $j$ to $M(C_i,C_{i+1})$. But because $j$ and $j+1$ are in the second column of $T_i$ and of $s_j(T_i)$ the contribution of the box is in both cases 1, thus we derive that only the content of the tableau changes. We get that we have to multiply the contribution $M(s_j(C_i),s_j(C_{i+1}))$ by $t^{-1}=q$ to obtain the contribution  $M(C_i,C_{i+1})$. This is what we did in the combinatorial Gaussent-Littelmann formula.\\
Now consider the case where neither $j$ nor $j+1$ are entries in $C_{i+1}$. We again need to exchange the contribution of the box with entry $j+1$ to $M(s_j(C_i),s_j(C_{i+1}))$ with the contribution of the box with entry $j$ to \\ $M(C_i,C_{i+1})$. But the contributions of the boxes are the same because the heads are the same. Again only the content of the tableau changes and we obtain the same result as above.\\
\textbf{2. case: $s_j$ increases $C_{i+1}$}\\
We again know by hypothesis that the contributions of $(s_j(C_i),s_j(C_{i+1}))$ is the same on both sides, say
$$ q^{a}(q-1)^b(1+q)^{a_1}\dots(1+q+\dots+q^{k})^{a_k}\dots .$$
As in the first case it follows that 
$$ q^{a+1}(q-1)^b(1+q)^{a_1}\dots(1+q+\dots+q^{k})^{a_k}\dots $$
is $c(s_j(C_i),s_j(C_{i+1}))$. Now we consider the Macdonald formula: In both columns of the Young tableau $T_i$ there is a $j$ but no $j+1$. Consequently in both columns of the Young tableau $s_j(T_i)$ there is a $j+1$ but no $j$. The box in $C_i$ with entry $j$ has contribution 1 to $M(C_i,C_{i+1})$ because there is a $j$ in $C_{i+1}$. By the same argument we derive that the contribution of the box with entry $j+1$ to $M(s_j(C_i),s_j(C_{i+1}))$ is 1. As in the first case only the content of the tableau changes and we have to multiply the contribution $M(s_j(C_i),s_j(C_{i+1}))$ by $t^{-1}=q$ to obtain the contribution $M(C_i,C_{i+1})$. And this is what we did in the combinatorial Gaussent-Littelmann formula.\\
\textbf{3. case: $s_j$ decreases $C_{i+1}$}\\
Suppose that $s_j(T_i)$ is semistandard. We know by induction hypothesis that $M(s_j(C_i),s_j(C_{i+1}))=c(s_j(C_i),s_j(C_{i+1}))$ and $M(s_j(C_i),C_{i+1})=c(s_j(C_i),C_{i+1})$. Let $$ M(s_j(C_i),s_j(C_{i+1}))=q^a(q-1)^b(1+q)^{a_1}\dots(1+q+\dots+q^{k})^{a_k}\dots .$$ 
We can express $M(s_j(C_i),C_{i+1})$ depending on $M(s_j(C_i),s_j(C_{i+1}))$:
\begin{align*}
 M(s_j(C_i),C_{i+1})&= \frac{1}{(1-t^{l+1})}M(s_j(C_i),s_j(C_{i+1}))\\
 &= \frac{t^{-(l+1)}}{(t^{-(l+1)}-1)}M(s_j(C_i),s_j(C_{i+1}))\\
 &= \frac{q^{l+1}}{(q-1)(1+q+\dots q^{l})}M(s_j(C_i),s_j(C_{i+1}))\\
 &=q^{a+l+1}(q-1)^{b-1}(1+q)^{a_1}\dots(1+q+\dots+q^{l})^{a_l-1}\dots
\end{align*}
where $l+1=h(u)$ and $u$ is the box in $s_j(C_{i})$ with entry $j+1$ because $u$ has contribution $(1-t^{l+1})$ to $M(s_j(C_i),s_j(C_{i+1}))$ and contribution $1$ to $M(s_j(C_i),C_{i+1})$ and $u$ is the only box with different contributions.\\  
In the combinatorial Gaussent-Littelmann formula we obtain $c(C_i,C_{i+1})$ by multiplying $c(s_j(C_i),s_j(C_{i+1}))$ by 1 and add $c(s_j(C_i),C_{i+1})$ multiplied by $(q-1)$:
\begin{align*}
 & q^a(q-1)^b(1+q)^{a_1}\dots(1+q+\dots+q^{k})^{a_k}\dots\\
& \; \; \;\; \; \; \; + q^{a+l+1}(q-1)^{b}(1+q)^{a_1}\dots(1+q+\dots+q^{l})^{a_l-1}\dots\\
= \; \; \; &((1+\dots+ q^{l})+q^{l+1})(q^a(q-1)^b(1+q)^{a_1}\dots(1+q+\dots+q^{l})^{a_l-1})\dots\\
= \; \; \; &q^a(q-1)^b(1+q)^{a_1}\dots(1+ \dots+q^l)^{a_l-1}(1+\dots q^{l+1})^{a_{l+1}+1}\dots .\\ 
\end{align*}
We now need to compute $M(C_i,C_{i+1})$ from $M(s_j(C_i),s_j(C_{i+1}))$: We know that there is a $j$ but no $j+1$ in $C_i$ and that we have no box with entry $j$ but one with entry $j+1$ in $C_{i+1}$. Because $s_j(T_i)$ and $id_j(T_i)$ are both semistandard we know that the box with entry $j$ in $C_i$ cannot be next to the box in $C_{i+1}$ with entry $j+1$. Consider the box $u$ in $s_j(C_i)$ with the entry $j+1$. Then the box in $s_j(C_{i+1})$ with entry $j$ is not in the head of $u$. After applying $s_j$ to $s_j(T_i)$ the box $u$ in $C_i$ has entry $j$ and now the box with entry $j+1$ in $C_{i+1}$ is in the head of $u$. Further, the content of the tableau also changes but again all other contributions of boxes stay the same. Consequently we obtain $M(C_i,C_{i+1})$ as
\begin{align*}
M(C_i,C_{i+1})&= \frac{(1-t^{l+2})}{(1-t^{l+1})}t^{-1}M(s_j(C_i),s_j(C_{i+1}))\\
 &= \frac{(1+\dots+q^{l+1})}{(1+\dots+q^l)}M(s_j(C_i),s_j(C_{i+1}))
\end{align*} 
and this is exactly what we did in the combinatorial Gaussent-Littelmann formula.\\
Now suppose that $s_j(T_i)$ is not semistandard. By induction hypothesis we know that $M(s_j(C_i),C_{i+1})=c(s_j(C_i),C_{i+1}).$ In the combinatorial Gaussent-Littelmann formula we obtain $c(C_i,C_{i+1})$ by multiplying $c(s_j(C_i),C_{i+1})$ by $(q-1)$. Now we need to consider what happens in the Macdonald formula: The column $C_i$ contains a $j$ but no $j+1$ and $C_{i+1}$ contains a $j+1$ but no $j$. Because $s_j(T_i)$  is not semistandard we know that the box that contains $j$ in $C_{i}$ is next to the box in $C_{i+1}$ that contains $j+1$. In $M(s_j(C_i),C_{i+1})$ the box in $s_j(C_i)$ with entry $j+1$ has contribution 1 but in $M(C_i,C_{i+1})$ this box has entry $j$ and because there is no $j$ in  $C_{i+1}$ the contribution is different from 1. The head of the box in $C_i$ with entry $j$ in $T_i$ only consists of the box in $C_{i+1}$ with entry $j+1$.  The content of the tableau also changes but all contributions of the other boxes to the formula stay the same and we derive that we have to multiply $M(s_j(C_i),C_{i+1})$ by $(1-t)t^{-1}=(q-1)$ to get $M(C_i,C_{i+1})$. This is again what we did in the combinatorial Gaussent-Littelmann formula.
\end{proof}
\section{Examples}
\subsection{Type $A_2$}\text{} \\
For the semistandard Young tableau ${\tiny T=(C_0,C_1)= \Yvcentermath1 \young(13,2)}$ the corresponding combinatorial positively folded one-skeleton gallery in the standard apartment of the affine buidling is $\delta_T=\delta_{s_2s_1(E_{\omega_1})}*\delta_{E_{\omega_2}}=(\mathfrak{o}\subset E_0\supset V_1 \subset E_1 \supset \mathfrak{o}):$
	\begin{center}
	
   	\begin{tikzpicture}
   
   \draw (0,0)-- (0:2cm);
   \draw (0,0)-- (60:2cm); 
   \draw (0,0)-- (120:2cm);
   \draw (0,0)-- (180:2cm);    
   \draw (0,0)-- (240:2cm);  
   \draw (0,0)-- (300:2cm);
   \draw (0,0)-- (360:2cm); 
   \draw (-1cm,0)-- ++(60:2cm);
   \draw (-1cm,0)-- ++(240:1cm);
   \draw (-1cm,0)-- ++(120:1cm);
   \draw (-1cm,0)-- ++(300:2cm);
   \draw (1cm,0)-- ++(60:1cm);
   \draw (1cm,0)-- ++(120:2cm);
   \draw (1cm,0)-- ++(240:2cm);
   \draw (1cm,0)-- ++(300:1cm);
   \draw (-1.5cm,0.87cm)--(1.5cm,0.87cm);
   \draw (-1.5cm,-0.87cm)--(1.5cm,-0.87cm);
   \coordinate (o) at (0,0) node[above right] {\tiny{$\mathfrak{o}$}} ;
   \fill(o) circle (1pt);
   \coordinate (e1) at (0.5cm,0.87cm);
   \fill(e1) circle (1pt);
   \node[above right] at (e1) {\tiny{$\epsilon_1$}};
   \coordinate (e2) at (-1cm,0cm);
   \fill(e2) circle (1pt);
   \node[above left] at (e2) {\tiny{$\epsilon_2$}};
   \coordinate (e3) at (0.5cm,-0.87cm);
   \node[above right] at (e3) {\tiny{$\epsilon_3$}};
   \draw(-1cm,1.73cm)--(1cm,1.73cm);
   \draw(-1cm,1.73cm)--(-2cm,0)--(-1cm,-1.73cm)--(1cm,-1.73cm)--(2cm,0)--(1cm,1.73cm);
   \draw[dotted] (-2cm,0)-- ++(120:0.5cm);
   \draw[dotted] (-2cm,0)-- ++(180:0.5cm);
   \draw[dotted] (-2cm,0)-- ++(240:0.5cm);
   \draw[dotted] (-1.5cm,0.87cm)-- ++(120:0.5cm);
   \draw[dotted] (-1.5cm,0.87cm)-- ++(180:0.5cm);
   \draw[dotted] (-1cm,1.73cm)-- ++(60:0.5cm);
   \draw[dotted] (-1cm,1.73cm)-- ++(120:0.5cm);
   \draw[dotted] (-1cm,1.73cm)-- ++(180:0.5cm);
   \draw[dotted] (0,1.73cm)-- ++(60:0.5cm);
   \draw[dotted] (0,1.73cm)-- ++(120:0.5cm);
   \draw[dotted] (1cm,1.73cm)-- ++(0:0.5cm);
   \draw[dotted] (1cm,1.73cm)-- ++(60:0.5cm);
   \draw[dotted] (1cm,1.73cm)-- ++(120:0.5cm);
   \draw[dotted] (1.5cm,0.87cm)-- ++(0:0.5cm);
   \draw[dotted] (1.5cm,0.87cm)-- ++(60:0.5cm);
   \draw[dotted] (2cm,0)-- ++(0:0.5cm);
   \draw[dotted] (2cm,0)-- ++(60:0.5cm);
   \draw[dotted] (2cm,0)-- ++(300:0.5cm);
   \draw[dotted] (1.5cm,-0.87cm)-- ++(0:0.5cm);
   \draw[dotted] (1.5cm,-0.87cm)-- ++(300:0.5cm);
   \draw[dotted] (1cm,-1.73cm)-- ++(0:0.5cm);
   \draw[dotted] (1cm,-1.73cm)-- ++(300:0.5cm);
   \draw[dotted] (1cm,-1.73cm)-- ++(240:0.5cm);
   \draw[dotted] (0,-1.73cm)-- ++(300:0.5cm);
   \draw[dotted] (0,-1.73cm)-- ++(240:0.5cm);
   \draw[dotted] (-1cm,-1.73cm)-- ++(300:0.5cm);
   \draw[dotted] (-1cm,-1.73cm)-- ++(180:0.5cm);
   \draw[dotted] (-1cm,-1.73cm)-- ++(240:0.5cm);
   \draw[dotted] (-1.5cm,-0.87cm)-- ++(240:0.5cm);
   \draw[dotted] (-1.5cm,-0.87cm)-- ++(180:0.5cm);
   
   \draw[->,red] (0,0)-- ++(300:1cm)--++(0:0.05cm)--++(120:1cm);
   \fill[red](o) circle (1pt);
   \fill[red](e3) circle (1pt);

     \end{tikzpicture}  

	\end{center}
We now calculate the contribution of $T$ (resp. $\delta_{T}$) to the formulas:\\	 \\
\textbf{Gaussent-Littelmann formula}\\
Consider the gallery $(\mathfrak{o}\subset (E_0)_{\mathfrak{o}})$ associated to the last column of $T$ at the origin $\mathfrak{o}$, namely ${\tiny  \Yvcentermath1 \young(3)}$, in the standard apartment of the residue building at $\mathfrak{o}$:
\begin{center}
	
   	\begin{tikzpicture}[x=0.4cm,y=0.4cm]
   	
   	\coordinate(o) at (0,0) node[red, above right] {\tiny{$\mathfrak{o}$}};
   	\fill[red](o) circle (1pt);
   	\draw[->] (0,0)-- ++(60:2cm);
   	\draw[->] (0,0)-- ++(120:2cm);
   	\draw[->] (0,0)-- ++(180:2cm);
    \draw[->] (0,0)-- ++(240:2cm);
   	\draw[->] (0,0)-- ++(300:2cm);
   	\draw[->] (0,0)-- ++(0:2cm);
   	\draw[red,->](0,0)-- ++(300:2cm);
   	\draw[red] (0.3cm,-0.6cm) node[ right] {\tiny{${(E_0)}_{\mathfrak{o}}$}}; 
   	\draw (0,-1cm) node[below] {\small{$C_{\mathfrak{o}}^-=D_0$}};
   	  	
   	\end{tikzpicture}
   	
  \end{center} 	 	
Thus, $w_{D_0}=id$ and $$c(E_0)=q^0=1.$$
Now consider the gallery $((E_0)_{V_1} \supset V_1 \subset (E_1)_{V_1})$ associated to the 2-column Young tableau ${\tiny \Yvcentermath1 \young(13,2)}$ at $V_1=\epsilon_3$, in the standard apartment of the residue building at $V_1$:   
\begin{center}
	
   	\begin{tikzpicture}[x=0.4cm,y=0.4cm]
   	
   	\coordinate(o) at (0,0) node[red, above right] {\tiny{$V_1$}};
   	\draw[->] (0,0)-- ++(60:2cm);
   	\draw[->] (0,0)-- ++(120:2cm);
   	\draw[->] (0,0)-- ++(180:2cm);
    \draw[->] (0,0)-- ++(240:2cm);
   	\draw[->] (0,0)-- ++(300:2cm);
   	\draw[->] (0,0)-- ++(0:2cm);
   	\draw[red,->] (-1cm,1.73cm)-- ++(300:2cm)-- ++(0:0.05cm)-- ++(120:2cm);
   	\fill[red](o) circle (1pt);
   	\draw (0,-1cm) node[below] {\small{$C_{V_1}^-$}};
   	\draw (-1.2cm,0.6cm) node {\small{$S^1_{V_1}=D_1$}};
   	\draw (-0.5cm,1cm) node[left,red] {\tiny{$(E_0)_{V_1}$}};
   	\draw (-0.5cm,1cm) node[right,red] {\tiny{$(E_1)_{V_1}$}};
   	\draw (-2cm,0) node[above right] {\tiny{$-$}};
   	\draw (-2cm,0) node[below right] {\tiny{$+$}};
   	\draw (-1cm,1.73cm) node[below left] {\tiny{$-$}};
   	\draw (-1cm,1.73cm) node[right] {\tiny{$+$}};
   	\draw (1cm,1.73cm) node[left] {\tiny{$-$}};
   	\draw (1cm,1.73cm) node[below right] {\tiny{$+$}};
   	\draw (2cm,0) node[above left] {\tiny{$-$}};
   	\draw (2cm,0) node[below left] {\tiny{$+$}};
   	\draw (1cm,-1.73cm) node[left] {\tiny{$-$}};
   	\draw (1cm,-1.73cm) node[above right] {\tiny{$+$}};
   	\draw (-1cm,-1.73cm) node[above left] {\tiny{$-$}};
   	\draw (-1cm,-1.73cm) node[right] {\tiny{$+$}};
   	\draw (-1cm,0) node {\tiny{$1$}};
   	\draw (-0.5cm,0.87cm) node {\tiny{$2$}};
   	\draw (0.5cm,0.87cm) node {\tiny{$1$}};
   	\draw (1cm,0) node {\tiny{$2$}};
   	\draw (0.5cm,-0.87cm) node {\tiny{$1$}};
   	\draw (-0.5cm,-0.87cm) node {\tiny{$2$}};
   	\draw[dotted,->] (0,-1cm)-- ++(150:1cm)-- ++(90:1cm);

	\end{tikzpicture}
   	
  \end{center} 
In the picture above we label the walls of the anti-dominant chamber $C_{V_1}^-$ with $i$ if the wall is contained in the hyperplane $H_{(\alpha_i,n_i)}$ for a simple root $\alpha_i \in \phi_{V_1}$. The labeling of the walls of the chambers in $\mathbb{A}_{V_1}$ is $W_{V_1}^a$-equivariant so that the labeling of the walls of $w(C_{V_1}^-)$ for $w \in W_{V_1}^a$ is the image of the labeling of the walls of $C_{V_1}^-$. The labeling of a wall is exactly its type.\\
The signs on the hyperplanes indicate on which side of the hyperplane the chamber $S^1_{V_1}$ is. More precisely, the negative half-space of the hyperplane contains $S^1_{V_1}$.\\
In type $A_2$, $W_V^v$ coincides with $W$ for every vertex $V$ in $\mathbb{A}$. The reduced expression for the Weyl group element $w \in W_{V_1}^a$ that sends $C_{V_1}^-$ to $D_1$ is $s_2s_1$ indicated in the picture above by the dotted line. The galleries of chambers in $\Gamma^+_{S^1_{V_1}}((2,1),op)$ are illustrated below:\\
\begin{center}
	
   	\begin{tikzpicture}[x=0.4cm,y=0.4cm]
   	
   	\coordinate(o) at (0,0) node[red, above right] {\tiny{$V_1$}};
   	\draw[->] (0,0)-- ++(60:2cm);
   	\draw[->] (0,0)-- ++(120:2cm);
   	\draw[->] (0,0)-- ++(180:2cm);
    \draw[->] (0,0)-- ++(240:2cm);
   	\draw[->] (0,0)-- ++(300:2cm);
   	\draw[->] (0,0)-- ++(0:2cm);
   	\draw[red,->] (-1cm,1.73cm)-- ++(300:2cm)-- ++(0:0.05cm)-- ++(120:2cm);
   	\fill[red](o) circle (1pt);
   	\draw (0,-1cm) node[below] {\small{$C_{V_1}^-$}};
   	\draw (-1.2cm,0.6cm) node {\small{$S^1_{V_1}=D_1$}};
   	\draw (-0.5cm,1cm) node[left,red] {\tiny{$(E_0)_{V_1}$}};
   	\draw (-0.5cm,1cm) node[right,red] {\tiny{$(E_1)_{V_1}$}};
   	\draw (-2cm,0) node[above right] {\tiny{$-$}};
   	\draw (-2cm,0) node[below right] {\tiny{$+$}};
   	\draw (-1cm,1.73cm) node[below left] {\tiny{$-$}};
   	\draw (-1cm,1.73cm) node[right] {\tiny{$+$}};
   	\draw (1cm,1.73cm) node[left] {\tiny{$-$}};
   	\draw (1cm,1.73cm) node[below right] {\tiny{$+$}};
   	\draw (2cm,0) node[above left] {\tiny{$-$}};
   	\draw (2cm,0) node[below left] {\tiny{$+$}};
   	\draw (1cm,-1.73cm) node[left] {\tiny{$-$}};
   	\draw (1cm,-1.73cm) node[above right] {\tiny{$+$}};
   	\draw (-1cm,-1.73cm) node[above left] {\tiny{$-$}};
   	\draw (-1cm,-1.73cm) node[right] {\tiny{$+$}};
   	\draw (-1cm,0) node {\tiny{$1$}};
   	\draw (-0.5cm,0.87cm) node {\tiny{$2$}};
   	\draw (0.5cm,0.87cm) node {\tiny{$1$}};
   	\draw (1cm,0) node {\tiny{$2$}};
   	\draw (0.5cm,-0.87cm) node {\tiny{$1$}};
   	\draw (-0.5cm,-0.87cm) node {\tiny{$2$}};
   	\draw[dotted,->] (0,-0.8cm)-- ++(150:0.4cm)-- ++(240:0.1cm)-- ++(150:-0.5cm)-- ++(40:0.9cm);
   	\draw[dotted,->] (0,-1.2cm)-- ++(150:1.2cm)-- ++(90:0.6cm)-- ++(0:-0.1cm)-- ++(270:0.6cm);
   	   	
	\end{tikzpicture}
   	
\end{center} 

Let $\textbf{c}_1$ denote the upper and  $\textbf{c}_2$ the lower gallery of chambers. The gallery $\textbf{c}_1$ has one positive folding and one positive wall-crossing, the gallery $\textbf{c}_2$ has one positive folding and one negative wall-crossing. We obtain
$$ c(((E_0)_{V_1} \supset V_1 \subset (E_1)_{V_1}))=q(q-1)+(q-1)=(q+1)(q-1)=q^2-1.$$
Together with the first calculation we have
$$c(\delta_{T})=c(E_0)*c(((E_0)_{V_1} \supset V_1 \subset (E_1)_{V_1}))=1*(q^2-1)=q^2-1.$$
\textbf{Combinatorial Gaussent-Littelmann formula}\\
First consider the last column of the Young tableau $T$, namely ${\tiny C_1=  \Yvcentermath1 \young(3)}$. There is no simple reflection in $W$ that increases $C_1$. We obtain
$$ c(C_1)=q^0=1.$$
Now consider the 2-column Young tableau ${\tiny  \Yvcentermath1 \young(13,2)}$ in order to calculate $c(C_0,C_1)$. Applying the algorithm yields the following tree:\\
\begin{center}
\begin{tikzpicture}
 \draw (0,0) node  {$ \Yvcentermath1 \young(13,2)$};
 \draw[->] (1cm,0)-- ++(0:1cm);
 \draw (1.5cm,0.2cm) node {\tiny{$s_2^-$}};
 \draw (3cm,0) node {$ \Yvcentermath1 \young(12,3)$};
 \draw[->] (4cm,0)-- ++(0:1cm);
 \draw (4.5cm,0.2cm) node {\tiny{$id^+_1$}};
 \draw (6cm,0) node {$ \Yvcentermath1 \young(22,3)$};
 \draw[->] (1cm,-0.2cm)-- (2cm,-1.5cm);
 \draw (1.7cm,-0.7cm) node {\tiny{$id_2^+$}};
 \draw (3cm,-1.5cm) node {$ \Yvcentermath1 \young(13,3)$};
 \draw[->] (4cm,-1.5cm)-- ++(0:1cm); 
 \draw (6cm,-1.5cm) node {$\Yvcentermath1 \young(23,3)$};
 \draw (4.5cm,-1.3cm) node {\tiny{$s_1^+$}};
 
\end{tikzpicture}.
\end{center}
There are two simple paths in the tree that start in $\tiny{\Yvcentermath1 \young(13,2)}$ and end in a final vertex. Let $\sigma_1$ denote the upper and $\sigma_2$ the lower path. We have
$$ pr(\sigma_1)=0 \; \text{ and } \; pf(\sigma_1)=1$$
and 
$$ pr(\sigma_2)=1 \; \text{ and } \; pf(\sigma_2)=1.$$ 
It follows that 
$$ c(C_0,C_1)=(q-1)+q(q-1)=q^2-1$$
and $$ c(T)=1*(q^2-1)=q^2-1.$$
\textbf{Macdonald formula}\\
The Young tableau $T$ has shape $\lambda =2 \epsilon_1+\epsilon_2$ and content $\mu=0$. In order to calculate $\varphi_T(t)$ we need the augmented Young tableau $\hat{T}=$ {\tiny{$\Yvcentermath1 \young(13\infty,2\infty)$}}.\\
For $C_1= \tiny{\Yvcentermath1 \young(3)}$ we have $\varphi(C_1)(t)=(1-t)$, $\varphi_{m_2(\lambda)}(t)=(1-t)$, $\lambda_{(2)}=\epsilon_1$ and $\mu_{(2)}=\epsilon_3$ such that 
$$ c(C_1)=t^{-\left\langle \epsilon_1+\epsilon_3, \epsilon_1-\epsilon_3 \right\rangle}\frac{(1-t)}{(1-t)}=1.$$
For $C_0 = \tiny{\Yvcentermath1 \young(1,2)}$ we have $\varphi(C_0)(t)=(1-t)(1-t^2)$, $\varphi_{m_1(\lambda)}(t)=(1-t)$, $\lambda_{(1)}=\epsilon_1+\epsilon_2$ and $\mu_{(1)}=\epsilon_1+\epsilon_2$ such that
\begin{align*}
c(C_0,C_1)&=t^{-\left\langle 2\epsilon_1+2\epsilon_2, \epsilon_1-\epsilon_3 \right\rangle} \frac{(1-t)(1-t^2)}{(1-t)}\\
&=t^{-2}(1-t^2)=t^{-2}-1\\
&=q^2-1.
\end{align*}
We have $$ c(T)=1*(q^2-1)=q^2-1.$$
\subsection{Type $B_2$}\text{}\\
For the semistandard Young tableau $T=(C_0,C_1,C_2)=\tiny{\Yvcentermath1 \young(11{{\bar{1}}},{{\bar{2}}})}$ the corresponding positively folded combinatorial one-skeleton gallery in the standard apartment of the affine building of type $B_2$ is $\delta_T=\delta_{s_1s_2s_1(E_{\omega_1})}*\delta_{E_{\omega_1}}*\delta_{s_2(E_{\omega_2})}=(\mathfrak{o}\subset E_0 \supset V_1 \subset E_1 \supset V_2 \subset E_2 \supset V_3 )$:
	\begin{center}
	
   	\begin{tikzpicture}[x=0.4cm,y=0.4cm]
   
   \draw (0,0)-- (0:2cm);   
   \draw (0,0)-- (45:2.83cm); 
   \draw (0,0)-- (90:2cm);
   \draw (0,0)-- (135:2.83cm);
   \draw (0,0)-- (180:2cm);
   \draw (0,0)-- (225:2.83cm);
   \draw (0,0)-- (270:2cm);
   \draw (0,0)-- (315:2.83cm);
   \draw (0,2cm)-- ++(315:2.83cm)-- ++(225:2.83cm)-- ++(135:2.83cm)-- ++(45:2.83cm);
   \draw (-1cm,2cm)-- ++(270:4cm);
   \draw (-2cm,-1cm)-- ++(0:4cm);
   \draw (1cm,2cm)-- ++(270:4cm);
   \draw (-2cm,1cm)-- ++(0:4cm);
   \draw[dotted] (-2cm,2cm)-- ++(135:0.5cm);
   \draw[dotted] (-1cm,2cm)-- ++(90:0.5cm);
   \draw[dotted] (0,2cm)-- ++(135:0.5cm);
   \draw[dotted] (0,2cm)-- ++(45:0.5cm);
   \draw[dotted] (0,2cm)-- ++(90:0.5cm);
   \draw[dotted] (1cm,2cm)-- ++(90:0.5cm);
   \draw[dotted] (2cm,2cm)-- ++(45:0.5cm);
   \draw[dotted] (2cm,1cm)-- ++(0:0.5cm);
   \draw[dotted] (2cm,0)-- ++(45:0.5cm);
   \draw[dotted] (2cm,0)-- ++(0:0.5cm);
   \draw[dotted] (2cm,0)-- ++(315:0.5cm);
   \draw[dotted] (2cm,-1cm)-- ++(0:0.5cm);
   \draw[dotted] (2cm,-2cm)-- ++(315:0.5cm);
   \draw[dotted] (1cm,-2cm)-- ++(270:0.5cm);
   \draw[dotted] (0,-2cm)-- ++(315:0.5cm);
   \draw[dotted] (0,-2cm)-- ++(270:0.5cm);
   \draw[dotted] (0,-2cm)-- ++(225:0.5cm);
   \draw[dotted] (-1cm,-2cm)-- ++(270:0.5cm);
   \draw[dotted] (-2cm,-2cm)-- ++(225:0.5cm);
   \draw[dotted] (-2cm,-1cm)-- ++(180:0.5cm);
   \draw[dotted] (-2cm,0)-- ++(135:0.5cm);
   \draw[dotted] (-2cm,0)-- ++(180:0.5cm);
   \draw[dotted] (-2cm,0)-- ++(225:0.5cm);
   \draw[dotted] (-2cm,1cm)-- ++(180:0.5cm);

   \draw[->,red] (0,0)-- ++(270:1cm)--++(0:0.05cm)--++(90:1cm)-- ++(135:1.41cm);
   \coordinate (o) at (0,0);
   \coordinate (-o_1) at (0,-1cm);
   \fill[red](o) circle (1pt);
   \fill[red](-o_1) circle (1pt);
   
   \draw[left] (0,0) node {\tiny{$\mathfrak{o}$}};
   \draw[left] (0,2cm) node {\tiny{$\epsilon_1$}};
   \draw[above] (2cm,0) node {\tiny{$\epsilon_2$}};
   \fill (0,2cm) circle (1pt);
   \fill (2cm,0) circle (1pt);
 
   \end{tikzpicture}

  \end{center}
We now calculate the contribution of $T$ (resp. $\delta_{T}$) to the formulas:\\
\textbf{Gaussent-Littelmann formula}\\
Consider the gallery $(\mathfrak{o}\subset (E_0)_{\mathfrak{o}})$ associated to the last column of $T$ at the origin $\mathfrak{o}$, namely ${\tiny  \Yvcentermath1 \young({{\bar{1}}})}$, in the standard apartment of the residue building at $\mathfrak{o}$:
\begin{center}

	\begin{tikzpicture}[x=0.4cm,y=0.4cm]
   
   \draw[->] (0,0)-- (0:2cm);   
   \draw[->] (0,0)-- (45:2.83cm); 

   \draw[->] (0,0)-- (90:2cm);
   \draw[->] (0,0)-- (135:2.83cm);
   \draw[->] (0,0)-- (180:2cm);
   \draw[->] (0,0)-- (225:2.83cm);
   \draw[->] (0,0)-- (270:2cm);
   \draw[->] (0,0)-- (315:2.83cm);
   
   \fill[red] (0,0) circle (1pt);
   \draw[->,red] (0,0)-- ++(270:2cm);
   \draw[red, left] (0,0) node {\tiny{$\mathfrak{o}$}};
   \draw[red, right] (0,-1cm) node {\tiny{$(E_0)_{\mathfrak{o}}$}};
   \draw[right] (-1.6cm,-1.6cm) node {\small{$C_{\mathfrak{o}}^-=D_0$}};
   
  \end{tikzpicture} 

\end{center}
Thus, $w_{D_0}=id$ and 
$$c(E_0)=q^0=1.$$
Now consider the gallery $((E_0)_{V_1} \supset V_1 \subset (E_1)_{V_1})$ associated to the 2-column Young tableau ${\tiny \Yvcentermath1 \young(1{{\bar{1}}})}$ at $V_1$, in the standard apartment of the residue building at $V_1$:   
\begin{center}
	
   	\begin{tikzpicture}[x=0.4cm,y=0.4cm]
    
    \draw[->] (0,0)-- ++(0:2cm);
    \draw (1cm,0) node {\tiny{$1_0$}};
    \draw[->] (0,0)-- ++(90:2cm);
    \draw (0,1cm) node {\tiny{$2$}}; 
    \draw[->] (0,0)-- ++(180:2cm);
    \draw (-1cm,0) node {\tiny{$1_0$}};
    \draw[->] (0,0)-- ++(270:2cm);
    \draw (0,-1cm) node {\tiny{$2$}};

    \draw[red,->] (0,2cm)-- ++(270:2cm)-- ++(0:0.05cm)-- ++(90:2cm);
    
    \fill[red] (0,0) circle (1pt); 
    \draw[red, above right] (0,0) node {\tiny{$V_1$}};
    
    \draw (-1cm,-1cm) node {\small{$C_{V_1}^-$}};
    \draw (-1cm,1cm) node {\small{$S_{V_1}^1=D_1$}};
    
    \draw[above right] (-2cm,0) node {\tiny{$-$}};
    \draw[below right] (-2cm,0) node {\tiny{$+$}};
    \draw[above left] (2cm,0) node {\tiny{$-$}};
    \draw[below left] (2cm,0) node {\tiny{$+$}};
    \draw[above right] (0,-2cm) node {\tiny{$+$}};
    \draw[above left] (0,-2cm) node {\tiny{$-$}};
    \draw[below right] (0,2cm) node {\tiny{$+$}};
    \draw[below left] (0,2cm) node {\tiny{$-$}};
    \draw[red,left] (0,1.5cm) node {\tiny{$(E_0)_{V_1}$}};
    \draw[red,right] (0,1.5cm) node {\tiny{$(E_1)_{V_1}$}};
    
    \draw[dotted,->] (-1cm,-0.8cm)--(-1cm,0.8cm);

    \end{tikzpicture}

\end{center}   

The labeling of the walls and the signs at the walls are as explained in the example for type $A_2$.\\
The reduced expression for the Weyl group element $w \in W_{V_1}^a=W_{V_{\omega_1}}^a$ that sends $C_{V_1}^-$ to $D_1$ is $s_{1_0}$ indicated in the picture above by the dotted line. There is only one gallery of chambers $\textbf{c}_1$ in $\Gamma_{S_{V_1}^1}^+((1_0),op)$ which is illustrated below:
\begin{center}
	
   	\begin{tikzpicture}[x=0.4cm,y=0.4cm]
    
    \draw[->] (0,0)-- ++(0:2cm);
    \draw (1cm,0) node {\tiny{$1_0$}};
    \draw[->] (0,0)-- ++(90:2cm);
    \draw (0,1cm) node {\tiny{$2$}}; 
    \draw[->] (0,0)-- ++(180:2cm);
    \draw (-1cm,0) node {\tiny{$1_0$}};
    \draw[->] (0,0)-- ++(270:2cm);
    \draw (0,-1cm) node {\tiny{$2$}};

    \draw[red,->] (0,2cm)-- ++(270:2cm)-- ++(0:0.05cm)-- ++(90:2cm);
    
    \fill[red] (0,0) circle (1pt); 
    
    \draw (-1cm,-1cm) node {\small{$C_{V_1}^-$}};
    \draw (-1cm,1cm) node {\small{$S_{V_1}^1=D_1$}};
    
    \draw[red, above right] (0,0) node {\tiny{$V_1$}};
    
    \draw[above right] (-2cm,0) node {\tiny{$-$}};
    \draw[below right] (-2cm,0) node {\tiny{$+$}};
    \draw[above left] (2cm,0) node {\tiny{$-$}};
    \draw[below left] (2cm,0) node {\tiny{$+$}};
    \draw[above right] (0,-2cm) node {\tiny{$+$}};
    \draw[above left] (0,-2cm) node {\tiny{$-$}};
    \draw[below right] (0,2cm) node {\tiny{$+$}};
    \draw[below left] (0,2cm) node {\tiny{$-$}};
    \draw[red,left] (0,1.5cm) node {\tiny{$(E_0)_{V_1}$}};
    \draw[red,right] (0,1.5cm) node {\tiny{$(E_1)_{V_1}$}};
    
    \draw[dotted,->] (-1cm,-0.8cm)--(-1cm,0)-- ++(0:0.05cm)-- ++(270:0.8cm);

    \end{tikzpicture}

\end{center}  
The gallery of chambers $\textbf{c}_1$ has one positive folding. We obtain 
$$ c(((E_0)_{V_1} \supset V_1 \subset (E_1)_{V_1}))=(q-1).$$
Now consider the gallery $((E_1)_{V_2} \supset V_2 \subset (E_2)_{V_2})$  associated to the 2-column Young tableau $T= \tiny{\Yvcentermath1 \young(11,{{\bar{2}}})}$ at $V_2$ in the standard apartment of the residue building at $V_2$:

\begin{center}

	\begin{tikzpicture}[x=0.4cm,y=0.4cm]
   
   \draw[->] (0,0)-- (0:2cm);   
   \draw[->] (0,0)-- (45:2.83cm); 
   \draw[->] (0,0)-- (90:2cm);
   \draw[->] (0,0)-- (135:2.83cm);
   \draw[->] (0,0)-- (180:2cm);
   \draw[->] (0,0)-- (225:2.83cm);
   \draw[->] (0,0)-- (270:2cm);
   \draw[->] (0,0)-- (315:2.83cm);
   
   \fill[red] (0,0) circle (1pt);
   \draw[->,red] (0,-2cm)--(0,0)-- ++(135:2.82cm);
   \draw[red, above right] (0,0) node {\tiny{$V_2$}};
   \draw[red, right] (0,-1cm) node {\tiny{$(E_1)_{V_2}$}};
   \draw[red, right] (-1cm,1cm) node {\tiny{$(E_2)_{V_2}$}};
   \draw[below] (-0.8cm,-1.3cm) node {\small{$C_{V_2}^-=S^2_{V_2}$}};
   \draw (-1.5cm,0.5cm) node {\small{$D_2$}};
   
   \draw (0,2cm) node[below left] {\tiny{$-$}};
   \draw (0,2cm) node[below right] {\tiny{$+$}};
   \draw (2cm,2cm) node[left] {\tiny{$+$}};
   \draw (2cm,2cm) node[below] {\tiny{$-$}};
   \draw (2cm,0) node[above left] {\tiny{$+$}};
   \draw (2cm,0) node[below left] {\tiny{$-$}};
   \draw (2cm,-2cm) node[above] {\tiny{$+$}};
   \draw (2cm,-2cm) node[left] {\tiny{$-$}};
   \draw (0,-2cm) node[above left] {\tiny{$-$}};
   \draw (0,-2cm) node[above right] {\tiny{$+$}};
   \draw (-2cm,-2cm) node[above] {\tiny{$+$}};
   \draw (-2cm,-2cm) node[right] {\tiny{$-$}};
   \draw (-2cm,0) node[above right] {\tiny{$+$}};
   \draw (-2cm,0) node[below right] {\tiny{$-$}};
   \draw (-2cm,2cm) node[below] {\tiny{$-$}};
   \draw (-2cm,2cm) node[right] {\tiny{$+$}};

   \draw (0,-1cm) node {\tiny{$2$}};
   \draw (1cm,-1cm) node {\tiny{$1$}};
   \draw (1cm,0) node {\tiny{$2$}};
   \draw (1cm,1cm) node {\tiny{$1$}};
   \draw (0,1cm) node {\tiny{$2$}};
   \draw (-1cm,1cm) node {\tiny{$1$}};
   \draw (-1cm,0) node {\tiny{$2$}};
   \draw (-1cm,-1cm) node {\tiny{$1$}};
   
   \draw[dotted, ->] (-0.4cm,-1cm)--++ (135:0.8cm)-- ++(90:1cm);
   
  \end{tikzpicture} 

\end{center}
The reduced expression for the Weyl group element $w \in W_{V_2}^a$ that sends $C_{V_2}^-$ to $D_2$ is $s_1s_2$ indicated above by the dotted line. The gallery of chambers of type $(1,2)$ starting in $C_{V_2}^-$ that goes straight to $D_2$ (i.e. has only wall-crossings)  has only positive wall-crossings with respect to $S_{V_2}^2$. Consequently this is the only gallery of chambers in $\Gamma_{S^2_{V_2}}^+((1,2),op)$. We obtain
$$ c(((E_1)_{V_2} \supset V_2 \subset (E_2)_{V_2}))=q^2.$$
Together with the previous results we have
$$ c(\delta_T)=1*(q-1)*q^2.$$
\textbf{Combinatorial Gaussent-Littelmann formula} \text{}\\
First consider the last column of the Young tableau $T$, namely $C_2= \tiny{\Yvcentermath1 \young({{\bar{1}}})}$. There is no simple reflection in $W$ that increases $C_2$. We obtain
$$ c(C_2)=q^0=1.$$
Now consider the 2-column Young tableau $(C_1,C_2)=\tiny{\Yvcentermath1 \young(1{{\bar{1}}})}$ at vertex $V$. Since $i=1$ is smaller than $n=2$ and $r-i=2-1=1$ is odd and since $C_1$ and $C_2$ both consist of one box the 2-column Young tableau $\tiny{\Yvcentermath1 \young(1{{\bar{1}}})}$ is at a vertex $V$ of the same type as $V_{\omega_1}$. In fact, the Weyl group $W_V^v$ equals $W_{V_{\omega_1}}^v$. Applying the algorithm yields the following tree:

\begin{center}
    
    \begin{tikzpicture}[x=0.4cm,y=0.4cm]
    
    \draw (0,0) node  {$ \Yvcentermath1 \young(1{{\bar{1}}})$};
    \draw[->] (1cm,0)-- ++(0:1cm);
    \draw (1.5cm,0.2cm) node {\tiny{$id_{1_0}^+$}};
    \draw (3cm,0) node {$ \Yvcentermath1 \young({{\bar{1}}}{{\bar{1}}})$};

   \end{tikzpicture}.
   
\end{center}
It follows that 
$$ c(C_1,C_2)=(q-1).$$
It remains to calcuate $c(C_0,C_1)$. Consider the 2-column Young tableau $(C_0,C_1)=\tiny{\Yvcentermath1 \young(11,{{\bar{2}}})}$. Since the two columns consist of an unequal number of boxes it is at a special vertex. Applying the algorithm yields
\begin{center}

    \begin{tikzpicture}[x=0.4cm,y=0.4cm]
    
    \draw (0,0) node  {$ \Yvcentermath1 \young(11,{{\bar{2}}})$};
    \draw[->] (1cm,0)-- ++(0:1cm);
    \draw (1.5cm,0.2cm) node {\tiny{$s_1^+$}};
    \draw (3cm,0) node {$ \Yvcentermath1 \young(22,{{\bar{1}}})$};
     \draw[->] (4cm,0)-- ++(0:1cm);
    \draw (4.5cm,0.2cm) node {\tiny{$s_2^+$}};
    \draw (6cm,0) node {$ \Yvcentermath1 \young({{\bar{2}}}{{\bar{2}}},{{\bar{1}}})$};

   \end{tikzpicture}.

\end{center}
It follows
$$ c(C_0,C_1)=q^2$$
and in the whole
$$ c(T)=1*(q-1)*q^2.$$

\subsection{Type $C_2$}\text{}\\
For the semistandard Young tableau $T=(C_0,C_1,C_2)=\tiny{\Yvcentermath1 \young(12{{\bar{2}}},{{\bar{2}}}{{\bar{1}}})}$ the corresponding positively folded combinatorial one-skeleton gallery in the standard apartment of the affine building of type $C_2$ is $\delta_T=\delta_{s_2s_1(E_{\omega_1})}*\delta_{s_1s_2(E_{\omega_2})}*\delta_{s_2(E_{\omega_2})}=(\mathfrak{o}\subset E_0 \supset V_1 \subset E_1 \supset V_2 \subset E_2 \supset V_3 )$:
	\begin{center}
	
   	\begin{tikzpicture}[x=0.4cm,y=0.4cm]
   
   \draw (0,0)-- (0:2cm);   
   \draw (0,0)-- (45:2.83cm); 
   \draw (0,0)-- (90:2cm);
   \draw (0,0)-- (135:2.83cm);
   \draw (0,0)-- (180:2cm);
   \draw (0,0)-- (225:2.83cm);
   \draw (0,0)-- (270:2cm);
   \draw (0,0)-- (315:2.83cm);
   \draw (0,2cm)-- ++(315:2.83cm)-- ++(225:2.83cm)-- ++(135:2.83cm)-- ++(45:2.83cm);7
   \draw (-1cm,2cm)-- ++(270:4cm);
   \draw (-2cm,-1cm)-- ++(0:4cm);
   \draw (1cm,2cm)-- ++(270:4cm);
   \draw (-2cm,1cm)-- ++(0:4cm);
   \draw[dotted] (-2cm,2cm)-- ++(135:0.5cm);
   \draw[dotted] (-1cm,2cm)-- ++(90:0.5cm);
   \draw[dotted] (0,2cm)-- ++(135:0.5cm);
   \draw[dotted] (0,2cm)-- ++(45:0.5cm);
   \draw[dotted] (0,2cm)-- ++(90:0.5cm);
   \draw[dotted] (1cm,2cm)-- ++(90:0.5cm);
   \draw[dotted] (2cm,2cm)-- ++(45:0.5cm);
   \draw[dotted] (2cm,1cm)-- ++(0:0.5cm);
   \draw[dotted] (2cm,0)-- ++(45:0.5cm);
   \draw[dotted] (2cm,0)-- ++(0:0.5cm);
   \draw[dotted] (2cm,0)-- ++(315:0.5cm);
   \draw[dotted] (2cm,-1cm)-- ++(0:0.5cm);
   \draw[dotted] (2cm,-2cm)-- ++(315:0.5cm);
   \draw[dotted] (1cm,-2cm)-- ++(270:0.5cm);
   \draw[dotted] (0,-2cm)-- ++(315:0.5cm);
   \draw[dotted] (0,-2cm)-- ++(270:0.5cm);
   \draw[dotted] (0,-2cm)-- ++(225:0.5cm);
   \draw[dotted] (-1cm,-2cm)-- ++(270:0.5cm);
   \draw[dotted] (-2cm,-2cm)-- ++(225:0.5cm);
   \draw[dotted] (-2cm,-1cm)-- ++(180:0.5cm);
   \draw[dotted] (-2cm,0)-- ++(135:0.5cm);
   \draw[dotted] (-2cm,0)-- ++(180:0.5cm);
   \draw[dotted] (-2cm,0)-- ++(225:0.5cm);
   \draw[dotted] (-2cm,1cm)-- ++(180:0.5cm);

   \draw[->,red] (0,0)-- ++(315:1.41cm)--++(180:1cm)--++(270:0.05cm)-- ++(0:1cm);
   \coordinate (o) at (0,0);
   \fill[red](o) circle (1pt);
   \fill[red] (1cm,-1cm) circle (1pt);
   \fill[red] (0,-1cm) circle (1pt);
   
   \draw[left] (0,0) node {\tiny{$\mathfrak{o}$}};
   \draw[left] (1cm,1cm) node {\tiny{$\epsilon_1$}};
   \draw[above] (-1cm,1cm) node {\tiny{$\epsilon_2$}};
   \fill (1cm,1cm) circle (1pt);
   \fill (-1cm,1cm) circle (1pt);
 
   \end{tikzpicture}

  \end{center}

We now calculate the contribution of $T$ (resp. $\delta_T$) to the formulas:\\
\textbf{Gaussent-Littelmann formula}\\
Consider the gallery $(\mathfrak{o}\subset (E_0)_{\mathfrak{o}})$ associated to the last column of $T$ at the origin $\mathfrak{o}$, namely ${\tiny  \Yvcentermath1 \young({{\bar{2}}})}$, in the standard apartment of the residue building at $\mathfrak{o}$:
\begin{center}

	\begin{tikzpicture}[x=0.4cm,y=0.4cm]
   
   \draw[->] (0,0)-- (0:2cm);   
   \draw[->] (0,0)-- (45:2.83cm); 
   \draw[->] (0,0)-- (90:2cm);
   \draw[->] (0,0)-- (135:2.83cm);
   \draw[->] (0,0)-- (180:2cm);
   \draw[->] (0,0)-- (225:2.83cm);
   \draw[->] (0,0)-- (270:2cm);
   \draw[->] (0,0)-- (315:2.83cm);
   
   \fill[red] (0,0) circle (1pt);
   \draw[->,red] (0,0)--(2cm,-2cm);
   \draw[red, left] (0,0) node {\tiny{$\mathfrak{o}$}};
   \draw[red] (1cm,-0.5cm) node {\tiny{$(E_0)_{\mathfrak{o}}$}};
   \draw[right] (-1cm,-1.6cm) node {\small{$C_{\mathfrak{o}}^-$}};
   \draw[left] (1cm,-1.6cm) node {\small{$D_0$}};
   
   \draw (0,-1cm) node {\tiny{$1$}};
   \draw (1cm,-1cm) node {\tiny{$2$}};
   \draw (1cm,0) node {\tiny{$1$}};
   \draw (1cm,1cm) node {\tiny{$2$}};
   \draw (0,1cm) node {\tiny{$1$}};
   \draw (-1cm,1cm) node {\tiny{$2$}};
   \draw (-1cm,0) node {\tiny{$1$}};
   \draw (-1cm,-1cm) node {\tiny{$2$}};
   
   \draw[dotted, ->]  (-0.5cm,-1cm)--(0.5cm,-1cm);

  \end{tikzpicture} 

\end{center}
The labeling of the walls is explained in the example of type $A_2$.\\
The Weyl group element $w \in W$ that sends $C_{\mathfrak{o}}^-$ to $D_0$ is $s_1$ indicated above by the dotted line. It follows that
$$c(E_0)=q^1=q.$$ 
Now consider the gallery $((E_0)_{V_1} \supset V_1 \subset (E_1)_{V_1})$ associated to the 2-column Young tableau ${\tiny \Yvcentermath1 \young(2{{\bar{2}}},{{\bar{1}}})}$ at $V_1$, in the standard apartment of the residue building at $V_1$:   
   
\begin{center}

	\begin{tikzpicture}[x=0.4cm,y=0.4cm]
   
   \draw[->] (0,0)-- (0:2cm);   
   \draw[->] (0,0)-- (45:2.83cm); 
   \draw[->] (0,0)-- (90:2cm);
   \draw[->] (0,0)-- (135:2.83cm);
   \draw[->] (0,0)-- (180:2cm);
   \draw[->] (0,0)-- (225:2.83cm);
   \draw[->] (0,0)-- (270:2cm);
   \draw[->] (0,0)-- (315:2.83cm);
   
   \fill[red] (0,0) circle (1pt);
   \draw[->,red] (-2cm,2cm)--(0,0)--(-2cm,0);
   \draw[red, right] (0,0) node {\tiny{$V_1$}};
   \draw[red,] (-0.5cm,1cm) node {\tiny{$(E_0)_{V_1}$}};
   \draw[red, below] (-0.5cm,0) node {\tiny{$(E_1)_{V_1}$}};
   
   \draw[right] (-1cm,-1.6cm) node {\small{$C_{V_1}^-$}};
   \draw[left] (-1.3cm,-0.6cm) node {\small{$D_1$}};
   \draw[left] (-1.3cm,+0.6cm) node {\small{$S^1_{V_1}$}};
   
   \draw (0,-1cm) node {\tiny{$1$}};
   \draw (1cm,-1cm) node {\tiny{$2$}};
   \draw (1cm,0) node {\tiny{$1$}};
   \draw (1cm,1cm) node {\tiny{$2$}};
   \draw (0,1cm) node {\tiny{$1$}};
   \draw (-1cm,1cm) node {\tiny{$2$}};
   \draw (-1cm,0) node {\tiny{$1$}};
   \draw (-1cm,-1cm) node {\tiny{$2$}};
   
   \draw[dotted, ->]  (-0.5cm,-1cm)-- ++(135:0.8cm);
   
   \draw (0,2cm) node[below left] {\tiny{$-$}};
   \draw (0,2cm) node[below right] {\tiny{$+$}};
   \draw (2cm,2cm) node[left] {\tiny{$-$}};
   \draw (2cm,2cm) node[below] {\tiny{$+$}};
   \draw (2cm,0) node[above left] {\tiny{$-$}};
   \draw (2cm,0) node[below left] {\tiny{$+$}};
   \draw (2cm,-2cm) node[above] {\tiny{$+$}};
   \draw (2cm,-2cm) node[left] {\tiny{$-$}};
   \draw (0,-2cm) node[above left] {\tiny{$-$}};
   \draw (0,-2cm) node[above right] {\tiny{$+$}};
   \draw (-2cm,-2cm) node[above] {\tiny{$-$}};
   \draw (-2cm,-2cm) node[right] {\tiny{$+$}};
   \draw (-2cm,0) node[above right] {\tiny{$-$}};
   \draw (-2cm,0) node[below right] {\tiny{$+$}};
   \draw (-2cm,2cm) node[below] {\tiny{$-$}};
   \draw (-2cm,2cm) node[right] {\tiny{$+$}};

  \end{tikzpicture} 

\end{center}
The labeling of the walls and the signs at the walls are as explained in the example of type $A_2$.\\
The Weyl group element $s_2$ sends the chamber $C_{V_1}^-$ to $D_1$ (indicated by the dotted line). There is a single gallery of chambers $\textbf{c}_1$ in the set $\Gamma_{S^1_{V_1}}^+((2),op)$:
\begin{center}

	\begin{tikzpicture}[x=0.4cm,y=0.4cm]
   
   \draw[->] (0,0)-- (0:2cm);   
   \draw[->] (0,0)-- (45:2.83cm); 
   \draw[->] (0,0)-- (90:2cm);
   \draw[->] (0,0)-- (135:2.83cm);
   \draw[->] (0,0)-- (180:2cm);
   \draw[->] (0,0)-- (225:2.83cm);
   \draw[->] (0,0)-- (270:2cm);
   \draw[->] (0,0)-- (315:2.83cm);
   
   \fill[red] (0,0) circle (1pt);
   \draw[->,red] (-2cm,2cm)--(0,0)--(-2cm,0);
   \draw[red, right] (0,0) node {\tiny{$V_1$}};
   \draw[red,] (-0.5cm,1cm) node {\tiny{$(E_0)_{V_1}$}};
   \draw[red, below] (-0.5cm,0) node {\tiny{$(E_1)_{V_1}$}};
   
   \draw[right] (-1cm,-1.6cm) node {\small{$C_{V_1}^-$}};
   \draw[left] (-1.3cm,-0.6cm) node {\small{$D_1$}};
   \draw[left] (-1.3cm,+0.6cm) node {\small{$S^1_{V_1}$}};
   
   \draw (0,-1cm) node {\tiny{$1$}};
   \draw (1cm,-1cm) node {\tiny{$2$}};
   \draw (1cm,0) node {\tiny{$1$}};
   \draw (1cm,1cm) node {\tiny{$2$}};
   \draw (0,1cm) node {\tiny{$1$}};
   \draw (-1cm,1cm) node {\tiny{$2$}};
   \draw (-1cm,0) node {\tiny{$1$}};
   \draw (-1cm,-1cm) node {\tiny{$2$}};
   
   \draw[dotted, ->]  (-0.3cm,-1cm)-- ++(135:0.5cm)-- ++ (225:0.05cm)- ++(315:0.5cm);
   
   \draw (0,2cm) node[below left] {\tiny{$-$}};
   \draw (0,2cm) node[below right] {\tiny{$+$}};
   \draw (2cm,2cm) node[left] {\tiny{$-$}};
   \draw (2cm,2cm) node[below] {\tiny{$+$}};
   \draw (2cm,0) node[above left] {\tiny{$-$}};
   \draw (2cm,0) node[below left] {\tiny{$+$}};
   \draw (2cm,-2cm) node[above] {\tiny{$+$}};
   \draw (2cm,-2cm) node[left] {\tiny{$-$}};
   \draw (0,-2cm) node[above left] {\tiny{$-$}};
   \draw (0,-2cm) node[above right] {\tiny{$+$}};
   \draw (-2cm,-2cm) node[above] {\tiny{$-$}};
   \draw (-2cm,-2cm) node[right] {\tiny{$+$}};
   \draw (-2cm,0) node[above right] {\tiny{$-$}};
   \draw (-2cm,0) node[below right] {\tiny{$+$}};
   \draw (-2cm,2cm) node[below] {\tiny{$-$}};
   \draw (-2cm,2cm) node[right] {\tiny{$+$}};

  \end{tikzpicture} 

\end{center}
Since $\textbf{c}_1$ has one positive folding we obtain
$$ c(((E_0)_{V_1} \supset V_1 \subset (E_1)_{V_1}))=(q-1).$$
Now consider the gallery $((E_1)_{V_2} \supset V_2 \subset (E_2)_{V_2})$  associated to the 2-column Young tableau $T= \tiny{\Yvcentermath1 \young(12,{{\bar{2}}}{{\bar{1}}})}$ at $V_2$ in the standard apartment of the residue building at $V_2$:
\begin{center}
	
   	\begin{tikzpicture}[x=0.4cm,y=0.4cm]
    
    \draw[->] (0,0)-- ++(0:2cm);
    \draw (1cm,0) node {\tiny{$2_0$}};
    \draw[->] (0,0)-- ++(90:2cm);
    \draw (0,1cm) node {\tiny{$1$}}; 
    \draw[->] (0,0)-- ++(180:2cm);
    \draw (-1cm,0) node {\tiny{$2_0$}};
    \draw[->] (0,0)-- ++(270:2cm);
    \draw (0,-1cm) node {\tiny{$1$}};

    \draw[red,->] (2cm,0)--(0,0)-- ++(90:0.05cm)-- ++(0:2cm);
    
    \fill[red] (0,0) circle (1pt); 
    \draw[red, above right] (0,0) node {\tiny{$V_2$}};
    
    \draw (-1cm,-1cm) node {\small{$C_{V_2}^-$}};
    \draw (1cm,-1cm) node {\small{$S_{V_2}^1=D_2$}};
    
    \draw[above right] (-2cm,0) node {\tiny{$+$}};
    \draw[below right] (-2cm,0) node {\tiny{$-$}};
    \draw[above left] (2cm,0) node {\tiny{$+$}};
    \draw[below left] (2cm,0) node {\tiny{$-$}};
    \draw[above right] (0,-2cm) node {\tiny{$-$}};
    \draw[above left] (0,-2cm) node {\tiny{$+$}};
    \draw[below right] (0,2cm) node {\tiny{$-$}};
    \draw[below left] (0,2cm) node {\tiny{$+$}};
    \draw[red,above] (1.5cm,0) node {\tiny{$(E_2)_{V_2}$}};
    \draw[red,below] (1.5cm,0) node {\tiny{$(E_1)_{V_2}$}};
    
    \draw[dotted,->] (-0.8cm,-0.8cm)--(0.8cm,-0.8cm);

    \end{tikzpicture}

\end{center}   

The labeling of the walls and the signs at the walls are as explained in the example for type $A_2$.\\
The reduced expression for the Weyl group element $w \in W_{V_2}^a$ that sends $C_{V_2}^-$ to $D_2$ is $s_{1}$ indicated in the picture above by the dotted line. There is only one gallery of chambers $\textbf{c}_1$ in $\Gamma_{S_{V_2}^2}^+((1),op)$ which is illustrated below:
\begin{center}
	
   	\begin{tikzpicture}[x=0.4cm,y=0.4cm]
    
    \draw[->] (0,0)-- ++(0:2cm);
    \draw (1cm,0) node {\tiny{$2_0$}};
    \draw[->] (0,0)-- ++(90:2cm);
    \draw (0,1cm) node {\tiny{$1$}}; 
    \draw[->] (0,0)-- ++(180:2cm);
    \draw (-1cm,0) node {\tiny{$2_0$}};
    \draw[->] (0,0)-- ++(270:2cm);
    \draw (0,-1cm) node {\tiny{$1$}};

    \draw[red,->] (2cm,0)--(0,0)-- ++(90:0.05cm)-- ++(0:2cm);
    
    \fill[red] (0,0) circle (1pt); 
    \draw[red, above right] (0,0) node {\tiny{$V_2$}};
    
    \draw (-1cm,-1cm) node {\small{$C_{V_2}^-$}};
    \draw (1cm,-1cm) node {\small{$S_{V_2}^1=D_2$}};
    
    \draw[above right] (-2cm,0) node {\tiny{$+$}};
    \draw[below right] (-2cm,0) node {\tiny{$-$}};
    \draw[above left] (2cm,0) node {\tiny{$+$}};
    \draw[below left] (2cm,0) node {\tiny{$-$}};
    \draw[above right] (0,-2cm) node {\tiny{$-$}};
    \draw[above left] (0,-2cm) node {\tiny{$+$}};
    \draw[below right] (0,2cm) node {\tiny{$-$}};
    \draw[below left] (0,2cm) node {\tiny{$+$}};
    \draw[red,above] (1.5cm,0) node {\tiny{$(E_2)_{V_2}$}};
    \draw[red,below] (1.5cm,0) node {\tiny{$(E_1)_{V_2}$}};
    
    \draw[dotted,->] (-0.6cm,-0.8cm)--(0,-0.8cm)-- ++ (270:0.05cm)-- ++(180:0.6cm) ;

    \end{tikzpicture}
\end{center}
The gallery of chambers $\textbf{c}_1$ has one positive folding and we obtain:
$$c(((E_1)_{V_2} \supset V_2 \subset (E_2)_{V_2}))=(q-1)$$
and
$$ c(T)= q*(q-1)^2 .$$
\textbf{Combinatorial Gaussent-Littelmann formula}\\
First consider the last column of $T$, namely $C_2= \tiny{\Yvcentermath1 \young({{\bar{2}}})}$. Applying $s_1 \in W$ to $C_2$ increases it and there is no simple reflection that increases $s_1(\tiny{\Yvcentermath1 \young({{\bar{2}}})})=\tiny{\Yvcentermath1 \young({{\bar{1}}})}$ further. We obtain
$$ c(C_2)=q^1=q.$$
Now consider the 2-column Young tableau $(C_1,C_2)=\tiny{\Yvcentermath1 \young(2{{\bar{2}}},{{\bar{1}}})}$. Since the two columns consist of an unequal number of boxes the tableau is at a special vertex. Applying the algorithm yields
\begin{center}
    
    \begin{tikzpicture}[x=0.4cm,y=0.4cm]
    
    \draw (0,0) node  {$ \Yvcentermath1 \young(1{{\bar{2}}},{{\bar{2}}})$};
    \draw[->] (1cm,0)-- ++(0:1cm);
    \draw (1.5cm,0.2cm) node {\tiny{$id_{2}^+$}};
    \draw (3cm,0) node {$ \Yvcentermath1 \young({{\bar{2}}}{{\bar{2}}},{{\bar{1}}})$};

   \end{tikzpicture}.
   
\end{center}
It follows that 
$$ c(C_1,C_2)=(q-1).$$
It remains to calcuate $c(C_0,C_1)$. Consider the 2-column Young tableau $(C_0,C_1)=\tiny{\Yvcentermath1 \young(12,{{\bar{2}}}{{\bar{1}}})}$ at the vertex $V$. The Weyl group $W_V^v$ equals $W_{V_{\omega_2}}^v$. Applying the algorithm yields
\begin{center}
    
    \begin{tikzpicture}[x=0.4cm,y=0.4cm]
    
    \draw (0,0) node  {$ \Yvcentermath1 \young(12,{{\bar{2}}}{{\bar{1}}})$};
    \draw[->] (1cm,0)-- ++(0:1cm);
    \draw (1.5cm,0.2cm) node {\tiny{$id_{1}^+$}};
    \draw (3cm,0) node {$ \Yvcentermath1 \young(22,{{\bar{1}}}{{\bar{1}}})$};

   \end{tikzpicture}.
   
\end{center}
It follows
$$c(C_0,C_1)=(q-1)$$
and 
$$ c(T)= q*(q-1)^2.$$

\end{document}